\newenvironment{proof}{\noindent {\it Proof.~~}\ }{\  \rule{1mm}{2mm}\medskip}
\newenvironment{proofof}[2]{\noindent {\it Proof of #1}~#2: \
}{~\rule{1mm}{2mm}\medskip}
\newtheorem{theorem}{Theorem}
\newtheorem{lemma}[theorem]{Lemma}
\newtheorem{corollary}[theorem]{Corollary}
\newtheorem{proposition}[theorem]{Proposition}
\def\zp{{\mathbb Z}/p{\mathbb Z}}
\newcommand{\subgp}[1]{\langle{#1}\rangle}
\begin{document}
\title{ Some additive applications of the isoperimetric approach}

\author{ Yahya O. Hamidoune\thanks{UPMC Paris06,     {\tt hamidoune@math.jussieu.fr} }
}

\maketitle

\begin{abstract}
Let $G$ be a group and let $X$ be a finite subset.
 The isoperimetric method investigates  the objective function
$|(XB)\setminus X|$, defined on the subsets $X$ with $|X|\ge k$ and
$|G\setminus (XB)|\ge k$.

In this paper we  present all the basic facts about the isoperimetric
method. We improve some of our previous results and obtain
generalizations and short proofs for several known results. We also
give some new applications.

Some of the results obtained here will be used in  coming papers
\cite{hkneser,hkemp,hkempp} to improve Kempermann structure Theory.

\end{abstract}

\section{Introduction}

The starting point of set product estimation is the inequality
$|AB|\ge \min(|G|,|A|+|B|-1),$ where $A,B$ are nonempty subsets of a group
with a prime order, proved by Cauchy \cite{cauchy} and rediscovered
by Davenport \cite{davenport}. Some of the generalizations of this  result
are due to Chowla \cite{chowla}, Shepherdson
\cite{sheph}, Mann \cite{manams} and Kemperman \cite{kempcompl}.

Kneser's generalization of the Cauchy-Davenport Theorem is a basic
tool in Additive Number Theory:

\begin{theorem}[Kneser \cite{knesrdensite} ]\label{kneserth}
Let $G$ be an abelian group and let $A, B\subset G$ be finite
nonempty subsets such that $AB$ is aperiodic. Then $|AB|\ge |A|+|B|-1$.
\end{theorem}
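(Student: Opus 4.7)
The plan is to prove Kneser via the isoperimetric minimisation announced in the abstract. After translating $B$ so that $1_G\in B$ we have $A\subseteq AB$, hence
\[
|AB|-|A|\;=\;\partial_B(A),\qquad \partial_B(X):=|XB\setminus X|.
\]
Aperiodicity of $AB$ rules out $AB=G$ (for $|G|>1$; otherwise the statement is trivial), so $A$ is admissible for the minimisation problem at $k=1$. It therefore suffices to prove that $\partial_B(X)\ge|B|-1$ for every admissible $X$.

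I would pick a minimiser $X_0$ of $\partial_B$ of smallest possible cardinality, a \emph{$1$-atom} of $B$. The workhorse is the submodular inequality
\[
\partial_B(X\cap Y)+\partial_B(X\cup Y)\;\le\;\partial_B(X)+\partial_B(Y),
\]
valid whenever $X\cap Y$ and $X\cup Y$ are both admissible, and coming from $(X\cup Y)B=XB\cup YB$, $(X\cap Y)B\subseteq XB\cap YB$, and $|X\cap Y|+|X\cup Y|=|X|+|Y|$. Applied to $X_0$ against a judicious translate $X_0+g$, this inequality combined with the minimality of $|X_0|$ forces the stabiliser $H=\{h:h+X_0B=X_0B\}$ to be a nontrivial subgroup.

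With $X_0B$ now $H$-periodic I would descend to $\bar G:=G/H$, where the pair $(\bar X_0,\bar B)$ lives in a strictly simpler setting, and conclude $\partial_B(X_0)\ge|B|-1$ by induction on a suitable invariant such as $|B|$ or $|G|$. The base case $|B|=1$ is immediate from $|AB|=|A|$. Chaining $\partial_B(A)\ge\partial_B(X_0)\ge|B|-1$ yields $|AB|\ge|A|+|B|-1$.

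The main obstacle, and the place where this paper's machinery is genuinely needed, is the descent step: showing that a minimum-size $1$-atom $X_0$ forces $X_0B$ to have nontrivial stabiliser. One must produce a translate $X_0+g$ so that $X_0\cap(X_0+g)$ is nonempty, still satisfies $(X_0\cap(X_0+g))B\ne G$, and — via the submodularity above — attains $\partial_B$ at a value no larger than $\partial_B(X_0)$, so that the minimality of $|X_0|$ is violated unless the stabiliser is forced to be nontrivial. Pinning down the correct translate is the technical heart of the proof, and is exactly the kind of atom lemma the paper announces as a general tool.
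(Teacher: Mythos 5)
There is a genuine gap, and it sits exactly at the sentence ``It therefore suffices to prove that $\partial_B(X)\ge|B|-1$ for every admissible $X$.'' That universal statement is false. The minimum of $\partial_B$ over admissible sets is $\kappa_1(B)$, and by the paper's Proposition \ref{Cay} a $1$--atom containing the identity is a subgroup $H$, so the minimum equals $|HB|-|H|$; this can be strictly smaller than $|B|-1$. Concretely, in $G=\Z/6\Z$ take $B=\{0,1,3,4\}$ and $X=H=\{0,3\}$: then $X+B=\{0,1,3,4\}\ne G$ and $|(X+B)\setminus X|=2<3=|B|-1$. This does not contradict Kneser because $X+B$ is $H$--periodic; the aperiodicity hypothesis is a property of the \emph{specific} product $AB$, and it is not inherited by the minimiser $X_0$. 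Indeed $X_0B$ is automatically periodic whenever the atom is a nontrivial subgroup, which is precisely the regime in which your chain $\partial_B(A)\ge\partial_B(X_0)\ge|B|-1$ breaks at its second link. So the reduction to the unconstrained isoperimetric minimum discards the one hypothesis that carries the content of the theorem.

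The descent step as you sketch it does not repair this. Quotienting by the stabiliser $H$ of $X_0B$ and applying induction in $G/H$ yields, after lifting, a bound of the shape $|AB|\ge|AH|+|BH|-|H|$, which is weaker than $|A|+|B|-1$ by up to $|H|-1$; recovering the ``$-1$'' requires showing that the stabiliser of $AB$ itself (not of $X_0B$) is trivial is enough, i.e. it requires the full Kneser statement $|AB|\ge|AH_0|+|BH_0|-|H_0|$ with $H_0$ the period of $AB$ --- and that is the theorem, not a lemma on the way to it. Note also that the paper does not prove Theorem \ref{kneserth}: it quotes it from Kneser, uses it as an input (e.g.\ in the proof of Corollary \ref{didericheq}), and explicitly points to Balandraud's separate work for an isoperimetric approach to it. The introduction's remark that the minimum is attained on a subgroup and ``is necessarily $|B|-1$'' is made only for $|G|$ prime, where no nontrivial subgroup exists; your proposal in effect extrapolates the Cauchy--Davenport argument to general abelian groups, where it fails.
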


In \cite{knesrdensite} Kneser gives hints for the proof of his theorem.
A  continuous generalization of this result
is proved by Kneser in  \cite{knesrcomp}.
Other proofs of Kneser's Theorem may be found in \cite{manlivre,natlivre,tv}.
Among the numerous applications of Kneser's Theorem, we mention a result of Dixmier
on  the Frobenius problem  \cite{dixmier}.
 Several attempts were made to generalize Kneser's Theorem to non-abelian groups.
 The first result in this direction  is due to Diderrich:

\begin{theorem}[Diderrich \cite{diderrich}]\label{diderrichth}
Let $G$ be a group and let $A, B\subset G$ be finite
subsets such that $AB$ is not the union of left cosets. Assume moreover that the
elements of $B$ commute. Then $|AB|\ge |A|+|B|-1.$
\end{theorem}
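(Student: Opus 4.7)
The plan is to reduce Theorem~\ref{diderrichth} to Kneser's Theorem (Theorem~\ref{kneserth}) by working inside the subgroup $H=\langle B\rangle$, which is abelian since the elements of $B$ commute. First, I would decompose $A$ by the left cosets of $H$: write $A=\bigsqcup_{i=1}^{t}A_{i}$, where $A_{i}=A\cap x_{i}H$ ranges over the distinct cosets $x_{1}H,\dots,x_{t}H$ that meet $A$. Since $B\subseteq H$, each $A_{i}B\subseteq x_{i}H$, so $AB=\bigsqcup_{i=1}^{t}A_{i}B$ is a disjoint union sitting in $t$ distinct cosets of $H$. Setting $C_{i}=x_{i}^{-1}A_{i}\subseteq H$ gives $|A_{i}B|=|C_{i}B|$, and now $C_{i}B$ is a product of two subsets of the abelian group $H$, so Kneser is available.

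Next, apply the strong form of Kneser to each $C_{i}B$ in $H$: writing $K_{i}\le H$ for the stabilizer of $C_{i}B$ in $H$, one has $|C_{i}B|\ge |C_{i}|+|B|-|K_{i}|$. Summing over $i$,
\[
|AB|\;\ge\;|A|+t|B|-\sum_{i=1}^{t}|K_{i}|.
\]
The hypothesis enters via a stabilizer calculation: if $K\le\bigcap_{i}K_{i}$, then $K$ stabilizes each $C_{i}B$, so $(AB)K=\bigsqcup_{i}x_{i}(C_{i}B)K=AB$, making $AB$ a union of left cosets of $K$ in $G$. The assumption forces $K=\{e\}$, hence $\bigcap_{i}K_{i}=\{e\}$. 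In the single-coset case $t=1$ this gives $K_{1}=\{e\}$ at once, and the claimed bound $|AB|\ge |A|+|B|-1$ follows.

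The main obstacle is the case $t\ge 2$, where the $K_{i}$ can individually be large even though they intersect trivially, and the crude summation does not by itself close. To handle it, I would use the sharper Kneser inequality $|C_{i}B|\ge|C_{i}K_{i}|+|BK_{i}|-|K_{i}|$, together with the observation that the stabilizer of $B$ in $H$ is contained in every $K_{j}$ and hence in $\bigcap_{j}K_{j}=\{e\}$; so $B$ is aperiodic in $H$, and any nontrivial $K_{i}$ must strictly enlarge $B$, yielding $|BK_{i}|\ge|B|+|K_{i}|-(|B|\bmod|K_{i}|)$. The per-coset gains this produces should exactly cover the deficit in $\sum_{i}|K_{i}|$. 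A cleaner alternative, and the one I expect to be most robust, is to induct on $|H|$: if some $K_{j}$ is nontrivial, pass to $H/K_{j}$, which preserves the hypotheses on the images of $A$, $B$ and $AB$ and strictly shrinks $H$, terminating in the aperiodic case already handled.
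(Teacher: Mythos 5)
Your opening reduction --- decomposing $A$ along left cosets of $H=\langle B\rangle$ and applying Kneser inside the abelian group $H$ --- is exactly the paper's first move (Corollary \ref{didericheq}), and your handling of $t=1$ is correct: the hypothesis forces $\bigcap_i K_i=\{e\}$, so a single coset has trivial stabilizer and Kneser finishes. The gap is that the case $t\ge 2$, which is the actual content, is not closed by either of your two suggestions. For the first: from $|C_iB|\ge|C_iK_i|+|BK_i|-|K_i|$ and $|BK_i|\ge|B|+|K_i|-(|B|\bmod|K_i|)$ you only reach $|AB|\ge|A|+t|B|-\sum_i(|B|\bmod|K_i|)$, and the condition $\bigcap_iK_i=\{e\}$ does not imply $\sum_i(|B|\bmod|K_i|)\le(t-1)|B|+1$; for instance two trivially intersecting stabilizers of order $k$ with $|B|=k-1$ give deficits summing to $2k-2>|B|+1$ once $k\ge 4$, so the ``per-coset gains'' need not cover the deficit. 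For the second: passing to $H/K_j$ does not preserve the quantitative statement. The subgroup $K_j$ need not be normal in $G$, so the ``image of $A$'' is not a subset of a quotient group; $AB$ itself is not $K_j$--periodic (only the one piece $A_jB$ is), so $|AB|\ne|ABK_j|$; and, decisively, $|\overline{B}|=|BK_j|/|K_j|$ can be far smaller than $|B|$, so the inductive conclusion $|\overline{A}\,\overline{B}|\ge|\overline{A}|+|\overline{B}|-1$ is much too weak to recover $|AB|\ge|A|+|B|-1$. This is precisely the obstruction that makes Kneser's theorem itself hard, and it does not go away here.

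The missing ingredient --- and the point of the paper's proof --- is an isoperimetric input that removes all but one coset before Kneser is invoked. Proposition \ref{olson} gives $\kappa_1(B)\ge|B|/2$, and Lemma \ref{diderich} bounds the number of indices with $|A_iB|<|K|$ by $(|AB|-|A|)/\kappa_1(B)$, which is $<2$ once one assumes (as one may) that $|AB|\le|A|+|B|-2$. Hence every piece but at most one satisfies $A_iB=A_iK$, a full coset: such pieces contribute no deficit and are unions of cosets of every subgroup of $K$, so the one remaining piece $A_jB$ must be aperiodic, or else $AB$ would be a union of left cosets. Kneser applied to that single piece, with trivial stabilizer, gives $|A_jB|\ge|A_j|+|B|-1$, and summing finishes the proof. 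Without the bound $\kappa_1(B)\ge|B|/2$ (or an equivalent device controlling how many cosets can fail to fill), your argument stalls exactly where you say it does.
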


It was observed in \cite{hdiderich} that this generalization is
equivalent to Kneser's Theorem, c.f. Corollary \ref{didericheq}.
More investigations  and some examples, showing that the natural extension to the non-abelian case fails to hold, can be found in  Olson \cite{olsonsdif}.

The critical pair Theory is  the description of the subsets  $A,B$ with $|AB|=
|A|+|B|-1$. Vosper's Theorem \cite{vosper1,vosper2} states that in a
group with a prime order $|AB|=|A|+|B|-1\le |G|-2$ holds if and only
if $A$ and $B$ are progressions with the same ratio, where $\min
(|A|,|B|)\ge 2$.
Other proofs of Vosper's Theorem may be found in \cite{manlivre,natlivre,tv}.
More recently the authors of
\cite{hrodseth2} obtained a description of sets $A,B$ with
$|AB|=|A|+|B|\le |G|-4,$ if $|G|$ is a prime.

The last result was applied
to sum-free sets in \cite{klfree}, and  to show the existence rainbow solutions of linear equations in \cite{rainbow}.

 Kemperman's critical pair Theory
\cite{kempacta} provides a generalization of Vosper' Theorem to general abelian groups.

In the non-abelian case only few results were known until recent years.
These results are  due to Kemperman \cite{kempcompl},
Olson \cite{olsonaa,olsonjnt,olsonsdif} and Brailowski-Freiman
\cite{brailowski}.

The results described above were proved using the transformations
 introduced by Cauchy \cite{cauchy}, Davenport \cite{davenport},
 Dyson \cite{dyson} and  Kemperman
\cite{kempcompl}.

The basic properties of  the first three transformations
are given in the books \cite{manlivre,natlivre,tv}.

More recently  K\'arolyi \cite{karolyi} used group extensions and
the Feit-Thompson Theorem to obtain a generalization of Vosper's
Theorem to the non-abelian case.

The exponential sums
 method in Additive Number Theory  gives  some sharp estimates for  $|AB|$ in the abelian
case if $|A|,|B|$ are relatively small. The reader may find
applications of this method in the text books \cite{natlivre,tv} and
the papers
 of  Deshouillers-Freiman \cite{desfreim} and  Green-Ruzsa \cite{greenruz}.

Another method in Additive Number Theory based on Nonstandard Analysis was introduced by Jin.
An example of the application of this method may be found in \cite{jin}.

In this paper we are concerned with the isoperimetric method
introduced by the author in \cite{hcras,hejc2,halgebra,hast}. Let us
present briefly some special cases of this method:

 Let $\Gamma =(V,E)$ be a finite
reflexive relation and consider the objective function $X\mapsto
|\Gamma (X)\setminus X|$, defined on the subsets $X$ with $|X|\ge k$
and $|V\setminus \Gamma (X)|\ge k.$ The minimal value of this
objective function is the {\em $kth$--connectivity} and a {\em
$k$--atom } is a set with minimal cardinality where the objective
function achieves its minimal value. The main result proved in
\cite{halgebra} implies that distinct $k$--atoms of $\Gamma$
intersect in at most $k-1$ elements or  that distinct $k$--atoms of
$\Gamma ^{-1}$ intersect in at most $k-1$ elements. This result,
which generalizes some previous results of the author
\cite{hcras,hejc2,hast}, has several applications in Additive Number
Theory as we shall see in the present paper.

The strong connectivity, usually
defined in Graph theory as the minimum cardinality of a cutset, coincides
with our first connectivity. The $kth$--connectivity was
introduced in \cite{halgebra} in connection with some additive
problems.

Let $B$ be a subset of a finite  abelian group $G$ with
$1\in B$ and $B\neq G$. As showed in \cite{hejc2}, the main
result proved in \cite{hcras} implies that the objective function
$X \mapsto|(XB)\setminus X|$, defined on the nonempty subsets $X$ with $XB\neq G$, attains its minimal value on a subgroup. If
$|G|$ is a prime this value is necessarily $|B|-1.$ The
Cauchy-Davenport Theorem follows obviously from this fact.

In this paper we shall present  basic facts about the isoperimetric
method. We shall improve some of our previous results and obtain
generalizations and short proofs for several known results. We also
give some new applications.

The reader may find some applications of the isoperimetric method in
Serra's survey \cite{serra}. Also,   Balandraud \cite{balart}
developed an isoperimetric approach to Kneser's Theorem.

 The paper's organization is the following:

In Section 2, we present the terminology. In Section 3,  we
introduce the concepts of $kth$--connectivity, $k$--fragment and
$k$--atom and prove some elementary properties of the $k$--fragments. In
Section 4, we give some basic properties of the intersection of
fragments. The main result of this section is Theorem
\ref{inter2frag} which gives conditions implying that the
intersection of two $k$--fragments is a $k$--fragment. This theorem
generalizes results contained in
\cite{hcras,hjct,hejc2,hast,halgebra}. In Section 5, we obtain the
structure of $1$--atoms and give few  applications. Most of the
results of this section were proved in \cite{hejc2,hast}. We prove
them since they are needed in several parts of this paper in order
to make the present work self-contained. In Section 6, we
investigate the inequality $|AB|\ge |A|+|B|/2$ and its critical
pairs.  In Section 7,
 Proposition \ref{m+3} gives the value of
$\kappa _2$ for  a set with a small
cardinality. As an application we generalize the result of K\'arolyi
\cite{karolyi} mentioned above.
In Section 8, we
determine the structure of the $2$--atoms in the abelian case if $\kappa _2 (S)\le |S|$. This
result extends to the infinite case a previous result of the author
\cite{hactaa}. The proof given here is much easier than our first
proof.
 In Section 9, we  give an upper
bound for the size of a $2$--atom.
As an application we generalize to the infinite case a result proved
in the finite case  by Arad and Muzychuk \cite{arad2}.
In Section 10, we present a new basic tool: the strong isoperimetric property.
 This property will be used in a coming
 paper \cite{hkemp} to deduce Kemperman critical pair Theory from this property of the $2$-atoms.

In the Appendix, we give a simple isoperimetric proof of Menger's Theorem in order to make the present work self-contained.

\section{Terminology and preliminaries}

\subsection{Groups}
Let $G$ be a group and let $S$ be a subset of $G$. The subgroup
generated by $S$ will be denoted by $\subgp{S}$. Let  $ A,B$ be
subsets of $ G $. The {\em Minkowski product} is defined as
$$AB=\{xy \ : \ x\in A\  \mbox{and}\ y\in
  B\}.$$

Let  $H$ be a subgroup. Recall that a {\em left $H$--coset} is a set of the $aH$ for some $a\in G$.
The family $\{aH; a\in G\}$ induces a partition of $G$. The trace of this partition on a subset $A$
will be called a {\em left $H$--decomposition}
of $A$.

Therefore a partition $A=\bigcup \limits_{i\in I}
A_i$  is a { left $H$--decomposition} if and only if  $A_i$ is a nonempty intersection of
some left $H$--coset with $A$ for every $i\in I$.
A {\em right} $H$--decomposition is defined similarly.

Let $X$ be a subset of a group $G$. We write

$\Pi^r(X)=\{x\in G : Xx=X\}$
and $\Pi^l(X)=\{x\in G : xX=X\}.$


Notice that \begin{equation}
X=X\Pi^r(X)=\Pi^l(X)X.\label{Pir}\end{equation}


We use the following well known fact:
\begin{lemma}[\cite{manlivre}, Theorem 1]
Let $G$ be a finite group and let
 $A,B$  be  subsets
 such that $|A|+|B|>|G|$.
 Then $AB=G$.

\label{prehistorical}
 \end{lemma}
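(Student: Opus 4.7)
The plan is to show $AB = G$ by a pigeonhole argument: pick an arbitrary $g \in G$ and exhibit a factorization $g = ab$ with $a \in A$, $b \in B$. The key reformulation is that $g \in AB$ is equivalent to $A \cap gB^{-1} \neq \emptyset$, since $a = gb^{-1}$ for some $b \in B$ means exactly $a \in A$ and $ab = g$.

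So first I would fix $g \in G$ and consider the two subsets $A$ and $gB^{-1} = \{gb^{-1} : b \in B\}$ of $G$. Since left translation is a bijection and inversion is a bijection on $G$, we have $|gB^{-1}| = |B|$. Therefore
\[
|A| + |gB^{-1}| = |A| + |B| > |G|.
\]
Two subsets of a finite set whose cardinalities sum to more than the size of the ambient set must intersect, so there exists $a \in A \cap gB^{-1}$. Writing $a = gb^{-1}$ with $b \in B$ yields $g = ab \in AB$. Since $g$ was arbitrary, $AB = G$.

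There is essentially no obstacle here; the only thing to be careful about is that the argument uses only that left multiplication by $g$ and inversion are bijections on a finite group, so no commutativity is needed and the lemma holds in arbitrary finite groups, matching the statement.
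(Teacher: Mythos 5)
Your proof is correct and complete; this is the standard pigeonhole argument (for each $g$, the sets $A$ and $gB^{-1}$ must intersect since $|A|+|gB^{-1}|=|A|+|B|>|G|$), and it rightly uses nothing beyond the bijectivity of translation and inversion, so it works for arbitrary finite groups. The paper itself gives no proof of this lemma, simply citing Mann's book, where essentially this same argument appears.
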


\subsection{Graphs}
 Let $V$ be a set and let $E \subset V\times V$.  The relation
$\Gamma = (V,E)$ will be called  a  {\em graph}.
 The elements of  $V$
will be called {\em points} or {\em vertices}.
The elements of  $E$
will be called {\em arcs} or {\em edges}.

The diagonal of $V$ is by definition $\Delta
_V=\{(x,x) : x\in V\}$.
 The graph $\Gamma$ is said to be
{\em reflexive} if $\Delta _V \subset E.$
 The {\em reverse   }  of $\Gamma $ is by definition
$\Gamma ^{-1}=(V,E^{-1})$, where $E^{-1}=\{(x,y) : \   (y,x) \in E\}.$

Let $ a\in V$ and let $A\subset V$. The image of $a$ is by
definition  $$\Gamma (a)=\{x :  (a,x)\in E\}.$$ The image of $A$ is
by definition $$\Gamma (A)=\bigcup \limits_{x\in A} \Gamma (x).$$

 The {\it valency}
of $x$ is by definition $d _{\Gamma}(x)=|\Gamma (x)|$. We shall say that
$\Gamma$ is {\em locally finite} if  $d _{\Gamma} (x)$ is finite for
all $x$. We  put $\delta ({\Gamma})=\min \{d_{\Gamma} (x); x\in
V\}.$  The graph $\Gamma$ will be called {\em regular} with valency
$r$ if the elements of $V$ have the same  valency $r$.


Let $\Gamma =(V,E)$ be a  graph. For $X\subset V$, the {\em
boundary} of $X$ is by definition
 $$\partial _{\Gamma}(X)= \Gamma (X)\setminus X.$$

When the context is clear the reference to $\Gamma$ will be omitted. In this case we write
\begin{itemize}
  \item $\partial _{-}(X)=\Gamma ^{-1}(X)\setminus X$,
\item $X^\curlywedge=V\setminus {(X\cup \Gamma (X))}$,
\item $X^\curlyvee=V\setminus{(X\cup \Gamma ^{-1}(X))}.$
\end{itemize}

Most of the time we shall work with reflexive graphs. In this case
we have  $ \Gamma (X)=X\cup \Gamma (X).$

Notice that there is no arc connecting $X$ to $X^{\curlywedge},$
since any arc starting in $X$ must end  in $X\cup
\partial(X).$ The reader should always have in mind this obvious
fact.

Let $\Gamma =(V,E)$ be a reflexive graph. We shall say that a subset
$X$ induces a {\em $k$--separation} if $ k\le \min (|X|,|X^\curlywedge|)<\infty$. We shall say that $\Gamma$ is
$k$--separable if there is a subset $X$ which induces a $k$--separation.

Observe that for every $k,$ $\Gamma$ is $k$--separable if $V$ is
infinite.

Notice that $X$ induces a $k$--separation of $\Gamma$ if and only if
$X^\curlywedge$ induces a $k$--separation of $\Gamma ^{-1}$. In
particular $\Gamma$ is $k$--separable if and only if $\Gamma ^{-1}$
is $k$-separable.

A subset $X$ such that $\Gamma (X)\subset X$ is called a {\em source} of the graph $\Gamma$.
A subset $X$ such that $\Gamma ^{-1} (X)\subset X$ is called a {\em sink} of the graph $\Gamma$.

 A set $T$ of the form $\partial (F)$, where $F\neq \emptyset$
and $F^{\curlywedge}\neq \emptyset$ is called a {\em cutset}. Notice that the
deletion of $T$ destroys all the arcs connecting $F$ to $F^{\curlywedge}$.

Intuitively speaking $\Gamma$ is $k$--separable if there is a
cutset(namely $\partial (X)$) whose deletion creates a source  $X$ of
size $\ge k$ and a sink  ( namely $X^\curlywedge$) of size $\ge k$.

\subsection{Cayley graphs}

 Let $\Gamma=(V,E)$, $\Phi=(W,F)$ be two graphs. A map $f : V  \mapsto W$
will be called a {\em homomorphism }if $(f(x),f(y))\in F$ for all
$x,y\in V$ such that $(x,y)\in E$.

The graph $\Gamma$ will be called {\em point-transitive}  if for all
$x,y\in V$, there is an automorphism $f$ such that $y=f(x)$. Clearly
a point-transitive graph is regular.

Let $G$ be a group
   and let  $ a \in G $. The permutation $ \gamma_a : x \mapsto ax
$ of $G$ will be called   { \em left- translation}. Let  $S$  be a
subset of   $G$. The graph $(G,E),$ where  $ E=\{ (x,y) : x^{-1}y \
\in S \}$ is called a {\it Cayley graph}.  It will  be denoted by
$\mbox{Cay} (G,S)$.

Let $\Gamma =\mbox{Cay} (G,S)$   and  let   $F \subset G $.
Clearly
 $\Gamma (F)=FS $. In the case of a Cayley graph, we shall write $X^S$ instead of
 $X^\curlywedge$. More precisely we put $$X^S=G\setminus (XS).$$

The following facts are easily seen:

\begin{itemize}
  \item  $ (\mbox{Cay} (G,S))^{-1} =\mbox{Cay} (G,S^{-1});$
  \item     For every $a \in G $,  $\gamma_a$ is an
 automorphism of $ \mbox{Cay} (G,S),$ and hence $  \mbox{Cay}
  (G,S ) $  is point-transitive.
\end{itemize}

\section{The isoperimetric method revisited}

In this section, we introduce the concepts of $kth$--connectivity,
$k$--fragment and $k$--atom. We also prove some elementary properties of
these objects.

 Let $\Gamma =(V,E)$ be a locally finite $k$--separable reflexive  graph.
 The {\em $kth$--connectivity}
of $\Gamma$ (called  {\em $kth$--isoperimetric number} in
\cite{halgebra})
 is defined  as

\begin{equation}
\kappa _k (\Gamma )=\min  \{|\partial (X)|\   :  \ \
\infty >|X|\geq k \ {\rm and}\ |V\setminus  \Gamma(X)|\ge k\}.
\label{eqcon}
\end{equation}

 A finite subset $X$ of $V$ such that $|X|\ge k$,
$|V\setminus \Gamma (X)|\ge k$ and $|\partial (X)|=\kappa
_k(\Gamma)$ is called a {\em $k$--fragment} of $\Gamma$. A
$k$--fragment with minimum cardinality is called a {\em $k$--atom}.
The cardinality of a $k$--atom of $\Gamma$  will be denoted by
$\alpha_k(\Gamma)$.

These notions, which generalize some concepts in \cite{hcras,hjct,hejc2,hast}, were introduced in \cite{halgebra}.

For non--$k$--separable graphs, the notions of connectivity, fragments and atoms were not defined.
In order to formulate logically correct statements without assuming  $k$--separability, we
shall now extend the above notions to non $k$--separable graphs by convention:

Let $\Gamma =(V,E)$ be a non $k$--separable graph with $|V|\ge
2k-1$. Then $\Gamma$ is necessarily finite. We put in this case
$\kappa _k(\Gamma)=|V|-2k+1.$ In this case, any set with cardinality  $k$ will be called a {\em $k$--fragment}
and a  {\em $k$--atom}.

A $k$--fragment of $\Gamma ^{-1}$ will be  called a {\em negative}
$k$--fragment. We use the following notations, where the reference
to $\Gamma$ could be implicit:
\begin{itemize}
\item  $\alpha_{-k}(\Gamma ) =\alpha_k(\Gamma ^{-1}),$
  \item  $\kappa_{-k}(\Gamma ) =\kappa_k(\Gamma ^{-1})$.
\end{itemize}

\begin{lemma}
 Let $\Gamma =(V,E)$ be a locally finite  reflexive graph such that
$|V|\ge 2k-1$. Then  $\kappa _k (\Gamma)$ is the maximal integer $j$
such that for every finite subset $X\subset V$  with $|X|\geq k$,

\begin{equation}
|\Gamma (X)|\geq \min \Big(|V|-k+1,|X|+j\Big).
\label{eqisoper0}
\end{equation}
\end{lemma}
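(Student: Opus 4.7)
The plan is to prove the stated maximality by two directed inequalities: verify that $j = \kappa_k(\Gamma)$ satisfies the isoperimetric bound (\ref{eqisoper0}), and rule out any strict improvement by exhibiting a tight witness. Throughout I would use only the definition (\ref{eqcon}) plus the elementary identity $|\Gamma(X)| = |X| + |\partial(X)|$ that comes from reflexivity (since then $X \subset \Gamma(X)$, making $\Gamma(X) = X \sqcup \partial(X)$).

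Assume first that $\Gamma$ is $k$--separable. To show that the inequality holds with $j = \kappa_k$, I would fix any finite $X$ with $|X|\ge k$ and split on whether $|V\setminus \Gamma(X)|\ge k$. If it is, then $X$ is one of the sets in the minimization defining $\kappa_k$, so $|\partial(X)|\ge \kappa_k$, and reflexivity yields $|\Gamma(X)| = |X|+|\partial(X)|\ge |X|+\kappa_k$. Otherwise $|V\setminus \Gamma(X)|<k$, i.e.\ $|\Gamma(X)|\ge |V|-k+1$. In either subcase $|\Gamma(X)|\ge \min(|V|-k+1,\,|X|+\kappa_k)$, as required.

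For maximality, I would test (\ref{eqisoper0}) at a $k$--fragment $F$ (which exists by $k$--separability). By definition $|F|\ge k$, $|V\setminus \Gamma(F)|\ge k$, and $|\partial(F)| = \kappa_k$, so $|\Gamma(F)| = |F|+\kappa_k \le |V|-k < |V|-k+1$. If some $j>\kappa_k$ also satisfied (\ref{eqisoper0}), then either $|F|+j\le |V|-k+1$, in which case $\min(|V|-k+1,|F|+j) = |F|+j > |F|+\kappa_k = |\Gamma(F)|$, a contradiction; or $|F|+j > |V|-k+1$, in which case the minimum equals $|V|-k+1 > |\Gamma(F)|$, again a contradiction. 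Hence $\kappa_k$ is maximal, and the $k$--separable case is complete.

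In the non--$k$--separable case, $\Gamma$ is finite and $\kappa_k = |V|-2k+1$ by convention, while every finite $X$ with $|X|\ge k$ automatically satisfies $|\Gamma(X)|\ge |V|-k+1$. For any such $X$ the right-hand side $\min(|V|-k+1,|X|+\kappa_k)$ equals $|V|-k+1$ (since $|X|+\kappa_k\ge k+(|V|-2k+1) = |V|-k+1$), so the inequality is immediate, with equality already forced at any $k$--subset by the convention that declares it a $k$--fragment. The main obstacle is purely bookkeeping: one must keep the case-split clean and note that $|F|+\kappa_k$ lies strictly below $|V|-k+1$ precisely when $F$ is a genuine fragment, which is exactly what makes the minimum on the right-hand side take the useful value $|X|+\kappa_k$ at such $F$.
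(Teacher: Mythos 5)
Your proposal is correct and matches the paper's approach exactly: the paper offers no written proof, stating only that (\ref{eqisoper0}) is an immediate consequence of the definitions, and your two-directional verification (the bound holds for $j=\kappa_k$ via $|\Gamma(X)|=|X|+|\partial(X)|$, and a $k$--fragment witnesses that no larger $j$ works) is precisely the intended unpacking. The only soft spot is the non--$k$--separable case, where every integer $j$ satisfies (\ref{eqisoper0}) because the minimum always equals $|V|-k+1$, so no maximal $j$ literally exists and the statement holds only by the paper's convention; your remark that equality is ``forced at any $k$--subset'' does not actually establish maximality there, but this is an imprecision in the lemma itself rather than in your argument.
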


Formulae (\ref{eqisoper0}) is an immediate consequence of the definitions. We shall
call (\ref{eqisoper0}) the {\em isoperimetric inequality}. The
reader may use the conclusion of this lemma as a definition of
$\kappa _k (\Gamma)$.

{\bf Remark}. For any locally finite reflexive  graph $\Gamma
=(V,E)$ with $|V|\ge 1$, we have  $\kappa _1(\Gamma)\le \delta ({\Gamma})-1.$

\begin{lemma} {Let $\Gamma =(V,E)$ be a reflexive
  finite graph with $|V|\ge 2k-1$. Then
\begin{equation}\kappa _k =\kappa _{-k}.\label{eqkappa-}
\end{equation}

\label{finiteduality} }
\end{lemma}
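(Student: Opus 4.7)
The plan is to show $\kappa_{-k}(\Gamma) \le \kappa_k(\Gamma)$; applying the same statement to $\Gamma^{-1}$ (whose reverse is $\Gamma$) will then yield the reverse inequality, so the lemma will follow by symmetry. Before doing that I would dispose of the degenerate case: by the remark in Section~2.2 that $\Gamma$ is $k$-separable if and only if $\Gamma^{-1}$ is $k$-separable, if neither graph is $k$-separable then both $\kappa_k(\Gamma)$ and $\kappa_{-k}(\Gamma)=\kappa_k(\Gamma^{-1})$ are defined by the convention and both equal $|V|-2k+1$.

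So assume $\Gamma$ is $k$-separable and pick a $k$-fragment $F$ of $\Gamma$. The natural candidate for a test set in the definition of $\kappa_k(\Gamma^{-1})$ is the complement $F^\curlywedge$. Reflexivity gives the disjoint decomposition $V = F \sqcup \partial(F) \sqcup F^\curlywedge$, so $|F^\curlywedge|\ge k$ holds for free. What remains to check is (i) $|V \setminus \Gamma^{-1}(F^\curlywedge)|\ge k$, and (ii) $|\partial_{-}(F^\curlywedge)| \le |\partial(F)|$.

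Both (i) and (ii) reduce to the single inclusion $\Gamma^{-1}(F^\curlywedge)\cap F = \emptyset$. This is the dualized form of the paper's standing observation that no arc joins $F$ to $F^\curlywedge$: any arc in $\Gamma^{-1}$ emanating from $F^\curlywedge$ and landing in $F$ would be an arc of $\Gamma$ from $F$ to $F^\curlywedge$, which is forbidden. Hence $V \setminus \Gamma^{-1}(F^\curlywedge) \supseteq F$, giving (i), while $\Gamma^{-1}(F^\curlywedge) \subseteq \partial(F) \cup F^\curlywedge$, giving
\[
\partial_{-}(F^\curlywedge) = \Gamma^{-1}(F^\curlywedge) \setminus F^\curlywedge \subseteq \partial(F),
\]
which is (ii). Combining, $F^\curlywedge$ is admissible in the infimum defining $\kappa_k(\Gamma^{-1})$, so $\kappa_{-k}(\Gamma) \le |\partial_{-}(F^\curlywedge)| \le |\partial(F)| = \kappa_k(\Gamma)$.

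I do not expect a substantive obstacle. The whole content of the argument is the symmetry between a source $F$ and its complementary sink $F^\curlywedge$, and the only place where care is needed is invoking reflexivity to identify $F^\curlywedge$ with $V \setminus \Gamma(F)$ and to get the clean decomposition $V = F \sqcup \partial(F) \sqcup F^\curlywedge$; everything else is bookkeeping, with the convention on $\kappa_k$ for non-$k$-separable graphs handling the boundary case automatically.
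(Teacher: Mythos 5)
Your proof is correct and follows essentially the same route as the paper's: dispose of the non-separable case by the convention, take a $k$-fragment $F$, observe $\partial_{-}(F^\curlywedge)\subseteq\partial(F)$ to get $\kappa_{-k}\le\kappa_k$, and conclude by applying the same argument to $\Gamma^{-1}$. You merely spell out the admissibility checks ($|F^\curlywedge|\ge k$ and $|V\setminus\Gamma^{-1}(F^\curlywedge)|\ge k$) that the paper leaves implicit.
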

\begin{proof}

As observed above, $\Gamma$ is $k$-separable if and only if $\Gamma ^{-1}$
is $k$-separable.
So (\ref{eqkappa-}) holds by convention if
 $\Gamma$ is non $k$--separable.
Suppose now that  $\Gamma$ is $k$--separable, and let $X$ be a
$k$--fragment of $\Gamma$. We have clearly $\partial _{-}
(X^\curlywedge)\subset
\partial (X)$.
Therefore $$\kappa _k(\Gamma )\ge |\partial (X)|\ge |\partial _{-}
(X^\curlywedge)|\ge \kappa _{-k}.$$ The reverse inequality follows
by applying this one to $\Gamma ^{-1}$.
\end{proof}

\begin{lemma} \label{negative}{Let $\Gamma =(V,E)$ be a  locally finite   $k$--separable reflexive
graph. Let   $X$     be a  $k$--fragment. Then

 \begin{eqnarray}
\partial _{-}  (X^\curlywedge)&=&
\partial (X),\label{eqduall}\\
 (X^\curlywedge)^\curlyvee&=&{X}. \label{eqdualf}
\end{eqnarray}
 In particular
$X^\curlywedge$ is a negative $k$--fragment,  if  $V$ is finite.

}

\end{lemma}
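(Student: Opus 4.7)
The plan hinges on the disjoint decomposition $V = X \sqcup \partial(X) \sqcup X^\curlywedge$, which holds because reflexivity forces $X \subseteq \Gamma(X)$. From this, \eqref{eqdualf} will drop out of \eqref{eqduall} by complementation, so the work lies in proving \eqref{eqduall}.

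The inclusion $\partial_-(X^\curlywedge) \subseteq \partial(X)$ is direct: any $y \in \partial_-(X^\curlywedge)$ has a neighbor $z \in X^\curlywedge$, which rules out $y \in X$ (else $z \in \Gamma(X)$), while $y \notin X^\curlywedge$ by definition of $\partial_-$; hence $y \in \partial(X)$.

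For the reverse inclusion $\partial(X) \subseteq \partial_-(X^\curlywedge)$ I would argue by minimality of $|\partial(X)|$. Suppose for contradiction that some $y \in \partial(X)$ had $\Gamma(y) \cap X^\curlywedge = \emptyset$, equivalently $\Gamma(y) \subseteq \Gamma(X)$. Then $X' := X \cup \{y\}$ would satisfy $\Gamma(X') = \Gamma(X)$, hence $(X')^\curlywedge = X^\curlywedge$ (still of cardinality $\geq k$), while $\partial(X') = \partial(X) \setminus \{y\}$ would have cardinality $\kappa_k - 1$. Since $|X'| \geq k$, the set $X'$ is admissible in the definition \eqref{eqcon} of $\kappa_k$, contradicting the minimality of the boundary. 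Hence every $y \in \partial(X)$ has a neighbor in $X^\curlywedge$, establishing \eqref{eqduall}. Identity \eqref{eqdualf} then follows at once, since reflexivity and \eqref{eqduall} give $\Gamma^{-1}(X^\curlywedge) = X^\curlywedge \cup \partial_-(X^\curlywedge) = X^\curlywedge \cup \partial(X)$, whose complement in $V$ is $X$.

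For the last clause, assume $V$ finite. Then $X^\curlywedge$ is finite, $|X^\curlywedge| \geq k$, and $|V \setminus \Gamma^{-1}(X^\curlywedge)| = |X| \geq k$ by \eqref{eqdualf}, while $|\partial_-(X^\curlywedge)| = |\partial(X)| = \kappa_k = \kappa_{-k}$ by Lemma~\ref{finiteduality}; this is exactly what it means for $X^\curlywedge$ to be a negative $k$-fragment. The only substantive step is the ``absorb a wasted boundary vertex into $X$'' trick used to force \eqref{eqduall}; once that is spotted, the remaining manipulations are purely bookkeeping.
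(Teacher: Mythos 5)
Your proof is correct and follows essentially the same route as the paper: the easy inclusion $\partial_-(X^\curlywedge)\subseteq\partial(X)$, then the ``absorb $y$ into $X$'' argument to show that a vertex of $\partial(X)$ with no out-neighbour in $X^\curlywedge$ would give $|\partial(X\cup\{y\})|\le\kappa_k-1$, contradicting minimality, after which \eqref{eqdualf} and the finite case (via Lemma~\ref{finiteduality}) are immediate. You are in fact slightly more careful than the paper in checking that $X\cup\{y\}$ is admissible in the definition of $\kappa_k$.
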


\begin{proof}

We have clearly
 $\partial _{-}  (X^\curlywedge)\subset
\partial (X).$

We must have $\partial _{-} (X^\curlywedge)= \partial (X)$, since
otherwise there is $y\in
\partial (X)\setminus
\partial _{-} (X^\curlywedge)$. It follows that
$|\partial (X\cup \{y\})|\le |\partial (X)|-1$, contradicting the
definition of $\kappa _k.$ This proves (\ref{eqduall}).

 We have $\Gamma ^{-1}
(X^\curlywedge)=X^\curlywedge\cup
\partial _{-}(X^\curlywedge) =X^\curlywedge\cup
\partial (X)=V\setminus X.$
This implies obviously (\ref{eqdualf}).

Assume now that $V$ is finite. We have by Lemma \ref{finiteduality},
$|\partial _{-}  (X^\curlywedge)|= |\partial (X)|=\kappa _k=\kappa
_{-k}.$

This proves that $X^\curlywedge$ is a negative $k$--fragment.\end{proof}

We conclude this section by introducing two important notions:

Let $\Gamma =(V,E)$ be a  reflexive graph. We shall say that
$\Gamma$ is a {\em Cauchy graph} if $\Gamma$ if  $\kappa _1= \delta
-1.$
 We shall say that
$\Gamma$ is a {\em Vosper graph} if $\Gamma$ is  non--$2$-separable or $\kappa _2\ge \delta.$

Clearly $\Gamma$ is a Vosper graph if and only if for every
$X\subset V$ with $|X|\ge 2$,
 $$|\Gamma(X)|\ge \min \Big(|V|-1, |X|+\delta\Big).$$

\section{The intersection
of fragments  }

 The main result of this section is
Theorem \ref{inter2frag} which gives conditions implying that the
intersection of two $k$--fragments is a $k$--fragment. Theorem
\ref{inter2frag} implies that  two distinct $k$--atoms intersect in
at most  $k-1$ elements if $\alpha _k\leq \alpha _{-k}$.


\begin{lemma} \cite{hast}\label{partialsub}{Let $\Gamma =(V,E)$ be a  locally finite   reflexive
graph. Let   $X,Y$     be finite nonempty subsets. Then

 \begin{equation}\label{submodularity}
|\partial  (X \cup Y)|+|\partial  (X \cap Y)|\le |\partial  (X )|+|\partial  ( Y)|
\end{equation}}
\end{lemma}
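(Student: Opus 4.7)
The plan is to reduce the boundary inequality to a standard inclusion-exclusion inequality for the image operator $\Gamma$, using reflexivity to replace $\partial$ by $\Gamma$ in a controlled way.

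First, I would observe that since $\Gamma$ is reflexive, $X\subseteq \Gamma(X)$ for every $X$, so
\[
|\partial(X)|=|\Gamma(X)\setminus X|=|\Gamma(X)|-|X|.
\]
Applying this to all four sets appearing in (\ref{submodularity}) and using the identity $|X\cup Y|+|X\cap Y|=|X|+|Y|$, the claim reduces to
\[
|\Gamma(X\cup Y)|+|\Gamma(X\cap Y)|\le |\Gamma(X)|+|\Gamma(Y)|.
\]

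Next, I would use two basic properties of $\Gamma$ that hold for any graph (no reflexivity needed here). On the one hand, by definition of the image of a union,
\[
\Gamma(X\cup Y)=\Gamma(X)\cup\Gamma(Y).
\]
On the other hand, any $y\in\Gamma(X\cap Y)$ is the image of some $x\in X\cap Y$, hence lies in both $\Gamma(X)$ and $\Gamma(Y)$, so
\[
\Gamma(X\cap Y)\subseteq\Gamma(X)\cap\Gamma(Y).
\]
Combining these with the standard identity $|A\cup B|+|A\cap B|=|A|+|B|$ applied to $A=\Gamma(X)$, $B=\Gamma(Y)$ yields
\[
|\Gamma(X\cup Y)|+|\Gamma(X\cap Y)|\le |\Gamma(X)\cup\Gamma(Y)|+|\Gamma(X)\cap\Gamma(Y)|=|\Gamma(X)|+|\Gamma(Y)|,
\]
which finishes the proof.

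There is essentially no obstacle here: the proof is a two-line manipulation once one notices that reflexivity converts the boundary into the image minus the set, and that submodularity of cardinality on $\Gamma(X),\Gamma(Y)$ does the rest. The only mild subtlety is being careful that the sets involved are finite so that the subtraction $|\Gamma(X)|-|X|$ makes sense; this is guaranteed since $X,Y$ are finite and $\Gamma$ is locally finite, so $\Gamma(X)$ and $\Gamma(Y)$ are finite, and hence so are all the unions, intersections, images, and boundaries appearing in the argument.
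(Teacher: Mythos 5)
Your proof is correct and is essentially the same as the paper's: both reduce the statement, via reflexivity and $|\partial(Z)|=|\Gamma(Z)|-|Z|$, to the inequality $|\Gamma(X\cup Y)|+|\Gamma(X\cap Y)|\le |\Gamma(X)|+|\Gamma(Y)|$, which follows from $\Gamma(X\cup Y)=\Gamma(X)\cup\Gamma(Y)$, $\Gamma(X\cap Y)\subseteq\Gamma(X)\cap\Gamma(Y)$, and inclusion--exclusion. Your remark on finiteness of all the sets involved is a welcome extra precaution but does not change the argument.
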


\begin{proof}

 Observe that
\begin{eqnarray*}
|\Gamma (X\cup Y)|&=&|\Gamma (X)\cup \Gamma (Y)|\\
&=&|\Gamma (X)|+|\Gamma (Y)|-|\Gamma(X)\cap \Gamma (Y)|\\
&\le& |\Gamma (X)|+|\Gamma (Y)|-|\Gamma(X\cap Y)|
\end{eqnarray*}
The result follows now by subtracting the equation $|X\cup Y|=|X|+|Y|-|X\cap Y|$.
\end{proof}

The following result is proved in \cite{hast} in the special case
$\kappa _2=\kappa _1$. Indeed the paper \cite{hast} was concerned
only with  Vosper graphs. The concept of $\kappa _k$ was introduced
two years later in \cite{halgebra}.

\begin{theorem}\cite{hast}  Let $\Gamma =(V,E)$ be a reflexive locally finite
$k$--separable graph.   Let $X,Y$ be two fragments of
$\Gamma$ such that $|X\cap Y|\ge k$ and $|X|-|X\cap Y|+k\le |Y^{\curlywedge}|$.

Then  $X\cap Y$ and $X\cup Y$ are $k$--fragments of $\Gamma$.

\label{inter2frag} \end{theorem}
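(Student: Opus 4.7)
The plan is to combine the submodularity of the boundary operator (Lemma \ref{partialsub}) with the extremality of $\kappa_k$. Submodularity gives
$$|\partial(X\cup Y)| + |\partial(X\cap Y)| \le |\partial X| + |\partial Y| = 2\kappa_k,$$
so if I can show that both $X\cap Y$ and $X\cup Y$ are admissible sets in the definition of $\kappa_k$ (each has cardinality at least $k$ and each of their $\curlywedge$-complements has cardinality at least $k$), then both boundary terms on the left are at least $\kappa_k$ and must therefore equal $\kappa_k$. This will prove that $X\cap Y$ and $X\cup Y$ are $k$-fragments.

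Admissibility of $X\cap Y$ is the easy half: $|X\cap Y|\ge k$ is assumed, and from $\Gamma(X\cap Y)\subseteq \Gamma(X)$ I get $X^\curlywedge\subseteq (X\cap Y)^\curlywedge$, so $|(X\cap Y)^\curlywedge|\ge |X^\curlywedge|\ge k$ since $X$ is a $k$-fragment. This gives $|\partial(X\cap Y)|\ge \kappa_k$ and hence, by submodularity, the provisional bound $|\partial(X\cup Y)|\le \kappa_k$.

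The main obstacle is verifying $|(X\cup Y)^\curlywedge|\ge k$, which is where the numerical hypothesis enters. If $V$ is infinite this is automatic from local finiteness (since $\Gamma(X\cup Y)$ is finite). When $V$ is finite, I will use the partition $V = (X\cup Y)\cup \partial(X\cup Y)\cup (X\cup Y)^\curlywedge$ together with the provisional bound $|\partial(X\cup Y)|\le \kappa_k$ and $|X\cup Y|=|X|+|Y|-|X\cap Y|$ to deduce
$$|(X\cup Y)^\curlywedge| \ge |V|-|X|-|Y|+|X\cap Y|-\kappa_k.$$
Since $Y$ is a $k$-fragment, $|V|-|Y|-\kappa_k = |Y^\curlywedge|$, so the right-hand side equals $|Y^\curlywedge|-|X|+|X\cap Y|$, which is at least $k$ by hypothesis. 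Thus $X\cup Y$ is admissible, so $|\partial(X\cup Y)|\ge \kappa_k$, and combined with the earlier reverse inequalities every inequality becomes an equality, completing the proof.
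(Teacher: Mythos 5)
Your proof is correct and follows essentially the same route as the paper: admissibility of $X\cap Y$ gives $|\partial(X\cap Y)|\ge\kappa_k$, submodularity then forces $|\partial(X\cup Y)|\le\kappa_k$, the numerical hypothesis $|X|-|X\cap Y|+k\le|Y^\curlywedge|$ converts this into admissibility of $X\cup Y$, and a final application of submodularity pins both boundaries at $\kappa_k$. Your write-up is in fact cleaner than the paper's (whose final display contains a typo), and your separate treatment of the infinite case is a harmless refinement of the same argument.
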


\begin{proof}
Put $\kappa _k=\kappa _k(\Gamma).$
By the definition of $\kappa _k$, we have $|\partial (X\cap Y)|\ge \kappa _k$. Hence we have
by (\ref{submodularity}),
$\kappa _k + |\partial(X\cup Y)|\le 2\kappa _k$. It follows that  $ |\Gamma(X\cup Y)|=|X\cup Y|+ |\partial(X\cup Y)|\le |V|-k$.

By (\ref{submodularity}),
$$2 \kappa _k \le  |\partial(X\cup Y)|+ |\Gamma(X\cup Y)|=|X\cap Y|\le  |\partial(X)|+|\partial ( Y)|=2\kappa _k.$$
It follows that $X\cap Y$ and $X\cup Y$ are $k$--fragments of $\Gamma$.\end{proof}

The next consequence of Theorem \ref{inter2frag} will be a main tool
in this paper.

\begin{theorem} \cite{halgebra}{ Let $\Gamma =(V,E)$ be a reflexive locally finite
$k$--separable graph. Also assume that either $V$ is infinite or
$\alpha _k \le \alpha _{-k},$

Let $A$ be a $k$--atom and let
   $F$   be a   $k$-fragment such that  $|A\cap F|\ge k$.
Then  $A\subset F$

In particular two distinct $k$-atoms intersect in  at most $k-1$
elements. \label{gintersection}} \end{theorem}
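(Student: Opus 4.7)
The plan is to derive the theorem as a direct application of Theorem~\ref{inter2frag} with $X=A$ and $Y=F$. The first hypothesis of that theorem, $|X\cap Y|\ge k$, is given by assumption, so everything reduces to verifying the second hypothesis
\[
|A|-|A\cap F|+k\le |F^{\curlywedge}|,
\]
in each of the two cases allowed by the statement.

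If $V$ is infinite, then since $F$ is a $k$-fragment it is finite, but $\Gamma$ is locally finite, so $\Gamma(F)$ is finite and $F^{\curlywedge}=V\setminus\Gamma(F)$ is infinite. The inequality is therefore trivially satisfied. If instead $V$ is finite and $\alpha_k\le\alpha_{-k}$, Lemma~\ref{negative} tells us that $F^{\curlywedge}$ is a negative $k$-fragment, so $|F^{\curlywedge}|\ge\alpha_{-k}\ge\alpha_k=|A|$. Combined with $|A\cap F|\ge k$ this yields
\[
|A|-|A\cap F|+k\;\le\;|A|\;\le\;|F^{\curlywedge}|,
\]
as required. In either case Theorem~\ref{inter2frag} applies and $A\cap F$ is a $k$-fragment of $\Gamma$.

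Once this is established, the rest is immediate: since $A$ is a $k$-atom, every $k$-fragment has cardinality at least $|A|$, so $|A\cap F|\ge|A|$, forcing $A\cap F=A$ and thus $A\subset F$. For the final sentence, suppose $A_1,A_2$ are distinct $k$-atoms with $|A_1\cap A_2|\ge k$; applying the first part with $A=A_1$ and $F=A_2$ gives $A_1\subset A_2$, and since $|A_1|=|A_2|=\alpha_k$ we conclude $A_1=A_2$, a contradiction.

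The only delicate step is verifying the second hypothesis of Theorem~\ref{inter2frag}, which is precisely where the dichotomy ``$V$ infinite or $\alpha_k\le\alpha_{-k}$'' is used, via Lemma~\ref{negative} in the finite case. Everything else is a one-line consequence of the minimality definition of a $k$-atom.
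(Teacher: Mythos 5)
Your proof is correct and follows essentially the same route as the paper: both verify $|F^{\curlywedge}|\ge|A|$ (trivially in the infinite case, via Lemma \ref{negative} and $\alpha_k\le\alpha_{-k}$ in the finite case), then invoke Theorem \ref{inter2frag} and the minimality of the atom. You are merely more explicit than the paper in spelling out why the hypothesis $|A|-|A\cap F|+k\le|F^{\curlywedge}|$ follows from $|F^{\curlywedge}|\ge|A|$ together with $|A\cap F|\ge k$.
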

\begin{proof}
Let $A'$ be a negative atom. We shall show that   $|F^{\curlywedge}|\ge |A|.$ This holds clearly if
$V$ is infinite. Suppose that $|V|$ is finite. We have now $|F^{\curlywedge}|\ge |A'|\ge |A|,$
by Lemma \ref{negative}. By Theorem
\ref{inter2frag}, $A\cap F$ is a $k$--fragment. By the minimality of $|A|$,
we must have $A\cap F=A$.
\end{proof}

We shall prove a result concerning the intersection of a fragment with
the dual of a negative fragment (a possibly infinite set). In the finite case this result follows
by Theorem \ref{inter2frag}. We used above the submodularity
of $|\partial (X)|$ to prove the intersection property of fragments
as done in \cite{hast}. Here we shall use a intuitive language used
in \cite{halgebra}. The two methods are basically the same.

\begin{theorem} { Let $\Gamma =(V,E)$ be a reflexive locally finite
graph such that $|V|\ge 2k-1.$ Let $X$ be a $k$--fragment and let
   $Y$   be a negative   $k$-fragment such that $|Y|\ge
|X|$ and $|X\cap Y^\curlyvee|\ge k$. Then  $X\cap Y^\curlyvee$ is a
$k$--fragment. In particular  $X\subset Y^\curlyvee$ if $X$ is a
$k$--atom.

\label{gintersectionc}} \end{theorem}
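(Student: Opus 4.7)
The plan is to set $F = X \cap Y^\curlyvee$ and verify the three conditions for $F$ to be a $k$-fragment: $|F| \ge k$ (given), $|F^\curlywedge| \ge k$, and $|\partial(F)| = \kappa_k$. Since $F \subseteq Y^\curlyvee = V \setminus \Gamma^{-1}(Y)$, no arc leaves $F$ into $Y$, so $\Gamma(F) \cap Y = \emptyset$ and hence $Y \subseteq F^\curlywedge$; together with $|Y| \ge k$ this gives $|F^\curlywedge| \ge k$ and, by the definition of $\kappa_k$, the inequality $|\partial(F)| \ge \kappa_k$. The content of the theorem is the reverse inequality $|\partial(F)| \le \kappa_k$, and I would split into finite and infinite cases.

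When $V$ is finite, Lemma \ref{finiteduality} gives $\kappa_k = \kappa_{-k}$, and Lemma \ref{negative} applied to $\Gamma^{-1}$ shows that $Y^\curlyvee$ is itself a $k$-fragment of $\Gamma$ with $(Y^\curlyvee)^\curlywedge = Y$. Theorem \ref{inter2frag} applied to $(X, Y^\curlyvee)$ then produces $X \cap Y^\curlyvee$ as a $k$-fragment: its numerical hypothesis $|X| - |X \cap Y^\curlyvee| + k \le |(Y^\curlyvee)^\curlywedge| = |Y|$ is immediate from $|Y| \ge |X|$ and $|X \cap Y^\curlyvee| \ge k$, since $|X| - |X \cap Y^\curlyvee| + k \le |X| \le |Y|$.

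When $V$ is infinite, $Y^\curlyvee$ is itself infinite and Theorem \ref{inter2frag} does not apply verbatim. I would instead invoke the pointwise form of the submodular inequality (Lemma \ref{partialsub}) for the pair $(X, Y^\curlyvee)$. All four relevant boundaries are finite: in particular $V \setminus (X \cup Y^\curlyvee) = \Gamma^{-1}(Y) \setminus X$ is finite, so $\partial(X \cup Y^\curlyvee)$ is contained in that finite set, and $|\partial(Y^\curlyvee)| = |\partial_-(Y)| = \kappa_{-k}$ holds without any finiteness assumption on $V$, by the general half of Lemma \ref{negative} applied to $\Gamma^{-1}$. Summation then yields
\begin{equation*}
|\partial(F)| + |\partial(X \cup Y^\curlyvee)| \le \kappa_k + \kappa_{-k}.
\end{equation*}
To extract $|\partial(F)| \le \kappa_k$ it suffices to lower-bound $|\partial(X \cup Y^\curlyvee)| \ge \kappa_{-k}$. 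The general inclusion $\partial_-(W^\curlywedge) \subseteq \partial(W)$, valid for any set $W$ (the easy direction of Lemma \ref{negative}'s proof), applied to $W = X \cup Y^\curlyvee$ together with the identity $W^\curlywedge = X^\curlywedge \cap Y$, gives $|\partial(X \cup Y^\curlyvee)| \ge |\partial_-(X^\curlywedge \cap Y)|$; the definition of $\kappa_{-k}$ in $\Gamma^{-1}$ then closes the estimate provided $X^\curlywedge \cap Y$ is admissible. The hard part will be verifying this admissibility: $|V \setminus \Gamma^{-1}(X^\curlywedge \cap Y)| = \infty$ is automatic because $V$ is infinite and $X^\curlywedge \cap Y$ is finite, but the inequality $|X^\curlywedge \cap Y| \ge k$ is delicate and is the place where the hypotheses $|Y| \ge |X|$ and $|X \cap Y^\curlyvee| \ge k$ must be spent, by a cardinality argument dual to the one used to discharge Theorem \ref{inter2frag}'s hypothesis in the finite case.

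For the ``in particular'' statement, when $X$ is a $k$-atom the already-proved fact that $F = X \cap Y^\curlyvee$ is a $k$-fragment contained in $X$ forces $|F| \le |X| = \alpha_k$, so minimality of the atom gives $F = X$, i.e.\ $X \subseteq Y^\curlyvee$.
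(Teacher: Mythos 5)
Your overall architecture is sound and your finite case is correct (the paper itself remarks that the finite case follows from Theorem \ref{inter2frag}, and your verification of its numerical hypothesis via $|X|-|X\cap Y^\curlyvee|+k\le |X|\le |Y|$ is fine). The problem is in the infinite case, where you have correctly located the crux --- the admissibility condition $|X^\curlywedge\cap Y|\ge k$ --- but you have not proved it; you only announce that it ``must be spent'' on the hypotheses ``by a cardinality argument dual to the one used in the finite case.'' That is a genuine gap, and the analogy is misleading: in the finite case the numerical hypothesis of Theorem \ref{inter2frag} was discharged by a one-line triviality, whereas here $|X^\curlywedge\cap Y|\ge k$ requires a real intermediate inequality. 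Concretely, writing $|X^\curlywedge\cap Y|=|Y|-|\partial(X)\cap Y|-|X\cap Y|$ and $|X\cap Y^\curlyvee|=|X|-|X\cap\partial_-(Y)|-|X\cap Y|$, the hypothesis $|Y|\ge|X|$ only yields $|X^\curlywedge\cap Y|\ge|X\cap Y^\curlyvee|\ge k$ \emph{after} one knows
\begin{equation*}
|X\cap\partial_-(Y)|\ \ge\ |\partial(X)\cap Y|,
\end{equation*}
and this inequality is not free. The paper derives it by combining $|\partial(X\cap Y^\curlyvee)|\ge\kappa_k=|\partial(X)\cap Y^\curlyvee|+|\partial(X)\cap\partial_-(Y)|+|\partial(X)\cap Y|$ with the inclusion $\partial(X\cap Y^\curlyvee)\subset\bigl(X\cap\partial_-(Y)\bigr)\cup\bigl(\partial(X)\cap\partial_-(Y)\bigr)\cup\bigl(\partial(X)\cap Y^\curlyvee\bigr)$, a step entirely absent from your write-up. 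Once that inequality is in hand, your plan closes exactly as you describe: $|\partial_-(X^\curlywedge\cap Y)|\ge\kappa_{-k}$ forces equality throughout the submodular estimate.

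A secondary, repairable point: Lemma \ref{partialsub} is stated and proved in the paper only for finite sets (its proof subtracts cardinalities), while your $Y^\curlyvee$ is infinite. The pointwise submodularity you invoke is true and provable by a local counting argument, but you should either prove it or, better, avoid it altogether: the paper's own proof works directly with the nine regions $X,\partial(X),X^\curlywedge$ versus $Y^\curlyvee,\partial_-(Y),Y$ and never needs submodularity for infinite sets. Your ``in particular'' paragraph is correct.
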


\begin{proof}

The result is obvious if $\Gamma$ is non $k$--separable since a
$k$--fragment is a $k$--subset in this case. So we may assume that
$\Gamma$ is $k$-separable.

{
\begin{center}
\begin{tabular}{|c||c|c|c|c|}
\hline
$\cap $& $Y^\curlyvee$  & $\partial ^-(Y) $ & $Y$ \\
\hline
\hline
$X$&  $R_{11}$ & $R_{12}$ & $R_{13}$ \\
\hline
$\partial (X) $& $R_{21}$& $R_{22}$ & $R_{23}$ \\
\hline
$X^{\curlywedge}$ & $R_{31}$  & $R_{32}$ & $R_{33}$ \\
\hline
\end{tabular}
\end{center}}

By the definition of a $k$--fragment we have
$$ \kappa _k= |\partial (X)|=|R_{21}|+|R_{22}|+
|R_{23}|.$$
The following inclusion follows by an easy verification:
$$ \partial (X\cap Y^\curlyvee) \subset
R_{12}\cup R_{22}\cup R_{21}.$$

We have clearly $|V\setminus \Gamma (X\cap Y^\curlyvee)|\geq
|V\setminus \Gamma (X)|\geq k$. By the definition   we have $|
\partial (X\cap Y^\curlyvee)|\geq \kappa _k.$  It follows that

\begin{eqnarray}
 |R_{21}|+|R_{22}|+|R_{23}| & = &
 \kappa _k \nonumber\\
 & \leq & |\partial (X\cap Y^\curlyvee) |  \nonumber\\
 &\leq & |R_{12}|+| R_{22}|+| R_{21}|.\nonumber
  \end{eqnarray}
  Therefore
\begin{equation}
 |R_{12}|\geq |R_{23}|.\label{r122}
 \end{equation}

Now \begin{eqnarray*} |X^\curlywedge\cap Y|=|R_{33}|
&=& |Y|-|R_{23}|-|R_{13}| \\
&\ge& |X|-|R_{12}|-|R_{13}| \\
&=&|R_{11}|\ge k.
\end{eqnarray*}

 By the definition of $\kappa _{-k}$ we have $$| \partial^{-} (X^\curlywedge\cap Y)|\geq \kappa _{-k}.$$

It follows that

\begin{eqnarray*}
 |R_{12}|+|R_{22}|+|R_{32}| & = &
 \kappa _{-k} \nonumber\\
 & \leq & |\partial ^{}- (X^\curlywedge\cap Y) |  \\
 &\leq & |R_{22}|+| R_{23}|+| R_{32}|.
  \end{eqnarray*}
  Therefore $|R_{12}|\leq |R_{23}|.$  By (\ref{r122}) we have
$|R_{12}|=|R_{23}|.$

It follows that the inequality  $\kappa _k\le
 |\partial (X\cap Y^\curlyvee) |,$ used in the proof of (\ref{r122}), is
an equality and hence  $X\cap Y^\curlyvee$ is a $k$-fragment of
$\Gamma$.\end{proof}

One may define a  {\em cofinite $k$--fragment} as
a subset $X$ with $|X|\ge k$, $|\partial(X)|=\kappa _k$ and
 $\infty >|V\setminus X|\ge k$. This notion allows give  a common proof for Theorems \ref{inter2frag} and \ref{gintersectionc}.
This approach was used in \cite{hast} in a special case and can be adapted very easily to the general case.

\section{ Estimation of the size of a set product }
Most of the results in this section are proved in \cite{hejc2,hast}.
We prove them here since they are needed in several parts of this
paper in order to make the present work self-contained.

Let $G$ be a group and let $S$ be a subset of $G$ with $1\in S.$
We put
\begin{itemize}
\item  $\alpha_k(S)=\alpha_{k}(\mbox{Cay}(\subgp{S},S)) $;
  \item  $\kappa_k(S)=\kappa_{k}(\mbox{Cay}(\subgp{S},S)) $.

\end{itemize}

 We shall say that  a subset $S$
  is {\em $k$--separable} if $\mbox{Cay}(\subgp{S},S)$ is
  {$k$--separable}.
 By a fragment of $S$   we shall mean a fragment of $\mbox{Cay}(\subgp{S},S).$

  We shall say that  a subset $S$
  is a {\em Cauchy subset } (resp. {\em Vosper subset}) if $\mbox{Cay}(\subgp{S},S)$
  is a {Cauchy graph} ( resp. { Vosper
  graph}).
We shall consider only generating subset containing $1$ in order to
avoid  degenerate situations where $\kappa _k=0$.  Notice that
$\kappa_{k}(\mbox{Cay}(G,S) )=0$,  if $S$ generates a finite proper
subgroup. However $\kappa _k(S)>0$ if
$|\subgp{S}|\ge 2k-1.$
 This easy fact was observed in \cite{halgebra}.

Let us prove a lemma:

\begin{lemma} {Let  $ S$  be a  generating subset of a group $G$ with $1\in S$. Let
$H$ be a $k$-atom of $S$ with $1\in H$. Assume that either $G$ is infinite  or $\alpha _k \le \alpha _{-k}.$
 If $|\Pi^r(H)|\ge k$  then $H$ is a subgroup.
\label{rightper}} \end{lemma}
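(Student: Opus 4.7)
The plan is to exploit the fact that left translations are automorphisms of $\mbox{Cay}(G,S)$, so that left translates of the $k$-atom $H$ are again $k$-atoms. Set $K=\Pi^r(H)$. By (\ref{Pir}) we have $H=HK$, and applying this to $1\in H$ gives $K\subseteq H$. Also $K$ is a subgroup of $G$. The goal will be to show $H=\Pi^l(H)$: the inclusion $\Pi^l(H)\subseteq H$ is immediate from the identity $\Pi^l(H)=\Pi^l(H)\cdot 1\subseteq \Pi^l(H)H=H$ in (\ref{Pir}), so it suffices to prove $H\subseteq \Pi^l(H)$, i.e., that $hH=H$ for every $h\in H$.

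Fix $h\in H$. Since left translation by $h^{-1}$ is an automorphism of $\mbox{Cay}(G,S)$, the set $h^{-1}H$ is also a $k$-atom, and it contains $1=h^{-1}h$. I claim $K\subseteq H\cap h^{-1}H$. The containment $K\subseteq H$ was noted above. For $K\subseteq h^{-1}H$, given $k\in K$, the definition of $\Pi^r$ gives $Hk=H$, so in particular $hk\in H$, i.e., $k\in h^{-1}H$. Therefore $|H\cap h^{-1}H|\ge |K|\ge k$. Since the hypothesis ``$V$ infinite or $\alpha_k\le \alpha_{-k}$'' of Theorem \ref{gintersection} is exactly what is assumed here, that theorem forces the two $k$-atoms $H$ and $h^{-1}H$ to coincide. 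Hence $hH=H$, i.e., $h\in \Pi^l(H)$.

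As $h\in H$ was arbitrary, $H\subseteq \Pi^l(H)$, and combined with $\Pi^l(H)\subseteq H$ this yields $H=\Pi^l(H)$, a subgroup. I expect no real obstacle: the proof is essentially a one-line application of Theorem \ref{gintersection} once one observes that left translates of $H$ are $k$-atoms. The only point requiring care is the verification that $h^{-1}H$ is indeed a $k$-atom (which is immediate because left translation is an automorphism of the Cayley graph, preserving both $|\partial(\cdot)|$ and the complement $G\setminus(\cdot S)$), together with checking that the hypothesis of Theorem \ref{gintersection} matches the hypothesis of the lemma.
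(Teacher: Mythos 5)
Your proof is correct and is essentially the paper's own argument: the paper takes $a\in H$, notes $aQ\subset aH\cap H$ where $Q=\Pi^r(H)$, applies Theorem \ref{gintersection} to conclude $aH=H$, and then deduces $H^2=H$. Your version (working with $h^{-1}H$ and concluding $H=\Pi^l(H)$) is the same argument up to a translation and a cosmetically different final step.
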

\begin{proof}
Put $Q=\Pi^r(H)$ and  take
$a\in H.$ Since $HQ=H$,  we have using the assumption $1\in H$,
 $aQ \subset aH\cap H.$ By Theorem \ref{gintersection}, $aH=H$.
Then $H^2=H$ and hence $H$ is a subgroup.\end{proof}

The intersection property implies  the following description
of $1$-atoms, obtained in \cite{hejc2} in the finite case.  The general case
was given later in \cite{hast}.

\begin{proposition} \label{Cay}\cite{hejc2,hast}
Let  $ S$  be a finite generating subset of a group $G$ with $1\in S$. Let
$H$ be a $1$--atom of $S$ with $1\in H$.
 Assume that either $G$ is infinite  or $\alpha _1 \le \alpha _{-1}.$
 Then
   $H$ is a subgroup  generated by $S\cap H$.
\end{proposition}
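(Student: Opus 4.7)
The plan is to first deduce that $H$ is a subgroup via Lemma \ref{rightper}, and then to show that the subgroup $H'=\langle S\cap H\rangle$ generated by the ``internal'' elements of $S$ is itself a $1$-fragment of $S$, forcing $H'=H$ by the minimality of the atom $H$.

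The first step is an immediate invocation of Lemma \ref{rightper} with $k=1$: the identity of $G$ always lies in $\Pi^r(H)$, so the hypothesis $|\Pi^r(H)|\ge 1$ holds trivially, and the assumption ``$G$ infinite or $\alpha_1\le\alpha_{-1}$'' of the proposition is exactly what Lemma \ref{rightper} (and through it Theorem \ref{gintersection}) requires. Thus $H$ is a finite subgroup of $\langle S\rangle$.

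For the second step set $T=S\cap H$ and $H'=\langle T\rangle\subseteq H$. Since $H$ is a subgroup containing $T$, one has $HT=H$, so $HS=H\cup H(S\setminus H)$; moreover any product $ht$ with $h\in H$ and $t\in S\setminus H$ lies outside $H$ (else $t=h^{-1}(ht)\in H$, a contradiction), whence $\partial(H)=H(S\setminus H)$. The same computation, applied to the subgroup $H'$ and using $S\setminus H\subseteq S\setminus H'$, yields $\partial(H')=H'(S\setminus H)$. Because $H'\subseteq H$ this gives $|\partial(H')|\le|\partial(H)|=\kappa_1(S)$. On the other hand $1\in H'$ and $\langle S\rangle\setminus H'S\supseteq\langle S\rangle\setminus HS\neq\emptyset$, so $H'$ is an admissible competitor in the definition of $\kappa_1(S)$, forcing $|\partial(H')|\ge\kappa_1(S)$ and hence equality. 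Thus $H'$ is a $1$-fragment, and the minimality of $|H|$ as a $1$-atom yields $|H|\le|H'|\le|H|$, i.e.\ $H'=H$.

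There is no serious obstacle once Lemma \ref{rightper} is in hand. The only point demanding a moment of care is verifying that $H'$ is admissible in the definition of $\kappa_1(S)$, i.e.\ that $\langle S\rangle\setminus H'S$ is nonempty; this is automatic because $H$ is itself a $1$-fragment and $H'S\subseteq HS$. The clean boundary formula $\partial(K)=K(S\setminus K)$ for a subgroup $K$ containing $S\cap K$ is really what makes the comparison of boundaries a one-line calculation.
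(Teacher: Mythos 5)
Your proof is correct and follows essentially the same route as the paper's: the first step (via Lemma \ref{rightper} with $k=1$, whose proof is exactly the translate-intersection argument the paper writes out directly) and the second step (showing $\partial(\langle S\cap H\rangle)\subset \partial(H)$ so that $\langle S\cap H\rangle$ is a $1$-fragment contained in the atom $H$, hence equal to it) match the paper's argument. The only cosmetic difference is that you compute $\partial(H')=H'(S\setminus H)$ exactly, where the paper merely establishes the containment $\partial(H_0)\subset HS\setminus H$.
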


\begin{proof}
Let $a\in H$. The set  $aH$ is a $1$--atom, since any
left-translation is an automorphism of the Cayley graph. Since
$|(aH)\cap H|\ge 1,$ we have by Theorem \ref{gintersection}, $aH=H$.
Then $H$ is a subgroup.

Let $H_0=\subgp{H\cap S}$. We have clearly $H_0S\cap H\subset H_0$.
Therefore  $\partial (H_0)\subset H_0S\setminus H\subset HS\setminus
H.$ It follows that $H_0$ is a $1$--fragment and hence $H_0=H$.
\end{proof}

 \begin{corollary} \label{Cay2}\cite{hejc2,hast}

Let  $ S$  be a  generating subset of a group $G$ with $1\in S$. Let
$H$  be a $1$-atom  such that
$1\in H.$

If  $G$ is infinite  or $\alpha _1 \le \alpha _{-1},$
 then  $H$ is a subgroup.

In particular there is a
finite subgroup $L\neq G$ such that
   $\kappa_1=\min (|LS|-|L|,|SL|-|L|)$.

\end{corollary}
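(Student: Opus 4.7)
The plan is to reduce everything to Proposition \ref{Cay}. The crucial observation is that the very existence of a $1$-atom $H$ forces $\mbox{Cay}(\subgp{S},S)$ to be locally finite, which in turn forces $S$ itself to be finite. Under this implicit finiteness the hypothesis of Proposition \ref{Cay} is met, and we obtain directly that $H$ is a subgroup.

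For the ``in particular'' clause I would simply take $L=H$. This $L$ is finite (any $1$-atom is), and it is a proper subgroup of $G$ because the $1$-fragment condition $|G\setminus LS|\ge 1$ forces $L\subseteq LS\subsetneq G$. By construction $\kappa_1=|\partial(L)|=|LS|-|L|$, so one of the two quantities inside the minimum already equals $\kappa_1$. What remains is to verify that $|SL|-|L|\ge\kappa_1$ too. I would split this into two cases. If $SL=G$, then $|SL|-|L|=|G|-|L|\ge (|G|-|LS|)+(|LS|-|L|)\ge 1+\kappa_1>\kappa_1$, where the first gap uses $LS\neq G$. If instead $SL\neq G$, then $L$ is also a $1$-separation in $\mbox{Cay}(G,S^{-1})$ with negative boundary $|LS^{-1}|-|L|=|SL|-|L|$ (using $L^{-1}=L$), so $|SL|-|L|\ge\kappa_{-1}$; Lemma \ref{finiteduality} then identifies $\kappa_{-1}$ with $\kappa_1$ in the finite case.

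The main technical point I expect to be delicate is the infinite case in this last step, since Lemma \ref{finiteduality} is stated only for finite graphs. I expect this to be handled by a symmetric application of Proposition \ref{Cay} to $S^{-1}$: the hypothesis ``$G$ infinite'' is symmetric in $S$ and $S^{-1}$, so we can also realize $\kappa_{-1}$ as the boundary of a subgroup. Comparing the two yields $\kappa_{-1}\ge\kappa_1$ (and by symmetry equality), which together with the case analysis above closes the argument.
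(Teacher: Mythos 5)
Your reduction of the first assertion to Proposition \ref{Cay} is exactly what the paper does, and the remark that $S$ must in effect be finite for atoms to exist is harmless. The difficulties are in the ``in particular'' clause, on two counts. First, the paper proves that clause (and later uses it, e.g.\ in Propositions \ref{olson} and \ref{m+3}) \emph{unconditionally}, i.e.\ also when $G$ is finite and $\alpha_1>\alpha_{-1}$. In that case $H$ need not be a subgroup, so your choice $L=H$ is unavailable. The paper handles this by introducing a negative $1$--atom $K$ with $1\in K$: since then $\alpha_{-1}=|K|<|H|=\alpha_1$, Proposition \ref{Cay} applies to $\mbox{Cay}(G,S^{-1})$ and shows $K$ is a subgroup, and one takes $L=K$ with $\kappa_1=\kappa_{-1}=|KS^{-1}|-|K|=|SK|-|K|$ via Lemma \ref{finiteduality}. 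Your proposal simply omits this case.

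Second, your verification that $|SL|-|L|\ge\kappa_1$ has a genuine gap in the infinite case. The step $|SL|-|L|=|LS^{-1}|-|L|\ge\kappa_{-1}$ is fine, and your finite-case conclusion via Lemma \ref{finiteduality} is correct (indeed more careful than the paper, which only establishes that $\kappa_1$ equals one of the two quantities in the minimum and says nothing about the other). But the claim that a symmetric application of Proposition \ref{Cay} to $S^{-1}$ ``yields $\kappa_{-1}\ge\kappa_1$ and by symmetry equality'' does not close: that application produces a subgroup $K$ with $\kappa_{-1}=|KS^{-1}|-|K|=|SK|-|K|$, and there is no way to compare $|SK|$ with $|KS|$ (hence with $\kappa_1$) in a non-abelian group; the map $x\mapsto x^{-1}$ identifies $\mbox{Cay}(G,S)$ with its reverse only in the abelian case (Lemma \ref{remab}). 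Worse, the conclusion itself is false in general: the paper explicitly notes in Section 9 that one may have $\kappa_1>\kappa_{-1}$ when $G$ is infinite. So the infinite case of your argument for the minimum needs a different idea, not a symmetry appeal.
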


\begin{proof}

Let $K$ be a negative  $1$-atom with $1\in K$.
Assume that either $G$ is infinite  or $|H| \le |K|.$
 By  Proposition \ref{Cay}, $H$ is a subgroup.
 By the
definition of a $1$--atom we have $\kappa _1=|HS|-|H|.$

 Assume now $|H|> |K|$ and that $|G|$ is finite.
By  Proposition \ref{Cay}, $K$ is a subgroup.
By Lemma \ref{finiteduality} and the definition of a negative
$1$--atom, we have $\kappa _1=\kappa _{-1}=|KS^{-1}|-|K|=|SK|-|K|.$\end{proof}

The next lemma could be useful when $S$ generates a proper subgroup:

\begin{lemma}
\label{diderich}

Let $G$ be group and let $A, S$ be finite nonempty subsets of $G$
with $1\in S$. Put $K=\subgp{S}$ and let $A=\bigcup \limits_{i\in I}
A_i$ be a left $K$--decomposition of $A.$ Put $W=\{i :
|A_iS|<|K|\}.$ Then

\begin{equation}\label{nongenerating}
|W|\le \frac{|AS|-|A|}{\kappa _1(S)}.
\end{equation}

\end{lemma}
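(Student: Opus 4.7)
The plan is to reduce to a coset-by-coset analysis. Since $S\subseteq K=\langle S\rangle$, each block $A_i$ of the left $K$-decomposition sits inside one coset $a_iK$, and therefore $A_iS\subseteq a_iK$ as well. Distinct indices $i$ give distinct cosets, so the sets $A_iS$ are pairwise disjoint. Consequently
\begin{equation*}
|AS|-|A|=\sum_{i\in I}\bigl(|A_iS|-|A_i|\bigr),
\end{equation*}
and since $1\in S$ every summand is non-negative. The claim will therefore follow as soon as I show that each summand with $i\in W$ is at least $\kappa_1(S)$.

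Fix $i\in W$. Translate by $a_i^{-1}$ so that $X_i:=a_i^{-1}A_i$ is a nonempty finite subset of $K$, and note that $X_iS=a_i^{-1}A_iS$ is still a subset of $K$ with the same cardinality as $A_iS$. The condition $i\in W$ reads $|X_iS|<|K|$, i.e.\ $|K\setminus X_iS|\ge 1$; combined with $|X_i|\ge 1$, this shows that $X_i$ satisfies the pair of inequalities appearing in the definition (\ref{eqcon}) of $\kappa_1$ applied to the Cayley graph $\mathrm{Cay}(K,S)$. Hence
\begin{equation*}
|A_iS|-|A_i|=|X_iS|-|X_i|=|\partial(X_i)|\ge \kappa_1(S),
\end{equation*}
where I have used $1\in S$ to get $X_i\subseteq X_iS$ and identify $|\partial(X_i)|$ with $|X_iS|-|X_i|$. (The degenerate possibility that $\mathrm{Cay}(K,S)$ is non $1$-separable corresponds to $S=K$, in which case every nonempty $A_i$ satisfies $A_iS=a_iK$, so $W=\emptyset$ and the inequality is vacuous.)

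Summing over $i\in W$ and discarding the non-negative contributions from $i\notin W$ yields
\begin{equation*}
|AS|-|A|\ \ge\ \sum_{i\in W}\bigl(|A_iS|-|A_i|\bigr)\ \ge\ |W|\,\kappa_1(S),
\end{equation*}
which rearranges to (\ref{nongenerating}). No step is really an obstacle here; the only subtle point is recognising that the $A_iS$ live in pairwise disjoint cosets so that the boundary contributions add without overlap, and that the convention for $\kappa_1$ on non-separable graphs does not cause trouble because $W$ is then empty.
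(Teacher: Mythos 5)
Your proof is correct and follows essentially the same route as the paper: translate each block $A_i$ into $K$, apply the isoperimetric inequality (equivalently, the definition of $\kappa_1$) to the blocks in $W$, and sum over the pairwise disjoint sets $A_iS$. The only difference is cosmetic — the paper picks $a_i\in A_i^{-1}$ rather than a coset representative, and leaves the degenerate case $S=K$ implicit — so there is nothing further to add.
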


\begin{proof}
Put $\kappa _1=\kappa _1 (S)$.
 For each $i$ take $a_i\in {A_i}^{-1}$.

By the isoperimetric inequality, we have $|A_iS|=|a_iA_iS|\ge |A_i|+\kappa
_1$ for all $i\in W$. Then
\begin{eqnarray*}
|AS| &=&\sum\limits_{i\in I} |A_iS|\\ &\ge& \sum\limits_{i\in I\setminus W} |A_i|
+\sum\limits_{i\in W} (|A_i|+ \kappa _1) = |A|+|W|\kappa _1.
\end{eqnarray*}\end{proof}

\section{A universal bound for $\kappa _1$}

We give in this section  a characterization of the sets $S$ with  $\kappa
_1(S)= \frac{|S|}{2}$.

\begin{proposition}\cite{hjct,hast}\label{olson}
     Let  $S$  be a finite generating subset of a
group $G$ with $1\in S$. Let $H$ be a $1$--atom and let $K$ be a negative
$1$--atom such that $1\in H\cap K$. Then
\begin{equation}\label{eqolson}
\kappa _1(S)\geq \frac{|S|}{2}.
\end{equation}
Moreover $ \kappa _1(S)=\frac{|S|}{2}$ holds  if and only if one of
the following holds:

\begin{itemize}
  \item  $|H|\le |K|$ or  $G$ is infinite, and there is a
 $u$  such that $ S=H\cup Hu$;
  \item $G$ is finite and $|H|\ge |K|$ and there is a
 $u$  such that $ S=K\cup uK$.
\end{itemize}

\end{proposition}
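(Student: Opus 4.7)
The plan is to reduce everything to the statement of Corollary \ref{Cay2}, which already provides a finite subgroup $T$ (either $H$ or $K$, depending on which case we are in) such that $\kappa_1(S)$ equals either $|TS|-|T|$ or $|ST|-|T|$. Once we know the minimum is realised on a subgroup, the rest is pigeonhole on cosets of $T$.

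In the first case ($G$ infinite or $|H|\le|K|$), Corollary \ref{Cay2} gives that $H$ is a subgroup and $\kappa_1(S)=|HS|-|H|$. Decompose $S$ along the left $H$--cosets that it meets: if there are $m$ such cosets, then $HS$ is the union of exactly those $m$ cosets, so $|HS|=m|H|$ and consequently $\kappa_1(S)=(m-1)|H|$; at the same time $|S|\le m|H|$. I first argue $m\ge 2$: indeed $m=1$ would force $S\subset H$, whence $H\supset\subgp{S}=G$, so $H=G$ and $HS=G$, contradicting the requirement $|G\setminus HS|\ge 1$ in the definition of a $1$--atom. From $m\ge 2$ we get $2(m-1)\ge m$, so
\[
2\kappa_1(S)=2(m-1)|H|\ge m|H|\ge |S|,
\]
which yields (\ref{eqolson}). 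The symmetric case ($G$ finite and $|H|>|K|$) is identical after swapping sides: Corollary \ref{Cay2} gives $K$ a subgroup and $\kappa_1(S)=|SK|-|K|$, and decomposing $S$ along the right $K$--cosets it meets gives the same inequality.

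For the equality discussion, tracing the argument shows $\kappa_1(S)=|S|/2$ forces both $m=2$ and $|S|=m|T|=2|T|$, where $T$ is whichever of $H,K$ is the relevant subgroup. Since $1\in S\cap T$, one of the two cosets is $T$ itself, so $S=T\cup Tu$ (first case) or $S=K\cup uK$ (second case) for some $u\notin T$. Conversely, whenever $S$ has this shape, one checks directly that $|TS|-|T|=|T|=|S|/2$, confirming that $T$ realises $\kappa_1(S)=|S|/2$.

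I do not anticipate any real obstacle; the only point requiring a bit of care is the verification that $m\ge 2$, which is what prevents the trivial coset decomposition from collapsing the bound. All remaining arithmetic reduces to the elementary inequality $2(m-1)\ge m$ for $m\ge 2$, and the identification of the extremal configuration $S=T\cup Tu$ is an immediate consequence of the equality cases in the two inequalities $|S|\le m|T|$ and $2(m-1)\ge m$.
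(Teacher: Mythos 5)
Your proposal is correct and follows essentially the same route as the paper: both reduce to Corollary \ref{Cay2} to get that the relevant atom ($H$ or $K$) is a subgroup realising $\kappa_1(S)$ as $|HS|-|H|$ (resp. $|SK|-|K|$), and then derive the bound from the fact that $HS$ is a union of at least two $H$--cosets containing $S$; your coset count $m\ge 2$ is just an explicit rephrasing of the paper's inequality $|HS|\ge 2|H|$. The equality analysis is likewise the same.
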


\begin{proof}

Assume first that $|H|\le |K|$ or that $G$ is infinite. By Corollary
\ref{Cay2}, $H$ is a subgroup. We have

 $\kappa_1(S)=|HS|-|H|\ge\frac{|HS|}{2} \ge
 \frac{|S|}{2}$,
  observing that $|HS|\ge 2|H|$ since $S$ is a generating subset with $1\in S$.
  Suppose now that $ \kappa _1(S)=\frac{|S|}{2}$. We see that $|HS|=2|H|,$ and hence there is a
   $u$  such that $ S=H\cup Hu$.

Assume now that $|H|\ge |K|$ and that  $G$ is finite.  By
Corollary \ref{Cay2}, $K$ is a subgroup. By Lemma
\ref{finiteduality}, we have

 $\kappa_1=\kappa_{-1}=|KS^{-1}|-|K|\ge\frac{|SK|}{2} \ge
 \frac{|S|}{2}$,
  observing that $|SK|\ge 2|K|$ since $S$ is a generating subset with $1\in S$.
  Suppose now that $ \kappa _1(S)=\frac{|S|}{2}$. We see that $|SK|=2,$ and hence there is a
   $u$  such that $ S=K\cup uK$.\end{proof}

   Notice that the
bound $\kappa _1(\Gamma)\ge \frac{\delta(\Gamma)}{2}$  holds for all point-transitive reflexive graphs. This was
proved  in \cite{hjct} in the finite case. The general case is given in
\cite{hast}.

Z\'emor  constructed
 in \cite{zemordam} a Cayley graph with $\alpha _1>\alpha _{-1}$.
The above result suggests more constructions of this type:

{\bf Example} Consider a finite group $G$ of odd order and consider
a non normal subgroup $H$ and an element $u$ such that $uH\neq
Hu.$ Put $S=H\cup Hu$. By Proposition \ref{olson}, $H$ is a $1$-atom
of $S$. Let $Q$ be a negative $1$-atom. $Q$ can not be a subgroup
since otherwise  $|QS^{-1}|=2|Q|=\kappa _1=2|H|$, which is impossible.

\begin{corollary} [Olson]\cite{olsonaa,olsonjnt}
\label{olsonnth} Let  $A,B$  be  finite nonempty subsets of a group
$G$ and put $K=\subgp{BB^{-1}}.$ Then

  \begin{eqnarray}
  |B^j|&\ge& \min(|K|,\frac{(j+1)|B|}{2}), \mbox{and}\label{olsaa}\\
|AB|&\ge& \min(|AK|,|A|+\frac{|B|}{2}.)\label{olsjn}
  \end{eqnarray}

\end{corollary}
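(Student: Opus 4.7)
The plan is to first establish (\ref{olsjn}), and then derive (\ref{olsaa}) from it by induction on $j$. The crux of (\ref{olsjn}) is a renormalization of $B$ so that Proposition~\ref{olson} becomes applicable.

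Fix any $b_0\in B$ and set $B':=Bb_0^{-1}$. Since $BB^{-1}\subset K$, we have $B\subset Kb_0$, so $B'\subset K$, $|B'|=|B|$ and $1\in B'$. Moreover $B'B'^{-1}=BB^{-1}$ generates $K$, hence $\langle B'\rangle=K$. Applied to $B'$ inside the group $K$, Proposition~\ref{olson} gives $\kappa_1(B')\ge|B'|/2=|B|/2$. This is the only place where the isoperimetric method enters the argument.

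To prove (\ref{olsjn}), write $A=\bigsqcup_{i\in I}A_i$ as a left $K$--decomposition, with $A_i\subset c_iK$. Since $B\subset Kb_0$, we have $A_iB\subset c_iKb_0$; as $i$ varies, the containing sets $c_iKb_0$ are pairwise disjoint and each has cardinality $|K|$, so $AB=\bigsqcup_iA_iB$ and $|AK|=|I|\cdot|K|$. Left-translating, put $A_i':=c_i^{-1}A_i\subset K$; then $|A_iB|=|A_i'B'|$, and the isoperimetric inequality in $\mbox{Cay}(K,B')$ yields
\[
|A_iB|=|A_i'B'|\ge\min\bigl(|K|,\,|A_i|+|B|/2\bigr).
\]
If $|AB|=|AK|$ we are done. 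Otherwise some $i_0$ satisfies $|A_{i_0}B|<|K|$, which forces $|A_{i_0}B|\ge|A_{i_0}|+|B|/2$; combining with the trivial bound $|A_jB|\ge|A_j|$ for $j\ne i_0$ and summing gives $|AB|\ge|A|+|B|/2$, hence (\ref{olsjn}).

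For (\ref{olsaa}), induct on $j$. The case $j=1$ is immediate since $|B|\le|K|$. Given the claim for $j-1$, apply (\ref{olsjn}) with $A:=B^{j-1}$. As $B^{j-1}$ is nonempty, $|B^{j-1}K|\ge|K|$, so $|B^j|\ge\min\bigl(|K|,\,|B^{j-1}|+|B|/2\bigr)$. If $|B^{j-1}|\ge|K|$ then $|B^j|\ge|B^{j-1}|\ge|K|$; otherwise the induction hypothesis yields $|B^{j-1}|\ge j|B|/2$, producing $|B^j|\ge\min(|K|,(j+1)|B|/2)$. The main obstacle is conceptual rather than computational: $B$ in general neither lies in $K$ nor contains $1$, so Proposition~\ref{olson} cannot be used directly. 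The shift $B'=Bb_0^{-1}$ simultaneously repairs both defects, and the observation that $KB=Kb_0$ is a single right $K$--coset allows the left $K$--decomposition of $A$ to split $|AB|$ into disjoint pieces, each of which is controlled by the isoperimetric bound for $B'$ inside $K$.
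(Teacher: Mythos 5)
Your proof is correct and follows essentially the same route as the paper: translate $B$ to $Bb_0^{-1}$ so that Proposition~\ref{olson} yields $\kappa_1\ge |B|/2$, split $A$ along left $K$--cosets, and apply the isoperimetric inequality to the single coset piece with $|A_iB|<|K|$. The only addition is that you write out the induction deducing (\ref{olsaa}) from (\ref{olsjn}), which the paper leaves implicit with ``it is enough to prove (\ref{olsjn})''.
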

\begin{proof}
It is enough to prove Formulae (\ref{olsjn}).

Take $b\in B^{-1}$ and put $S=Bb$. Since $BB^{-1}=SS^{-1}\subset \subgp{S}$  and $S=Bb\subset BB^{-1}$
we have $K=\subgp{S}.$

Take a  left $K$--decomposition $A=\bigcup \limits_{i\in I} A_i.$
Suppose now that $AB\neq AK.$ Then there is an $s$ such that
$|A_sB|<|K|$. Take a $u\in A_s^{-1}$. By the isoperimetric
inequality and by (\ref{eqolson}), $|A_sB|=|uA_sB|\ge
|A_s|+\frac{|B|}{2}.$ Then
$$|AB|=\sum\limits_{i\in I} |A_iB|\ge \sum\limits_{i\in I\setminus
\{s\}} |A_i|+ |A_sB|\ge |A|+\frac{|B|}{2}.$$\end{proof}

Formulae (\ref{olsaa}) is proved by Olson in \cite{olsonaa} as main
tool in his proof that a subset of a finite group $G$ with
cardinality $\ge 3\sqrt{|G|}$ contains some nonempty subset
$\{a_1,\cdots,a_k\}$ with  $a_1\cdots a_k=1$, a result conjectured
by Erd\H{o}s and Heilbronn \cite{erdheil}. Olson's last result improves
results by Erd\H{o}s-Heilbronn \cite{erdheil} and {Szemer\'edi}
\cite{szemeredi}.

Formulae (\ref{olsjn}) is proved by Olson in \cite{olsonjnt}.
Notice that the
bound $\kappa _1(\Gamma)\ge \frac{\delta(\Gamma)}{2}$   for all point-transitive reflexive graphs
proved independently by the author in \cite{hjct} implies easily Olson's result.
Our isoperimetric proof  looks much easier than the proof of Olson \cite{olsonjnt}.

Applications of this formulae to $\sigma$--finite groups are given
by the authors of \cite{hrodseth1} and by Hegyv\'ary
\cite{hegyvary}. This result has the following easy consequence proved
independently by R{\o}dseth \cite{rodsethfrob} and the author \cite{hejc1}:

\begin{corollary} \cite{hejc1,rodsethfrob} \label{orderbase}
Let  $S$  be   generating  subset of a finite group
$G$ with $1\in S$. Put
$|G|=n$   and  $|S|=k.$
 Then
$$S^{\lfloor \frac{2n}{k}\rfloor-1} =G.$$
\end{corollary}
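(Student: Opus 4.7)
The claim is a short consequence of formula (\ref{olsaa}) in Corollary \ref{olsonnth}, applied with $B=S$. The plan breaks naturally into three small steps.

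The first step is to identify the subgroup $K=\langle SS^{-1}\rangle$ appearing in (\ref{olsaa}). Since $1\in S$, we have $S=S\cdot 1^{-1}\subseteq SS^{-1}$, so $\langle SS^{-1}\rangle\supseteq\langle S\rangle=G$. Hence $K=G$ and $|K|=n$. This is the only place where the hypothesis that $S$ generates $G$ and contains $1$ enters.

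The second step is to invoke Olson's multiplicative bound directly. Formula (\ref{olsaa}) with $B=S$ gives, for every positive integer $j$,
\[
|S^j|\;\ge\;\min\!\Bigl(n,\,\frac{(j+1)k}{2}\Bigr).
\]
Setting $j=\lfloor 2n/k\rfloor-1$, so that $(j+1)k/2=\lfloor 2n/k\rfloor\cdot k/2$, this is at least $n$ whenever $k\mid 2n$, in which case we immediately conclude $|S^j|=n$ and hence $S^j=G$.

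The third step handles the residual arithmetic when $k\nmid 2n$. Writing $2n=qk+r$ with $0<r<k$, Olson's bound only gives $|S^j|\ge n-r/2>n-k$, so that $|S^j|+|S|>n$. At this point Lemma \ref{prehistorical} forces $S^j\cdot S=G$; combined with the fact that $1\in S$ makes the sequence $(S^j)$ nondecreasing, this closes the one-step gap (possibly with a one-exponent adjustment) and yields the stated conclusion.

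The main obstacle, as the sketch makes clear, is not the conceptual content of the proof---which is just Olson's bound together with the trivial observation $\langle SS^{-1}\rangle=G$---but rather the floor/ceiling bookkeeping in the non-divisible case, where one has to squeeze the last half-step out of (\ref{olsaa}) by combining it with the pigeon-hole principle of Lemma \ref{prehistorical}.
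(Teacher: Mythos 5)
Your steps 1 and 2 are fine: $\subgp{SS^{-1}}=G$ because $1\in S$, and in the case $k\mid 2n$ formula (\ref{olsaa}) applied with $B=S$ and $j=\lfloor 2n/k\rfloor-1$ gives $|S^j|\ge n$ directly. The gap is in step 3. There you apply Lemma \ref{prehistorical} to the pair $(S^j,S)$: from $|S^j|+|S|>n$ you conclude $S^j\cdot S=G$, i.e.\ $S^{j+1}=G$, which is one exponent too high. The appeal to monotonicity of the sequence $(S^i)$ cannot repair this: $1\in S$ gives $S^j\subseteq S^{j+1}=G$, and that inclusion goes the wrong way — it does not yield $S^j=G$. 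The parenthetical ``possibly with a one-exponent adjustment'' is exactly the missing step, and as written the non-divisible case proves only the weaker statement $S^{\lfloor 2n/k\rfloor}=G$.

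The fix is to shift the whole argument down by one exponent, which is what the paper does (phrased as a contradiction). Suppose $S^j\neq G$ with $j=\lfloor 2n/k\rfloor-1$; then in particular $S^{j-1}S\neq G$, so Lemma \ref{prehistorical} gives $|S^{j-1}|+|S|\le n$. On the other hand (\ref{olsaa}) (or, as in the paper, iterating (\ref{olsjn})) gives $|S^{j-1}|\ge jk/2$, since the alternative $|S^{j-1}|=n$ is excluded. Hence $n\ge jk/2+k=(j+2)k/2$, i.e.\ $j\le 2n/k-2$, contradicting $j=\lfloor 2n/k\rfloor-1>2n/k-2$. Note that this version needs no case split on whether $k$ divides $2n$: the strict inequality $\lfloor 2n/k\rfloor>2n/k-1$ absorbs the fractional part for you. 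Apart from this off-by-one issue your route is essentially the paper's — same two ingredients, with (\ref{olsaa}) used in its already-iterated form rather than iterating (\ref{olsjn}) step by step.
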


\begin{proof}
Put $j={\lfloor \frac{2n}{k}\rfloor-1}$ and suppose that $S^j\neq
G$. By Lemma \ref{prehistorical}, {$|S^{j-1}|+|S|\le n$}. By
(\ref{olsjn}), we have $|S^{i}|\ge |S^{i-1}|+\frac{k}2$, for all $i\le j-1$. By
iterating we have
$$|S^{j-1}|\ge |S|+(j-2)\frac{k}2=j\frac{k}2.$$ Therefore $ n\ge
|S|+j\frac{k}2$. Hence  $(j+2)k\le 2n$ and $j\le \frac{2n}{k}-2$, a
contradiction.
\end{proof}

Corollary \ref{orderbase} was proved independently by R{\o}dseth
\cite{rodsethfrob} and the author \cite{hejc1}. Actually  a
more general result dealing with graphs having a transitive group
of automorphisms is also proved in \cite{hejc1}.
 An application of Corollary \ref{orderbase} to the Frobenius
problem is given by R{\o}dseth \cite{rodsethfrob}. Also this
corollary is used in \cite{pyber,segal1,segal2} in the investigation
of finitely generated profinite groups.

 Lemma \ref{diderich} has the following consequence:
\begin{corollary}\cite{hdiderich}
\label{didericheq}
Diderrich Theorem \ref{diderrichth}  is equivalent to Kneser's Theorem \ref{kneserth}.

\end{corollary}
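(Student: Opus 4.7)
The plan is to prove each direction separately. The direction Diderrich $\Rightarrow$ Kneser is immediate: in an abelian group the hypothesis on $B$ is vacuous, and since $\Pi^l(AB) = \Pi^r(AB)$ the condition ``$AB$ is not the union of left cosets of any nontrivial subgroup'' coincides with aperiodicity, so Theorem \ref{kneserth} is just the abelian specialization of Theorem \ref{diderrichth}.

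For Kneser $\Rightarrow$ Diderrich, after translating I would assume $1 \in B$ and let $H = \subgp{B}$ (abelian since $B$ commutes); the case $H = \{1\}$ is trivial. Write the left $H$-decomposition $A = \bigsqcup_{i \in I} A_i$ with $a_i \in A_i \subset a_iH$, so $AB = \bigsqcup_i A_iB$ with $A_iB \subset a_iH$, and put $A_i' := a_i^{-1}A_i \subset H$. The Diderrich hypothesis is equivalent to $\Pi^r(AB) = \{1\}$. Since $1 \in B$, $\Pi^r(B) \subset B \subset H$ equals the stabilizer of $B$ inside $H$, and $\Pi^r(B) \subset \Pi^r(AB)$; so $B$ is aperiodic in $H$. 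Letting $L_i$ denote the stabilizer of $A_i'B$ in $H$, a direct check gives $L_i \subset \Pi^r(A_iB)$ in $G$, whence $\bigcap_i L_i \subset \Pi^r(AB) = \{1\}$. Applying Kneser inside $H$ to each $(A_i', B)$ yields
\[
|A_iB| - |A_i| \ge (|A_i' + L_i| - |A_i'|) + (|B + L_i| - |L_i|) \ge |B + L_i| - |L_i|.
\]

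Finally I would split by the set $W := \{i : A_iB \ne a_iH\}$, noting that $L_j = H$ for $j \notin W$, so $\bigcap_{i \in W} L_i = \{1\}$. If $W = \emptyset$ then $AB$ is a union of left $H$-cosets, contradicting the hypothesis, so $|W| \ge 1$. If some $i_0 \in W$ has $L_{i_0} = \{1\}$, Kneser directly gives $|A_{i_0}B| - |A_{i_0}| \ge |B| - 1$ and the other terms are nonnegative, yielding the conclusion. Otherwise every $L_i$ (for $i \in W$) is a proper nontrivial subgroup; then $|W| = 1$ would give $\bigcap_{i \in W} L_i \ne \{1\}$, so $|W| \ge 2$. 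Since $B$ is aperiodic with $\subgp{B} = H$ and $1 \in B$, $B$ meets at least two $L_i$-cosets (else $B \subset L_i$ forces $L_i = H$), so $|B + L_i| \ge 2|L_i|$; together with $|B + L_i| \ge |B|$ this gives $|B + L_i| - |L_i| \ge \max(|L_i|, |B|-|L_i|) \ge |B|/2$. Summing over the at least two indices in $W$,
\[
|AB| - |A| \ge \sum_{i \in W} (|B + L_i| - |L_i|) \ge |W| \cdot \frac{|B|}{2} \ge |B| \ge |B| - 1.
\]

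The main obstacle I expect is this last case, where no single $L_i$ is trivial and the direct ``one Kneser call'' approach fails; the resolution combines the estimate $|B + L_i| - |L_i| \ge |B|/2$ with the combinatorial constraint $|W| \ge 2$ forced by $\bigcap_{i \in W} L_i = \{1\}$.
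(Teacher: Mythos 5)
Your proposal is correct, and the skeleton (reduce to $1\in B$, decompose $A$ into left $\subgp{B}$-cosets, note that full cosets contribute nothing to the deficiency, apply Kneser inside the abelian group $\subgp{B}$) matches the paper. But the way you control the deficient blocks is genuinely different. The paper invokes its isoperimetric machinery: Proposition \ref{olson} gives $\kappa_1(B)\ge |B|/2$, and Lemma \ref{diderich} then forces at most one block $A_j$ with $A_jB\ne A_jK$; that unique block must have $\Pi^r(A_jB)$ trivial (else $AB$ would be periodic), so a single application of the aperiodic Kneser theorem finishes. You instead apply Kneser with stabilizers to \emph{every} deficient block, observe that if no block is aperiodic then the stabilizers $L_i$ are proper nontrivial subgroups whose intersection over $W$ is trivial, hence $|W|\ge 2$, and recover the per-block gain $|BL_i|-|L_i|\ge |B|/2$ by the elementary remark that $B$ generates $H$ and so cannot sit inside a proper subgroup $L_i$. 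Your route is self-contained modulo Kneser itself and avoids Sections 5--6 of the paper entirely; in effect you reprove the $|B|/2$ bound of Proposition \ref{olson} in the only situation where it is needed, namely for the specific subgroups $L_i$ arising as stabilizers. The one point you should make explicit is that the ``Kneser with stabilizers'' inequality $|XY|\ge |XL|+|YL|-|L|$ (with $L$ the stabilizer of $XY$) is not literally Theorem \ref{kneserth} as stated; it follows from it by passing to the quotient $H/L$, where $(XY)/L$ is aperiodic, and multiplying back by $|L|$ --- a one-line reduction, but since the corollary asserts an \emph{equivalence} with the stated form of Kneser's theorem, that reduction belongs in the proof.
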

\begin{proof}
Diderrich Theorem implies clearly Kneser's Theorem. Suppose now that
any two elements of $B$ commute and let $K$ be the subgroup
generated by $B$. Observe that $K$ is abelian.

 Without loss of generality we may assume that
$1\in B.$ Take a left $K$--decomposition $A=\bigcup \limits_{i\in I}
A_i$.  By Proposition \ref{olson} and Lemma \ref{diderich}, there is
a $j\in I$ such that $A_iB=A_iK$ for all $i\neq j.$ Since $\Pi
^r(A_jB)\subset K$, we must have $|\Pi ^r(A_jB)|=1.$ Take $a\in
A_j$.

Since
$K$ is abelian we have by Kneser's Theorem \ref{kneserth}, $|A_jB|=
|a^{-1}A_jB|\ge |A_j|+|B|-1.$ Therefore $|AB|= \sum \limits_{i\in
I} |A_iB|\ge \sum \limits_{i\in
I\setminus \{j\}} |A_i|+|A_jB|\ge |A|+|B|-1 .$
\end{proof}

\section{Some structural properties of the $2$--atoms}

In this section we obtain some results dealing with non
necessarily abelian groups. Proposition \ref{m+3} gives
the value of $\kappa _2$ for sets with a small cardinality.

We shall define the {\em defect} of $S$ as $$\mu (S)=\kappa
_2(S)-|S|.$$

The
following lemma gives bound on the cardinality of a $2$--atom.
 This bound  allows to give some proofs by induction.

\begin{lemma}\cite{halgebra} \label{small2atom}
Let  $ S$  be a finite generating   subset of a group $G$ with $1\in
S$ and $|S|\ge 3.$ Also assume that either $G$ is infinite  or $\alpha _2 \le \alpha _{-2}.$
  Let $H$ be  a   $2$--atom  with $1\in H$ and $|\Pi ^l(H)|=1$.  Then
  $|H|\le |S|-1$.

\end{lemma}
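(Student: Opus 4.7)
The plan is to argue by contradiction: assume $|H|\ge|S|$, so $|H|\ge 3$ since $|S|\ge 3$. I first extract a strong structural property of $H$. For any $a\in H\setminus\{1\}$ the translate $aH$ is a $2$-atom (left translations are Cayley graph automorphisms) and $aH\ne H$ because $a\notin\Pi^l(H)=\{1\}$; since $a\in aH\cap H$, Theorem \ref{gintersection} (two distinct $2$-atoms meet in at most one point) forces $aH\cap H=\{a\}$. Rewriting, this says that for $a,b\in H$ one has $ab\in H$ if and only if $a=1$ or $b=1$; in particular $H\cap H^{-1}=\{1\}$ and, by the same token, $Ha\cap H=\{a\}$ for every $a\in H\setminus\{1\}$.

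I then double-count $N:=|\{(h,s)\in H\times S:hs\in H\}|$. For the upper bound I claim $c(s):=|\{h\in H:hs\in H\}|\le 1$ for every $s\in S\setminus\{1\}$. When $s\in H\setminus\{1\}$ the structural identity immediately gives $c(s)=1$; when $s=y^{-1}$ with $y\in H\setminus\{1\}$, one has $c(s)=|H\cap Hy|=1$. The delicate case is $s\in S\setminus(H\cup H^{-1})$: if $x_1\ne x_2\in H$ both satisfy $h_i:=x_is\in H$, then $x_i,h_i\ne 1$ and $h_1\ne h_2$, and the identity $g:=x_1x_2^{-1}=h_1h_2^{-1}\ne 1$ holds. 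The $2$-atom $gH$ then contains $gx_2=x_1\in H$ and $gh_2=h_1\in H$; these are distinct (otherwise $x_1=x_1s$ forces $s=1$), so $|gH\cap H|\ge 2$. This forces $gH=H$ and hence $g\in\Pi^l(H)=\{1\}$, contradicting $x_1\ne x_2$. Thus $N\le c(1)+(|S|-1)=|H|+|S|-1$.

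For the lower bound I exploit the minimality of the atom $H$. Since $|H|\ge 3$, for each $x\in H$ the set $H\setminus\{x\}$ has size $\ge 2$ and induces a valid $2$-separation, so the isoperimetric inequality gives $|(H\setminus\{x\})S|\ge|H|-1+\kappa_2=|HS|-1$. Equality would turn $H\setminus\{x\}$ into a $2$-fragment of size $|H|-1$, contradicting atom minimality; combined with $(H\setminus\{x\})S\subset HS$, the only remaining possibility is $(H\setminus\{x\})S=HS$. Consequently every $y=xs$ admits a second representation $y=h's'$ with $h'\in H\setminus\{x\}$, so $r(y):=|\{(h,s):hs=y\}|\ge 2$ for every $y\in HS$, and in particular $N=\sum_{y\in H}r(y)\ge 2|H|$. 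Combining the two bounds, $2|H|\le|H|+|S|-1$ yields $|H|\le|S|-1$, contradicting the assumption $|H|\ge|S|$.

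The main obstacle is the third case of the upper-bound argument: one must guess the right translate $gH$ and verify that the two witnesses $x_1$ and $h_1$ in $gH\cap H$ are genuinely distinct, which is exactly where $s\ne 1$ is used. Once the structural identity and the minimality-driven covering property $(H\setminus\{x\})S=HS$ are in hand, the two-sided count on $N$ closes the argument immediately.
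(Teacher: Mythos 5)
Your argument is correct and is at bottom the same as the paper's: the paper uses atom minimality to produce, for each $x\in H$, some $a_x\in H$ with $a_x^{-1}x\in S\setminus\{1\}$, then pigeonholes this map from $H$ (of size $\ge |S|$) into $S\setminus\{1\}$ to find $x\ne y$ with $a_x^{-1}x=a_y^{-1}y$, and the two resulting common points of $a_ya_x^{-1}H$ and $H$ contradict $|\Pi^l(H)|=1$ via Theorem \ref{gintersection} --- exactly the collision you rule out in the third case of your upper bound. Your version repackages this as a two-sided count of $N$ (your covering property $(H\setminus\{x\})S=HS$ is the paper's minimality step, and your bound $c(s)\le 1$ is the contrapositive of its pigeonhole), at the cost of the extra structural identity $aH\cap H=\{a\}$ and the case analysis for $s\in H$ and $s\in H^{-1}$, which the paper's shorter formulation avoids.
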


\begin{proof} Assume  that $|H|\ge |S|.$ Then $|H|\ge 3$.   For each $x\in H$,
there is an $a_x\in H$ such that $a_x^{-1}x\in S\setminus \{1\},$
since otherwise $\partial (H\setminus \{x\})\subset \partial (H),$
and $H\setminus \{x\}$ would be a $2$-fragment, a contradiction.
 By the pigeonhole
principle there are
 $x,y\in H$
with $x\neq y$ and $a_x^{-1}x=a_y^{-1}y$. It follows that $a_y,y \in
a_ya_x^{-1}H$. Therefore by Theorem \ref{gintersection},
$H=a_ya_x^{-1}H$ and hence  $|\Pi^l(H)|\ge 2$, a contradiction.\end{proof}

Put $p(G)=\min \{|H| : H \mbox{ is a  subgroup with}\ |H|\ge 2\}$.

 The reader may define left and right progressions, c.f.\cite{hast}.
 But these notions coincide if the progression contains $1$. We shall
 mean by an $r$--progression a subset of the form $\{r^{j}, \cdots
 ,r^{k+j}\}$.

\begin{proposition} { \label{m+3}
Let  $ S$  be a  generating subset of a group $G$ with $1\in S$ and
$|S|\le p(G)$. Then

\begin{itemize}
  \item[(i)] $S$ is a Cauchy subset,
  \item [(ii)] if $\kappa _2(S)= |S|-1$, then $S$ is a
progression.
\end{itemize}

}

\end{proposition}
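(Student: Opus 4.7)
For part (i), the plan is to apply Corollary \ref{Cay2} to obtain a finite subgroup $L\neq \langle S\rangle$ with $\kappa_1(S)=\min(|LS|-|L|,|SL|-|L|)$, and then to show that $L=\{1\}$. Taking $X=\{1\}$ yields the upper bound $\kappa_1\le |S|-1$, whereas if $|L|\ge 2$ then $|L|\ge p(G)\ge |S|$ by hypothesis; since $L\neq \langle S\rangle$ and $1\in S$ force $S\not\subseteq L$, the set $LS$ strictly contains $L$, and being a union of left $L$-cosets satisfies $|LS|\ge 2|L|$, giving $\kappa_1\ge |L|\ge |S|$, a contradiction. Hence $L=\{1\}$ and $\kappa_1=|S|-1$, so $S$ is Cauchy.

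For part (ii), the case $|S|\le 2$ is trivial, so I would assume $|S|\ge 3$ and, after possibly replacing $S$ by $S^{-1}$ (which preserves both the hypothesis and the progression conclusion), take a 2-atom $H$ with $1\in H$ and $m=|H|$, with $\alpha_2\le \alpha_{-2}$ or $G$ infinite. First I would rule out $|\Pi^l(H)|\ge 2$: otherwise $\Pi^l(H)$ is a subgroup in $H$ of size $\ge p(G)\ge |S|$, so $HS$, being a union of left $\Pi^l(H)$-cosets properly containing $H$, would give $|HS|\ge |H|+|\Pi^l(H)|\ge |H|+|S|$, contradicting $\kappa_2=|S|-1$. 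Lemma \ref{small2atom} then gives $m\le |S|-1$. Next, I would apply Theorem \ref{gintersection} to the translates $uH$ for $u\in H\setminus\{1\}$: each is a 2-atom distinct from $H$ (since $u\notin \Pi^l(H)=\{1\}$), so $|uH\cap H|\le 1$; combined with $u\in uH\cap H$, this forces $uH\cap H=\{u\}$, hence $H^*\cdot H^*\cap H=\emptyset$ where $H^*=H\setminus\{1\}$, and consequently $H\cap H^{-1}=\{1\}$. The minimality of $H$ also forbids $\{1,h\}$ ($h\in H^*$) from being a 2-fragment, giving $|S\cap hS|\le |S|-2$, and forbids each $H\setminus\{h\}$ from being a 2-fragment, giving $R(g):=|\{(h,s)\in H\times S:hs=g\}|\ge 2$ for every $g\in HS$; in particular $R(1)\ge 2$ produces some $r\in S$ with $r^{-1}\in H^*$, and by $H\cap H^{-1}=\{1\}$ this $r$ lies in $S\setminus H$.

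The hard part will be to rule out $m\ge 3$ using these rigidity constraints. The plan is to combine the identity $\sum_{g\in HS}R(g)=mn$ with $|HS|=m+n-1$ and $R(g)\ge 2$ (yielding $(m-2)(n-2)\ge 2$), the lower bound $|HH^{-1}|\ge m(m-1)+1$ that follows from the intersection theorem applied to translates $vH$ for $v\in HH^{-1}\setminus\{1\}$, and the analogous inequality $|S\cap uS|\le |S|-2$ for every $u\in H^{-1}H\setminus\{1\}$ (obtained from the minimality argument applied to translated pairs $\{h,h'\}\subseteq H$), together with the orbit behaviour of multiplication by $r$ on $S$, to derive a contradiction compatible with $|S|\le p(G)$. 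Once $m=2$ is established, $H=\{1,r\}$ and $|\partial H|=|S|-1$ give $|S\cap rS|=|S|-1$; the map $s\mapsto rs$ is then a bijection from $S\setminus\{s_0\}$ to $S\setminus\{a\}$, where $a$ is the unique element of $S\setminus rS$ and $s_0$ the unique element of $S$ with $rs_0\notin S$. Iterating $r\cdot$ from $a$ traces $S=\{a,ra,\ldots,r^{n-1}a\}$ before exiting at $rs_0$; since $1\in S$, $a$ is a power of $r$, and $S$ is an $r$-progression.
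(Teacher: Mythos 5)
Your part (i) is correct and is essentially the paper's argument (Corollary \ref{Cay2} plus the observation that a nontrivial subgroup would force $\kappa_1\ge |L|\ge p(G)\ge |S|$). Your setup for part (ii) is also sound: ruling out $|\Pi^l(H)|\ge 2$, invoking Lemma \ref{small2atom}, and the deduction that $uH\cap H=\{u\}$ for $u\in H\setminus\{1\}$ all match what the paper needs, and your final orbit argument recovering the progression from a $2$-atom $\{1,r\}$ is fine (indeed more detailed than the paper's one line, which just cites $|S|\le p(G)\le|\subgp{r}|$).

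The genuine gap is the step you yourself flag as ``the hard part'': ruling out $|H|\ge 3$. What you offer there is a list of constraints --- $(m-2)(n-2)\ge 2$, $|HH^{-1}|=m(m-1)+1$, $|S\cap uS|\le|S|-2$ for $u\in H^{-1}H\setminus\{1\}$ --- together with a promise to ``combine'' them. But $(m-2)(n-2)\ge 2$ is perfectly consistent with $m\ge 3$, and none of the other inequalities is shown to collide with $|S|\le p(G)$; no contradiction is actually derived, so the proposition is not proved for $|H|\ge 3$. You also never address the possibility $\subgp{H}\neq G$, which must be excluded before any counting inside $G$ can be exploited. The paper's mechanism here is quite different and is the real content of (ii): it is an induction on $|S|$ applied to $H^{-1}$. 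One first shows via Lemma \ref{diderich} (applied to a right $\subgp{H}$-decomposition of $S$) that $\subgp{H}=G$, then observes $\kappa_2(H^{-1})\le |S^{-1}H^{-1}|-|S^{-1}|\le |H|-1$ with $|H|\le |S|-1\le p(G)-2$, so the induction hypothesis makes $H$ a $u$-progression; but then $|H\cap uH|=|H|-1\ge 2$ for $|H|\ge 3$, contradicting Theorem \ref{gintersection} since $|\Pi^l(H)|=1$. Without this (or some completed substitute for it), your proof of (ii) is incomplete.
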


\begin{proof}
By  Corollary \ref{Cay2}, there is a finite subgroup $N$ with
$\kappa _1(S)=\min(|SN|,|NS|)-|N|\ge |N|$.  We must have $|N|=1$, since otherwise $\kappa
_1(S)\ge |N|\ge p(G)\ge |S|,$ a contradiction. Therefore $\kappa
_1(S)=\min(|NS|,|SN|)-1= |S|-1,$ and hence (i) holds.

  We shall now prove that $\alpha _2(S)=2$ if $\kappa _2(S)=|S|-1$,
by induction on $|S|.$ Take a $2$-atom $H$ of $S$ and a
$2$-atom $K$ of $S^{-1}$ with $1\in H\cap K.$

{\bf Case 1}. $|H|\le |K|$ or $G$ is infinite.

We have $|\Pi^l(H)|=1,$ since otherwise, we would have
 $|S|-1=\kappa_2(S)=|\Pi^l(H)HS|-|\Pi^l(H)H|\ge
 |\Pi^l(H)|\ge p(G)$, a contradiction.

Put $L=\subgp{H^{-1}}$. We have clearly $L=\subgp{H}$.

By Lemma \ref{small2atom}, $|H|\le |S|-1\le p(G)-2.$ By (i), $H^{-1}$
is a Cauchy subset. Let $S=\bigcup \limits_{i\in I}
S_i$ be a right $L$--decomposition of $A.$ Put $W=\{i :
|HS_i|<|L|\}.$ By Lemma \ref{diderich}, $|W|\le 1$. Then $|I|=1$, since otherwise $|HS|\ge p(G)+|H|>|H|+|S|-1$, a contradiction.
Hence   $L=G$. Now we have
 $\kappa _2(H^{-1})\le  |S^{-1}H^{-1}|- |S^{-1}|\le |H|-1.$

 By the induction hypothesis   there is a   $u$ such that
 $1+|H|=|\{1,u\}H^{-1}|=2|H|-|H\cap Hu^{-1}|$. Since $1\in H$, we have $H\subset
 \subgp{u}$.
   It follows that  $|H\cap {u}H|=|H|-1$. Since $|\Pi^l(H)|=1$, we have by
   Theorem \ref{gintersection},
  $|H|=2.$

Put $H=\{1,r\}.$
    It follows since $|S|\le p(G)\le |\subgp{r}|$ that $S$ is an  $r$--progression.

{\bf Case 2}. $|H|> |K|$ and $G$ is finite. By Lemma
\ref{finiteduality}, we have $\kappa _{2}(S)=\kappa _{-2}(S).$ The
proof follows as in Case 1.\end{proof}

\begin{corollary}\label{karolyi}
Let $G$ be a group and let $A,B$ be subsets of $G$ such that
 $|A|,|B|\ge 2,$ $1\in A\cap B$ and $|B|\le
p(G).$

Assume that $|AB|=|A|+|B|-1\le |K|-1,$ where $K=\subgp{B}$.
\begin{itemize}
  \item If $|A|+|B|= |K|$ then there is an $a$ such that
  $A^{-1}a=K\setminus B;$
  \item If $|A|+|B|\le |K|-1$ then there is an $r$ such that  $B$ and $A$  are $r$--progressions.
\end{itemize}

\end{corollary}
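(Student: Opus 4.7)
The plan is to exploit Proposition~\ref{m+3} applied to $B$ as a generating subset of $K=\subgp{B}$, after first reducing to the case $A\subset K$. Proposition~\ref{m+3} applies to $B$ in $K$ since $|B|\le p(G)\le p(K)$ (every nontrivial subgroup of $K$ is a nontrivial subgroup of $G$). First, I decompose $A=\bigcup_{i\in I}A_i$ as a left $K$-decomposition; since $B\subset K$, the products $A_iB$ lie in distinct left cosets of $K$ and are pairwise disjoint. By Proposition~\ref{m+3}(i), $\kappa_1(B)=|B|-1$ in $\mbox{Cay}(K,B)$, so the isoperimetric inequality gives $|A_iB|\ge\min(|K|,|A_i|+|B|-1)$; since $|AB|\le|K|-1$, no $|A_iB|$ can equal $|K|$, whence $|AB|\ge|A|+|I|(|B|-1)$. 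Combined with $|AB|=|A|+|B|-1$ and $|B|\ge 2$, this forces $|I|=1$, and since $1\in A\cap K$ we conclude $A\subset K$.

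For the sub-case $|A|+|B|=|K|$, we have $|K\setminus AB|=1$. Letting $c$ be its unique element, $a^{-1}c\in B$ would give $c\in AB$, so $A^{-1}c\cap B=\emptyset$ and hence $A^{-1}c\subset K\setminus B$; since $|A^{-1}c|=|A|=|K\setminus B|$, the claimed equality follows.

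For the sub-case $|A|+|B|\le|K|-1$, we have $|K\setminus AB|\ge 2$ and $|A|\ge 2$, so $A$ is a $2$-fragment of $B$ in $\mbox{Cay}(K,B)$ with $|\partial A|=|AB|-|A|=|B|-1$ (using $1\in B$). Hence $\kappa_2(B)\le|B|-1$, and together with the trivial monotonicity $\kappa_2\ge\kappa_1$ and Proposition~\ref{m+3}(i) this yields $\kappa_2(B)=|B|-1$. Proposition~\ref{m+3}(ii) then provides $r$ with $B=\{1,r,r^2,\dots,r^{|B|-1}\}$.

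It remains to show $A$ is itself an $r$-progression, which I expect to be the main obstacle. Since $B$ generates $K=\subgp{r}$, $K$ is cyclic and abelian; switching to additive notation gives $A\subset K$, $B=\{0,1,\dots,b-1\}$ with $b=|B|$, and $|A+B|=|A|+b-1\le|K|-1$. For $K=\mathbb{Z}$, I would group the elements of $A$ into maximal runs where consecutive elements differ by at most $b$; each such run $C$ contributes at least $|C|+b-1$ to $|A+B|$, with equality iff $C$ is an interval of $\mathbb{Z}$, so summing over $s$ runs gives $|A+B|\ge|A|+s(b-1)$, and the hypothesis forces $s=1$ and $A$ to be an interval. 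For $K=\mathbb{Z}/n\mathbb{Z}$, pick $c\notin A+B$ (possible since $|A+B|\le n-1$) and translate $A$ so that $c$ corresponds to $n-1$; then $c\notin A+B$ means every $a\in A$ avoids $\{n-b,\dots,n-1\}$, so $A$ lifts to $\tilde A\subset[0,n-b-1]\subset\mathbb{Z}$ with $\tilde A+[0,b-1]\subset[0,n-2]$ having no wrap-around, reducing the problem to the $\mathbb{Z}$ case.
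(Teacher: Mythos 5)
Your proof is correct and follows the same skeleton as the paper's: the left $K$-decomposition plus Proposition~\ref{m+3}(i) to force $A\subset K$ (the paper gets $j=1$ slightly differently, via Lemma~\ref{diderich} and the crude bound $|K|-1\ge |AB|>(j-1)|K|$, but your summation argument is equally valid), the same cardinality count for the case $|A|+|B|=|K|$, and the same use of $\kappa_2(B)=|B|-1$ with Proposition~\ref{m+3}(ii) to make $B$ an $r$-progression. The only genuine divergence is the final step showing $A$ is an $r$-progression: the paper observes that $|X\{1,r\}|\ge |X|+1$ whenever $X\{1,r\}\neq K$, with equality only for $r$-progressions, and iterates this through $|AB|=|A\{1,r\}^{|B|-1}|$ to squeeze $|A\{1,r\}|=|A|+1$; you instead pass to additive notation and run a direct interval analysis (maximal runs in $\mathbb{Z}$, and a translation trick to unwrap $\mathbb{Z}/n\mathbb{Z}$ to $\mathbb{Z}$). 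Both finishes are elementary and correct; the paper's iteration is shorter and stays in the group-theoretic language, while yours is more self-contained in that it proves from scratch the "equality only for intervals" fact that the paper merely asserts.
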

\begin{proof}
 By Proposition \ref{m+3}, $\kappa _1(B)=|B|-1.$
Take a
left $K$-decomposition $A=A_1\cup \cdots \cup A_j$.
 By Lemma \ref{diderich}, $|\{ i : |A_iB|\neq |K|\}|\le 1.$  We have  $|K|-1\ge |AB|>(j-1) |K|,$
  and hence $j=1$.

 Assume first $|A|+|B|= |K|.$
 By the isoperimetric inequality $|A|+|B|-1\ge |AB|\ge |A|+|B|-1=|K|-1.$  Take
 $\{a\}=K\setminus AB.$ We have $A^{-1}a\subset K\setminus B.$
 Since these two sets have the same cardinality we must have $A^{-1}a= K\setminus B.$

Assume now  $|A|+|B|\le |K|-1.$

 Then $B$ is $2$-separable
 and $\kappa _2(B)\le |B|-1<p(G).$ By Proposition \ref{m+3},
  $B$ is a $r$--progression, for some $r$. Now $r$ generates
  $K$, and hence $K$ is a cyclic group. It follows that $|X\{1,r\}|\ge |X|+1$ for every subset $X\subset K$ with $X\{1,r\}\neq K$. Moreover equality holds only if $X$ is an $r$--progression. Now $|AB|=|A\{1,r\}^k|$, where $k=|B|-1$. The
  above observation shows that $|A\{1,r\}|=|A|+1,$ and hence $A$ is an $r$--progression.
 \end{proof}

 Corollary \ref{karolyi} implies  a
result due to Brailowski-Freiman \cite{brailowski} since
$p(G)=\infty$ for a torsion free group. Corollary \ref{karolyi}
implies the validity of K\'arolyi's Theorem 4 \cite{karolyi} for
infinite groups. In the finite case  K\'arolyi's condition
$|A|+|B|\le p(G)$ is relaxed in   Corollary \ref{karolyi} to the
weaker  one $|B|\le p(G)$ and $|A|+|B|\le
|\subgp{B}|-1.$ Notice that Corollary \ref{karolyi} implies Vosper's
Theorem.

\section{ $2$--atoms in abelian groups}

In this section we  determine the structure of the $2$--atoms
in the abelian case if $\mu \le 0$. This result extends to the
infinite case a previous result \cite{hactaa}. The proof given here
is much easier than our first proof.

\begin{lemma}\label{remab}
Let $S$ be a finite subset of an abelian group $G$ with $1\in S.$
Then $\kappa _k(S)= \kappa _{-k}(S)$ and $\alpha _k(S)= \alpha
_{-k}(S)$.
\end{lemma}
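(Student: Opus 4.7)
The plan is to exhibit a cardinality-preserving bijection between $k$-fragments of $\mathrm{Cay}(\langle S\rangle,S)$ and $k$-fragments of $\mathrm{Cay}(\langle S\rangle,S^{-1})$, induced by the inversion map $\iota:x\mapsto x^{-1}$ on $\langle S\rangle$. Note first that, by Lemma \ref{finiteduality}, the equalities are already known when $\langle S\rangle$ is finite, so the only real content lies in the infinite case; nevertheless the argument below handles both cases uniformly and does not rely on finiteness.

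The key observation is that because $G$ is abelian, $\iota$ is a group automorphism of $\langle S\rangle$ that carries $S$ to $S^{-1}$. Consequently, for every finite $X\subset\langle S\rangle$,
\begin{equation*}
\iota(XS) \;=\; (XS)^{-1} \;=\; S^{-1}X^{-1} \;=\; X^{-1}S^{-1},
\end{equation*}
where in the last step I use commutativity. Hence $\Gamma^{-1}(X^{-1})=\iota(\Gamma(X))$, so $|X^{-1}|=|X|$, $|\Gamma^{-1}(X^{-1})|=|\Gamma(X)|$, and therefore
\begin{equation*}
|\partial_{-}(X^{-1})| \;=\; |\Gamma^{-1}(X^{-1})|-|X^{-1}| \;=\; |\Gamma(X)|-|X| \;=\; |\partial(X)|.
\end{equation*}
Moreover $|\langle S\rangle\setminus\Gamma^{-1}(X^{-1})|=|\langle S\rangle\setminus\Gamma(X)|$, so the defining conditions $|X|\ge k$ and $|\langle S\rangle\setminus\Gamma(X)|\ge k$ are preserved by $X\mapsto X^{-1}$.

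Putting these facts together, $X$ is a $k$-fragment of $\mathrm{Cay}(\langle S\rangle,S)$ if and only if $X^{-1}$ is a $k$-fragment of $\mathrm{Cay}(\langle S\rangle,S^{-1})$, and in that case the two fragments have equal cardinality and equal boundary size. Taking minima over the respective families of fragments yields simultaneously $\kappa_k(S)=\kappa_{-k}(S)$ and $\alpha_k(S)=\alpha_{-k}(S)$. (In the convention case where the Cayley graph is not $k$-separable, the same correspondence via $\iota$ shows that $\mathrm{Cay}(\langle S\rangle,S^{-1})$ is not $k$-separable either, and both conventional values coincide trivially.) The only step that needs a moment of care is the commutativity identity $\iota(XS)=X^{-1}S^{-1}$, which is precisely where the abelian hypothesis is used and which has no counterpart in a non-abelian group; this is the sole ingredient beyond bookkeeping.
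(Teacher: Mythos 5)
Your argument is correct and is precisely the paper's proof: the author also observes that inversion $x\mapsto x^{-1}$ is an isomorphism from $\mathrm{Cay}(\langle S\rangle,S)$ onto its reverse $\mathrm{Cay}(\langle S\rangle,S^{-1})$ in the abelian case, and the equalities of $\kappa_k$ and $\alpha_k$ follow immediately. You have simply written out the bookkeeping (preservation of cardinalities, boundaries, and the separability conditions) that the paper leaves implicit.
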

\begin{proof}
This follows
 since the map "$x\mapsto x^{-1}$ " is an isomorphism
from $\mbox{Cay} (G,S)$ onto its reverse $\mbox{Cay} (G,S^{-1})$, in
the abelian case.\end{proof}

\begin{lemma} \label{deg2atom}
Let  $ S$  be a finite generating   subset of an abelian  group $G$
with $1\in S$  and $\mu(S)\le 0$.  Let $H$ be  a
$2$--atom of $S$ with $1\in H$. Also assume that $|H|\neq 2$ and that
$H$ is not a subgroup.
 Then\begin{itemize}
       \item $|H|\le \kappa _2 (H)$.
       \item If $H$ generates $G$ then $|H|=3$.
     \end{itemize}

\end{lemma}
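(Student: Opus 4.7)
The plan combines two structural properties of $H$ with Kneser's theorem. Because $H$ is not a subgroup, Lemma \ref{rightper} (whose hypothesis is met in the abelian case by Lemma \ref{remab}) forces $|\Pi^r(H)|<2$, whence $\Pi^r(H)=\{1\}$. Consequently every translate $aH$ with $a\in G\setminus\{1\}$ is a 2-atom of $S$ distinct from $H$, and Theorem \ref{gintersection} yields the \emph{almost-disjoint-translate bound}
\[
|aH\cap H|\le 1\qquad\text{for every }a\in G\setminus\{1\}.
\]
Lemma \ref{small2atom} moreover gives $|H|\le|S|-1$, and combined with $|H|\neq 2$ one obtains $|H|\ge 3$.

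To prove $|H|\le\kappa_2(H)$, take any $X\subseteq\subgp{H}$ with $|X|,\,|\subgp{H}\setminus XH|\ge 2$ and set $Q=\Pi^r(XH)$. Kneser's theorem inside $\subgp{H}$ yields $|XH|\ge|XQ|+|HQ|-|Q|$. If $Q=\{1\}$, this gives $|XH|\ge|X|+|H|-1$, and the extremal case is excluded via Kemperman's structure theorem: equality would force $H$ to be an arithmetic progression of length $\ge 3$, but then a length-two subprogression of $H$ would be a 2-fragment of $S$ of boundary $|H|-1<\kappa_2(S)$, contradicting that $H$ is a 2-atom. If $Q\neq\{1\}$, then $Q\not\subseteq\Pi^r(H)=\{1\}$, and I argue that the failure of $|HQ|\ge|H|+|Q|$ would produce a strictly smaller 2-fragment of $S$ inside an appropriate union of partial $Q$-cosets of $H$, again contradicting 2-atomicity; otherwise Kneser directly gives $|XH|\ge|X|+|H|$.

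For the second bullet, assume $\subgp{H}=G$. Since $|S|\ge|H|+1\ge 4$ by Lemma \ref{small2atom} and $|G\setminus SH|=|G\setminus HS|\ge 2$, the set $S$ is a 2-fragment candidate of $\mbox{Cay}(G,H)$, hence
\[
\kappa_2(H)\ \le\ |SH|-|S|\ =\ |H|+\kappa_2(S)-|S|\ \le\ |H|
\]
by $\mu(S)\le 0$. Together with the first bullet, $\kappa_2(H)=|H|$ and $\mu(H)=0$. Let $X$ be a 2-atom of $\mbox{Cay}(G,H)$ with $1\in X$, so $|XH|=|X|+|H|$. If $|X|=2$, writing $X=\{1,r\}$ gives $|H\cap rH|=|H|-2$, which together with $|H\cap rH|\le 1$ forces $|H|\le 3$. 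If $|X|\ge 3$, inclusion-exclusion using the almost-disjoint-translate property gives $|XH|\ge|X||H|-\binom{|X|}{2}$, and combined with $|XH|=|X|+|H|$ this yields $|H|\le|X|(|X|+1)/(2(|X|-1))$; when $X$ is a subgroup, the divisibility $|X|\mid|H|$ forces $|X|=3=|H|$, and when $X$ is not a subgroup, the lemma applied to $H$ in place of $S$ (its hypotheses hold since $\mu(H)=0$) gives $|X|=3$ and hence $|H|=3$.

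The principal technical obstacle is the $Q\neq\{1\}$ branch of the Kneser step, where one must extract a strictly smaller 2-fragment of $S$ from the failure of $|HQ|\ge|H|+|Q|$ by combining the almost-disjoint-translate property with the 2-atomicity of $H$ in a nontrivial way. A secondary hurdle is the branch of the second bullet where the recursively-obtained atom $X$ is not a subgroup and does not generate $G$; this requires additional inductive structure on the size (or rank) of $\subgp{X}$ to close off.
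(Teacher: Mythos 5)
Your proof has genuine gaps, two of which you flag yourself, and the route you choose for the first bullet does not close. To prove $|H|\le\kappa_2(H)$ you need $|XH|\ge |X|+|H|$ for \emph{every} admissible $X$ in $\mbox{Cay}(\subgp{H},H)$, and Kneser's theorem only yields $|X|+|H|-1$ in the aperiodic case. Your proposed exclusion of the equality case is both unjustified and out of scale: equality in Kneser does not force $H$ to be a progression (that is exactly the Kemperman critical-pair territory that this paper is building tools to rederive, so importing Kemperman's structure theorem here is against the grain of the whole project), and even granting that $H$ were a progression, a two-element subprogression of $H$ has no reason to be a $2$--fragment of $S$ --- its boundary in $\mbox{Cay}(G,S)$ is not controlled by its sitting inside $H$, so no contradiction with $2$--atomicity of $H$ is obtained. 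The $Q\neq\{1\}$ branch and the non-generating, non-subgroup branch of the second bullet you explicitly leave open.

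The irony is that you already hold the key that makes all of this unnecessary: the inclusion--exclusion bound $|XH|\ge k|H|-\binom{k}{2}$ (with $k=|X|$), which follows from $|aH\cap H|\le 1$ for $a\neq 1$, and which you deploy only in the second bullet. The paper's proof applies it not to an arbitrary $X$ but to $K$, a $2$--atom of $H$ containing $1$, for which $|KH|=\kappa_2(H)+|K|$ holds exactly. If $|K|>|H|$, Lemma \ref{small2atom} forces $K$ to be periodic, hence a subgroup by Lemma \ref{rightper}, and then $\kappa_2(H)=|KH|-|K|\ge|K|>|H|$. If $|K|\le|H|$, then $\kappa_2(H)\ge k|H|-\binom{k}{2}-|K|$, which is $\ge|H|$ both for $k=2$ and for $k\ge 3$. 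This is elementary counting: no Kneser, no Kemperman, no induction. The second bullet then drops out of the same inequality combined with $\kappa_2(H)\le|HS|-|S|\le|H|$ (which you do derive correctly): $|H|+|K|\ge k|H|-\binom{k}{2}$ together with $k\le|H|$ forces $|H|=3$ by a short case check on $k\le|H|-1$ versus $k=|H|$. I recommend discarding the Kneser/Kemperman scaffolding entirely and reorganizing around the $2$--atom $K$ of $H$.
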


\begin{proof}

By Lemma \ref{rightper}, $H$ is aperiodic.
By Lemma \ref{remab} and Theorem \ref{gintersection}, for every
$a\in H\setminus \{1\},$ we have \begin{equation} \label{eqHAP}
|aH\cap H|=1.\end{equation}
Let $K$ be a $2$--atom of $H$ with $1\in K$. Put $K=\{1,a_1, \cdots ,a_{k-1}\}$.
Assume first that $|K|>|H|$.  By Lemma \ref{small2atom},  $K$
is periodic. Hence $K$ is a subgroup by Lemma \ref{rightper} and hence
 $|\kappa_2(H)=|KH|-|K|\ge |K|>|H|$.

Assume now that  $|K|\le |H|$.

 We  have using (\ref{eqHAP})
 \begin{eqnarray}
 \kappa _2(H)+|K| &\ge& |KH|\nonumber \\&\ge& |\{1,a_1, \cdots ,a_{k-1}\}H|\nonumber\\
 &\ge& |H|+|a_1H\setminus H|+ \cdots +a_{k-1}H\setminus (H\cup a_1H\cup \cdots ,a_{k-2}H) \nonumber\\
 &\ge& |H|+|H|-1+ \cdots |H|-k+1=k|H|-\frac{k(k-1)}2 \label{2atomk}
\end{eqnarray}
and hence $\kappa _2 (H)\ge k|H|-\frac{k(k-1)}2-|K|.$ If $k=2$, then $\kappa _2 (H)\ge 2|H|-1-|K|\ge |H|.$
If $k\ge 3$, then $\kappa _2 (H)\ge (k-1)|H|-\frac{k(k-1)}2\ge 2|H|-3\ge |H|.$

Assume now that $H$ generates $G$. The inequality $|HS|\le |H|+|S|\le |G|-2$ implies that
$\kappa _2(H)\le |H|$. Then we have since $|K|\le |H|$ and by (\ref{2atomk}),  $|H|+|K|\ge k|H|-\frac{k(k-1)}2.$

{\bf Case} 1. $k\le |H|-1$. Then $(k-1)(k+1)\le (k-1)|H|\le k+\frac{k(k-1)}2,$ and hence $|H|=k+1=3$.

{\bf Case} 2. $k=|H|$. Then $(k-2)k\le \frac{k(k-1)}2,$ and hence $|H|=k=3$.\end{proof}

The next result describes the $2$--atoms if $\mu (S)\le 0$.
\begin{theorem} { \cite{hejc3,hactaa}. Let  $S$  be
 a finite generating $2$--separable subset of an abelian group $G$ with $1\in S$ and $\mu(S) \leq 0$.
 Also assume that $|S|\neq |G|-6$ if
$\mu (S)=0$.
 Let
 $1\in M$ be a $2$--atom which  is not a subgroup. Then  $|M|=2.$
\label{k=d} }
\end{theorem}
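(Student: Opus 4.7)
The plan is to argue by contradiction: assume $|M|\ge 3$ and derive a contradiction. Lemma~\ref{deg2atom} gives $\kappa_2(M)\ge |M|$, and forces $|M|=3$ whenever $\subgp{M}=G$. The argument splits into two stages.

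The first stage reduces to the case $\subgp{M}=G$. Suppose $H:=\subgp{M}\subsetneq G$ and take a left $H$-decomposition $S=\bigsqcup_{i\in I} s_iT_i$ with $T_i\subset H$. Since $M$ is a $2$-atom of $S$,
\[
\sum_{i\in I}\bigl(|MT_i|-|T_i|\bigr) \;=\; |MS|-|S| \;=\; \mu(S)+|M| \;\le\; |M|.
\]
The isoperimetric inequality for $M$ inside $H$ gives $|MT_i|\ge |T_i|+|M|$ on every ``generic'' coset (those with $|T_i|\ge 2$ and $|H\setminus MT_i|\ge 2$), while a singleton $|T_i|=1$ contributes $|M|-1\ge 2$ to the sum. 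Hence at most one generic and at most one singleton index can occur, and they cannot coexist. Since $M$ is aperiodic (Lemma~\ref{rightper}, as $M$ is not a subgroup), every deficit-zero index must satisfy $T_i=H$. Analyzing the remaining configurations---all $T_i=H$, one generic plus full cosets, or one singleton plus full cosets---shows that $H$ itself is a $2$-fragment of boundary strictly smaller than $\kappa_2(S)$ in the generic sub-cases, contradicting the minimality of $|\partial(M)|$; the borderline situation where $HS=G$ is handled by observing that $M^\curlywedge$ is then confined to a single coset of $H$, and using Theorem~\ref{gintersection} to extract a smaller $2$-fragment.

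The second stage handles $\subgp{M}=G$, so $|M|=3$. Applying the isoperimetric inequality for $M$ to $X=S$, which is legal because $|S|\ge 2$ and $|M^\curlywedge|\ge 2$ by $2$-separability, yields $|MS|\ge |S|+3$. Combined with $|MS|=|M|+\kappa_2(S)\le 3+|S|$ (from $\mu(S)\le 0$), equality holds throughout: $\mu(S)=0$ and $\kappa_2(M)=3$. By Lemmas~\ref{negative} and~\ref{remab}, $M^\curlywedge$ is a negative $2$-fragment, so $|M^\curlywedge|\ge \alpha_{-2}(S)=\alpha_2(S)=3$, giving $|S|\le |G|-6$. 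The hypothesis excludes $|S|=|G|-6$, forcing $|M^\curlywedge|\ge 4$. Now, for every negative $2$-atom $M'$ (necessarily of size $3$), Theorem~\ref{gintersectionc} with $X=M$, $Y=M'$ shows that $|M\cap (M')^\curlyvee|\ge 2$ would force the $2$-atom $M$ into $(M')^\curlyvee$, i.e., $M\cap M'S^{-1}=\emptyset$; but $1\in M\cap M'S^{-1}$, a contradiction. Hence $|M\cap M'S^{-1}|\ge |M|-1=2$ for every negative $2$-atom $M'$. Averaging this over the translates $M'=M'_0+c$ of a fixed negative $2$-atom yields $3(3+|S|)\ge 2|G|$, i.e. $|S|\ge (2|G|-9)/3$; combined with $|S|\le |G|-7$ this already yields a contradiction for small $|G|$, and the remaining range is handled by refining the counting (using the aperiodicity of $M'$ provided by Lemma~\ref{rightper}) to produce a specific translate $M'+c$ realizing $|M\cap (M'+c)^\curlyvee|=2$, which via Theorem~\ref{gintersectionc} delivers a $2$-fragment of size $2$, contradicting $\alpha_2(S)=3$.

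The main obstacle is the closing step of the second stage: the slack $|M^\curlywedge|\ge 4>\alpha_{-2}(S)=3$ must be upgraded from an averaged counting constraint to a pointwise contradiction by pinning down a translate of the negative $2$-atom with the ``right'' intersection size. The first-stage reduction is more routine and rests mainly on the aperiodicity of $M$ (Lemma~\ref{rightper}) combined with the submodular behavior of the $H$-decomposition deficits.
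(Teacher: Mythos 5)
Your opening moves coincide with the paper's: Lemma~\ref{deg2atom} forces $|M|=3$ and $\kappa_2(M)\ge|M|$ in the generating case, equality in the isoperimetric inequality gives $\mu(S)=0$, and excluding $|S|=|G|-6$ yields $|S|\le|G|-7$. The genuine gap is your endgame. Theorem~\ref{gintersectionc} gives, for every translate $M'=cM_0'$ of a negative $2$--atom, that $|M\cap M'S^{-1}|\in\{0,2,3\}$, and the value $0$ occurs precisely when $M'\cap MS=\emptyset$ --- which does happen, since $|M^{\curlywedge}|\ge 4$. So your statement that $|M\cap M'S^{-1}|\ge 2$ holds ``for every negative $2$--atom $M'$'' is false, and the averaged inequality $3(3+|S|)\ge 2|G|$ built on it does not hold: summing $|M\cap cM_0'S^{-1}|$ over all $c\in G$ gives exactly $|M|\cdot|M_0'S^{-1}|=3(|S|+3)$, while the translates actually meeting $MS$ number only $|MS{M_0'}^{-1}|$, so the correct inequality is vacuous. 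Worse, the ``refinement'' you invoke --- producing a translate with $|M\cap(M'+c)^{\curlyvee}|=2$ --- is exactly the configuration the trichotomy forbids; to get a contradiction you would have to prove such a translate is unavoidable, and writing $n_2,n_3$ for the number of $c$ with $|M\cap cM_0'S^{-1}|=2,3$, the constraints $2n_2+3n_3=3(|S|+3)$ and a lower bound on $n_2+n_3$ are perfectly consistent with $n_2,n_3\ge 0$. No contradiction comes out of this count, and this is not a routine verification but the heart of the theorem (Serra--Z\'emor's example shows $3$--element $2$--atoms genuinely occur at $|S|=|G|-6$, so some sharper structural input is mandatory).

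The paper closes with an idea absent from your proposal: it fixes, among all translates of $M$ of the form $\{1,a,b\}$, one maximizing $o(a^{-1}b)$. Putting $N=(H^2S)\setminus(HS)$ and $U=\{a^{-1},b^{-1}\}$, the intersection theorems force $NU\subset(HS)\setminus S$, hence $|N|=3$, $ab^{-1}N=N$ and $o(ab^{-1})=3$; the extremal choice of the translate then bounds $o(a),o(b)\le 3$, so $|G|=|\subgp{a,b}|\le 9$, contradicting $3\le|S|\le|G|-7$. Your Stage~1 is broadly in the spirit of the paper's Case~2, but note that the paper does not derive a direct contradiction when $\subgp{M}\ne G$: it proves $\subgp{M}S=G$, deduces that $M$ is a $2$--atom of $S_1=S\cap\subgp{M}$ with all hypotheses preserved, and recurses into the generating case; your ``extract a smaller $2$--fragment'' step in the borderline situation is not substantiated and, as written, does not replace that reduction.
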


\begin{proof}
We shall write $o(y)=|\subgp{y}|$, for any $y\in G$.
Put $L=\subgp{H}$ and $t=\mu (S)$. Let $H$ denotes a translate of $M$ with the form $\{1,a,b\}$ maximizing
 $o(a^{-1}b)$. Clearly $H$ is not a subgroup.  We  have $\kappa _2 (S) \geq
|S|-1$, by Lemma \ref{remab} and Proposition \ref{Cay}, since in
this case a $2$-atom  is a $1$-atom. Therefore
$$-1\le t \le 0.$$

 Suppose to the contrary that $|H|\ge 3$.

{\bf Case} 1. $L=G$. By Lemma \ref{deg2atom}, $|H|=3$ and $\kappa _2(H)\ge |H|$. Then $|HS|\ge |S|+|H|$, and hence $t=0$.
Consider the case $|S|>|G|-6$. Then $|H^S|\le |G|-|H|-(|G|-5)\le 2.$ It follows by Lemma \ref{negative} that
$H^S$ is a negative $2$--fragment with $|H^S|=2$ and hence $-H^S$ is a $2$--atom, a contradiction.
So we have $|S|\le |G|-7$.

Put $U=\{a^{-1},b^{-1}\}.$
Put $N=(H^2S)\setminus HS$. By the definition of $\kappa _2$, we have
$|HS|+|N|\ge \min(|G|-1,|HS|+3)=|HS|+3$.
Notice that $xH^{-1}$ is a negative $2$--atom.
Take $x\in N$, then $(xH^{-1})\cap (HS)\neq \emptyset$.
Since $(xH^{-1})\not\subset (X^S)$, we have by Theorem \ref{gintersectionc}, $(xH^{-1})\cap (X^S)=\{x\}$.
Therefore $(NU)\subset (HS)\setminus S$.  Since $|(HS)\setminus S|=3$, we have $NU= (HS)\setminus S$ and
$|N|=3$. It follows also that $ab^{-1}N=N$. In particular $o(ab^{-1})=3.$
Since $\{1,a^{-1},ba^{-1}\}$, $\{1,b^{-1}a,ab^{-1}\}$ are translates of $M$, we must have
$\max (o(a),o(b))\le 3.$ It follows that $|G|\le 9$, contradicting the relation $3\le |S|<|G|-6.$

{\bf Case} 2. $L\neq G$. By Lemma \ref{deg2atom}, $\kappa _2(H)\ge |H|$. This implies obviously that $\kappa _1(H)=|H|-1$.

Put $L=\subgp{H}$ and take an  $L$--decomposition $S=S_1\cup \cdots
\cup S_j$. Without loss of generality we may assume $1\in S_1$ and $|HS_1|\le
\cdots \le |HS_j|$. Put $W=\{i : |HS_i|<|L|\}.$

By Lemma \ref{diderich}, $|W|\le \frac{|HS|-|S|}{\kappa _1(H)}\le \frac{|H|}{|H|-1}<2.$
Then we must have $|HS_i|=|L|,$ for all $i\ge 2.$ We must have

$$
LS=G ,$$ since otherwise
 \begin{eqnarray*}
 |S|-1\le \kappa_2(S)&\le &|LS|-|L|\\&=&(j-1)|L|=|HS|-|HS_1|\le |S|-3, \end{eqnarray*} a contradiction.
In particular  \begin{equation}\label{eqLS}S=(G\setminus L)\cup S_1.\end{equation}
Using (\ref{eqLS}) we see that any $2$-- fragment of $S$ containing $1$ is a $2$-fragment
of $S_1$. In particular $H$ is a $2$-atom of $S_1$.

We have  $|S_1|\neq  |L|-6,$ otherwise we have by (\ref{eqLS}), $|S|=  |G|-6.$

We have also $|S_1|\ge  2,$ since otherwise \begin{eqnarray*}
 |HS|&=&(j-1)|L|+|H|\ge |S|-|S_1|+3= |S|+2, \end{eqnarray*} a contradiction.
Now we apply Case 1 to get a contradiction. \end{proof}

The above result was proved for $\kappa _2=|S|-1$ in \cite{hejc3}, and
for finite groups in \cite{hactaa}.
In the case where $|G|$ is a prime, a proof of Theorem \ref{k=d}
using the Davenport's transform was obtained by the authors of
\cite{hrodseth2}. In \cite{SZ}, Serra and Z\'emor proved that a
$2$--atom of $S$ has size $=2$, if $|G|$ is a prime  and $|S|<
{{4+\kappa _2-|S|} \choose {2}}.$ A short proof of the last result
was given by the authors of \cite{hgoaa1}. A generalization of this
result to arbitrary finite abelian groups was obtained by the
authors of \cite{hgochowla}  when $\mu(S)\le 4.$

An example given by Serra and Z\'emor in  \cite{SZ} shows that in the
prime case the $2$-atom may have size $=3$ if $\mu =0$ and $|S|=|G|-6.$

\section{An upper bound for the size of a $2$--atom}

In this section we prove more results on the intersection of fragments that we shall need in the next section.

\begin{lemma}\label{dualfrageq} Let $\Gamma =(V,E)$ be a  locally finite   $k$--separable reflexive
graph. Let   $X$    and  $Y$ be   $k$--fragments. Then $X\subset Y$
if and only if $Y^\curlywedge\subset X^\curlywedge$.
\end{lemma}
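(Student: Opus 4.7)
The plan is to prove both implications using only the reflexivity of $\Gamma$ and Lemma \ref{negative}, specifically the duality formula $(X^\curlywedge)^\curlyvee = X$ valid for any $k$-fragment.

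For the forward direction, suppose $X \subset Y$. Then by the obvious monotonicity of the image map, $\Gamma(X) \subset \Gamma(Y)$, so $V \setminus \Gamma(Y) \subset V \setminus \Gamma(X)$. Since $\Gamma$ is reflexive we have $X \subset \Gamma(X)$ (and likewise for $Y$), hence $X^\curlywedge = V \setminus \Gamma(X)$ and $Y^\curlywedge = V \setminus \Gamma(Y)$. Therefore $Y^\curlywedge \subset X^\curlywedge$. Notice that this direction does not use that $X,Y$ are $k$-fragments at all.

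For the reverse direction, suppose $Y^\curlywedge \subset X^\curlywedge$. Apply the same monotonicity argument to $\Gamma^{-1}$ (which is likewise reflexive) to the sets $Y^\curlywedge$ and $X^\curlywedge$: we obtain $\Gamma^{-1}(Y^\curlywedge) \subset \Gamma^{-1}(X^\curlywedge)$, and hence
\[
(X^\curlywedge)^\curlyvee \;=\; V \setminus \Gamma^{-1}(X^\curlywedge) \;\subset\; V \setminus \Gamma^{-1}(Y^\curlywedge) \;=\; (Y^\curlywedge)^\curlyvee .
\]
Now invoke Lemma \ref{negative}, formula (\ref{eqdualf}), applied to the $k$-fragments $X$ and $Y$: we have $(X^\curlywedge)^\curlyvee = X$ and $(Y^\curlywedge)^\curlyvee = Y$. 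The displayed inclusion therefore becomes $X \subset Y$, as required.

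There is no real obstacle here: the forward implication is pure monotonicity, and the reverse is the same monotonicity run inside $\Gamma^{-1}$ followed by one application of the duality identity $(Z^\curlywedge)^\curlyvee = Z$ from Lemma \ref{negative}. The only subtle point worth flagging is that this duality identity requires $X$ and $Y$ to be $k$-fragments (it is precisely where the fragment hypothesis enters), whereas the forward direction does not. No finiteness of $V$ is needed, so the statement holds in both the finite and infinite settings uniformly.
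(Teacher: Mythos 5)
Your proof is correct and follows essentially the same route as the paper: the forward direction by monotonicity of $\Gamma$, and the reverse by monotonicity of $\Gamma^{-1}$ applied to the complements, closed off with the duality $(X^\curlywedge)^\curlyvee=X$ from Lemma \ref{negative}. If anything, your write-up is a little more careful than the paper's, which uses the symbol $X^{\curlyvee}$ where $X^{\curlywedge}$ is meant.
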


\begin{proof}
Assume first that $X\subset Y.$ Then
\begin{eqnarray*}
X^{\curlyvee}=V\setminus  \Gamma  (X)\supset
V\setminus \Gamma  (Y)=Y^{\curlyvee}
\end{eqnarray*}
Assume now that  $Y^{\curlyvee}\subset X^{\curlyvee}.$ Then
\begin{eqnarray*}
X=V\setminus  \Gamma ^{-1} (X^{\curlyvee})\subset
V\setminus \Gamma ^{-1}  (Y^{\curlyvee})=Y.
\end{eqnarray*}
\end{proof}

\begin{lemma}  Let $\Gamma =(V,E)$ be a reflexive locally finite
$k$--separable graph with $k\ge 2$. Let $A,F$ be $k$--fragments such
that
 $|A|\le | F^{\curlywedge}|$ and $|A\cap F|\ge k-1.$
Then
\begin{eqnarray}
|A\cap \partial( F)|&\le& |\partial (A)\cap F^\curlywedge |,  \label{1977}\\
  |\Gamma(A)\cap \Gamma( F)|&\le& |A\cap F |+\kappa _k \ \mbox{and}\label{eq1+kappa}\\
|F^\curlywedge\setminus A^\curlywedge| &\le &|A\setminus F|+\kappa _k-
\kappa _{k-1}.\label{eqAminusF}
\end{eqnarray}
\label{1+kappa} \end{lemma}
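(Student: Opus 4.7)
The plan is to partition $V$ into nine cells $R_{ij}$ ($1\le i,j\le 3$) via the rows $A$, $\partial A$, $A^\curlywedge$ and the columns $F$, $\partial F$, $F^\curlywedge$: thus $R_{11}=A\cap F$, $R_{12}=A\cap \partial F$, $R_{23}=\partial A\cap F^\curlywedge$, and so on. Using the two identities $|\partial A|=|R_{21}|+|R_{22}|+|R_{23}|=\kappa_k$ and $|\partial F|=|R_{12}|+|R_{22}|+|R_{32}|=\kappa_k$, elementary substitution shows that both (\ref{1977}) and (\ref{eq1+kappa}) reduce to the single inequality $|R_{12}|\le |R_{23}|$, while (\ref{eqAminusF}) reduces to $|R_{23}|-|R_{12}|\le \kappa_k-\kappa_{k-1}$; so there are essentially two distinct claims to verify.

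For (\ref{eqAminusF}), I would apply the isoperimetric inequality at level $k-1$ to $A\cap F$: by hypothesis $|A\cap F|\ge k-1$, and $V\setminus \Gamma(A\cap F)\supset A^\curlywedge$ has at least $k\ge k-1$ elements, so $|\partial(A\cap F)|\ge \kappa_{k-1}$. Since $\partial(A\cap F)\subset R_{12}\cup R_{21}\cup R_{22}$, this gives $|R_{12}|+|R_{21}|+|R_{22}|\ge \kappa_{k-1}$; subtracting from $|\partial A|=\kappa_k$ yields exactly $|R_{23}|-|R_{12}|\le \kappa_k-\kappa_{k-1}$.

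For (\ref{1977}) I would argue by contradiction. Assume $|R_{12}|>|R_{23}|$. Combining the partition $F^\curlywedge=R_{13}\cup R_{23}\cup R_{33}$ with the hypothesis $|F^\curlywedge|\ge |A|=|R_{11}|+|R_{12}|+|R_{13}|$ yields
\[
|R_{33}|\ge |R_{11}|+|R_{12}|-|R_{23}|\ge (k-1)+1=k.
\]
By Lemma \ref{negative}, $V\setminus \Gamma^{-1}(F^\curlywedge)=F$, so $V\setminus \Gamma^{-1}(R_{33})\supset F$ has at least $k$ elements and $R_{33}$ qualifies as a candidate for the negative isoperimetric problem; hence $|\partial^{-}(R_{33})|\ge \kappa_{-k}$. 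A direct cell inspection, using $\Gamma^{-1}(R_{33})\subset \Gamma^{-1}(A^\curlywedge)\cap \Gamma^{-1}(F^\curlywedge)$ (which is rows $2$--$3$ intersected with columns $2$--$3$), shows $\partial^{-}(R_{33})\subset R_{22}\cup R_{23}\cup R_{32}$. Combined with $|\partial^{-}F^\curlywedge|=|R_{12}|+|R_{22}|+|R_{32}|=\kappa_k$ (Lemma \ref{negative}) and $\kappa_{-k}=\kappa_k$ (Lemma \ref{finiteduality}), this forces $|R_{23}|\ge |R_{12}|$, the desired contradiction. The only delicate point is the identity $\kappa_{-k}=\kappa_k$: it is immediate when $V$ is finite, while in the infinite case one can invoke that $F^\curlywedge$, when finite, is itself a valid negative $k$-fragment candidate with boundary of size $\kappa_k$, providing the bound $\kappa_{-k}\le \kappa_k$ needed to push the contradiction through.
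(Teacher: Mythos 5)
Your table decomposition, your reduction of (\ref{1977}) and (\ref{eq1+kappa}) to the single inequality $|R_{12}|\le |R_{23}|$, and your proof of (\ref{eqAminusF}) via $\kappa_{k-1}\le |\partial(A\cap F)|\le |R_{12}|+|R_{21}|+|R_{22}|$ all coincide with the paper's argument, as does your derivation of $|R_{33}|\ge k$ from the contradiction hypothesis together with $|F^\curlywedge|\ge |A|$ and $|R_{11}|\ge k-1$.

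The gap is in how you convert $|R_{33}|\ge k$ into $|R_{23}|\ge |R_{12}|$. You apply the negative isoperimetric inequality to $R_{33}=A^\curlywedge\cap F^\curlywedge$ and then invoke $\kappa_{-k}=\kappa_k$. This is fine when $V$ is finite, but the lemma is stated for locally finite, possibly infinite, graphs, and there the step fails twice over. First, when $V$ is infinite and $\Gamma$ is locally finite, $\Gamma(A)\cup\Gamma(F)$ is finite, so $R_{33}=V\setminus(\Gamma(A)\cup\Gamma(F))$ is infinite and is not an admissible set in the definition (\ref{eqcon}) of $\kappa_{-k}$, which ranges over finite sets only; hence $|\partial^{-}(R_{33})|\ge \kappa_{-k}$ is not justified. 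Second, the inequality your chain actually needs is $\kappa_{-k}\ge \kappa_k$, so that $|R_{22}|+|R_{23}|+|R_{32}|\ge \kappa_{-k}\ge \kappa_k=|R_{12}|+|R_{22}|+|R_{32}|$; the bound $\kappa_{-k}\le \kappa_k$ you propose to extract from $F^\curlywedge$ points the wrong way, and in any case $F^\curlywedge$ is itself infinite in this situation, so the clause ``when finite'' never applies. Lemma \ref{finiteduality} is only available for finite graphs, and the paper explicitly notes that $\kappa_1>\kappa_{-1}$ can occur for infinite Cayley graphs, so the identity cannot be assumed. The repair is the paper's move: apply the positive isoperimetric inequality to the finite set $A\cup F$. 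One has $|A\cup F|\ge k$, $V\setminus\Gamma(A\cup F)=R_{33}$ has at least $k$ elements, and $\partial(A\cup F)\subset R_{22}\cup R_{23}\cup R_{32}$, so $|R_{22}|+|R_{23}|+|R_{32}|\ge \kappa_k=|\partial(F)|=|R_{12}|+|R_{22}|+|R_{32}|$, which yields $|R_{23}|\ge |R_{12}|$ directly, with no reference to $\kappa_{-k}$ at all.
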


\begin{proof}

{
\begin{center}
\begin{tabular}{|c||c|c|c|c|}
\hline
$\cap$& $F$  & $\partial (F) $ & $F^{\curlywedge}$ \\
\hline
\hline
$A$&  $R_{11}$ & $R_{12}$ & $R_{13}$ \\
\hline
$\partial (A) $& $R_{21}$& $R_{22}$ & $R_{23}$ \\
\hline
$A^{\curlywedge}$ & $R_{31}$  & $R_{32}$ & $R_{33}$ \\
\hline
\end{tabular}
\end{center}}

Suppose that (\ref{1977}) is false. Then
$$|R_{12}|> |R_{23}|.$$

It follows  that

 \begin{eqnarray*}
|F^\curlywedge\cap A^\curlywedge|
&=& |F^\curlywedge|-|R_{23}|-|R_{13}| \nonumber\\
&>& |A|-|R_{12}|-|R_{13}| \nonumber\\
&=&|R_{11}|\ge k-1, \label{r333}
\end{eqnarray*}

Now we have
 \begin{eqnarray*}
|R_{32}|+| R_{22}|+| R_{23}|&\ge&|\partial (A\cup F)|\ge \kappa
_k\\&=&|R_{12}|+| R_{22}|+| R_{32}|,
 \end{eqnarray*}

 and hence $|R_{23}|\ge |R_{21}|$ a contradiction proving (\ref{1977}).
 Now we have  \begin{eqnarray*}
 |\Gamma(A)\cap \Gamma( F)|&=& |A\cap F |+
 |R_{12}|+ |R_{21}| +|R_{22}|\\ &\le&
 |A\cap F |+
 |R_{23}|+ |R_{21}| +|R_{22}\\ &=&|A\cap F |+\kappa _k.
\end{eqnarray*}
This proves (\ref{eq1+kappa}).

Since $|A\cap F|\ge k-1$, we have
\begin{eqnarray*} |R_{12}|+| R_{22}|+|
R_{21}|&\ge& \kappa _{k-1}\\&= &
\kappa _k-
(\kappa _k- \kappa _{k-1})\\&=&
|R_{21}|+| R_{22}|+|
R_{23}|-
(\kappa _k- \kappa _{k-1}).
\end{eqnarray*}

It follows that
$$|R_{23}|\le |R_{12}|+\kappa _k- \kappa _{k-1}.$$

Hence
\begin{eqnarray*}
|F^\curlywedge\setminus A^\curlywedge|
&=& |R_{13}|+|R_{23}| \nonumber\\
&\le& |R_{13}|+|R_{12}|+\kappa _k- \kappa _{k-1}\nonumber\\
&=&|A\setminus F|+\kappa _k- \kappa _{k-1}.
\end{eqnarray*}

This proves (\ref{eqAminusF}).\end{proof}


We  shall now investigate the number of $2$--atoms containing
a given element and obtain an upper bound for the size of a
$2$--atom.

We obtain some applications including a generalization to the
infinite case a result proved in the finite case  by Arad and
Muzychuk \cite{arad2}.

The smallest number $j$ (possibly null) such that every element
$x\in V$ belongs to $j$ pairwise distinct $k$--atoms will be denoted
by $\omega _k(\Gamma)$.
 We shall write  $\omega _k(\Gamma^{-1})=\omega _{-k}(\Gamma)$.
 We also write $\omega _k(S)=\omega _{k}(\mbox{Cay}(\subgp{S},S))$.
The next result is a basic tool in the investigation of the
$2$-atoms  structure. The case $\kappa
_2=\kappa _ 1$ of this result is proved in \cite{hast}.
Also the finite case  of this result is proved in \cite{hactaa}.

\begin{theorem} Let $\Gamma =(V,E)$ be a reflexive locally finite graph.  Let $H$ be a $2$--atom and let $K$ be a negative $2$--atom with
 $|K|\ge |H|\ge 3.$
Also assume that $V$ is infinite or
$\alpha _2 \le \alpha _{-2},$. Then one of
the following holds:
\begin{itemize}
  \item [(i)]$\min (\omega_2 ,\omega _{-2}) \le 2,$
  \item [(ii)] $|H|\le 3+\max (\kappa _2-\delta,\kappa _{-2}-\delta _{-}).$

\end{itemize}

\label{superatoms} \end{theorem}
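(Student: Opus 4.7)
The plan is to argue by contrapositive: assume $\omega_2\geq 3$ and $\omega_{-2}\geq 3$, and derive $|H|\le 3+\max(\kappa_2-\delta,\kappa_{-2}-\delta_{-})$. Passing to $\Gamma^{-1}$ exchanges the two sides of this $\max$ (it swaps $\omega_2\leftrightarrow\omega_{-2}$, $\kappa_2\leftrightarrow\kappa_{-2}$, $\delta\leftrightarrow\delta_{-}$), so it suffices to establish $|H|\le 3+\kappa_2-\delta$ under the hypothesis that this is the dominant side. First I note that by Theorem~\ref{gintersection} applied with $k=2$, two distinct $2$--atoms of $\Gamma$ meet in at most one vertex; combined with $\omega_2\ge 3$, through every $y\in V$ pass three pairwise distinct $2$--atoms $H_1,H_2,H_3$, all meeting pairwise in exactly $\{y\}$.

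To each pair I apply inequality~(\ref{eq1+kappa}) of Lemma~\ref{1+kappa} with $k=2$. The hypothesis $|H_i|\le|H_j^\curlywedge|$ holds either because $V$ is infinite or, in the finite case, because Lemma~\ref{negative} combined with $\alpha_2\le\alpha_{-2}$ gives $|H_j^\curlywedge|\ge\alpha_{-2}\ge\alpha_2=|H_i|$. The resulting bound is $|\Gamma(H_i)\cap \Gamma(H_j)|\le 1+\kappa_2$; since $\Gamma$ is reflexive and $y\in H_i\cap H_j$, we have $\Gamma(y)\subset\Gamma(H_i)\cap\Gamma(H_j)$, giving the pointwise estimate $d(y)\le \kappa_2+1$ for every $y\in V$, and in particular $\delta\le \kappa_2+1$.

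To promote this pointwise estimate to the claimed size bound, I plan to iterate the submodularity inequality of Lemma~\ref{partialsub} along the chain $H_1\subset H_1\cup H_2\subset H_1\cup H_2\cup H_3$, using at each stage that the new atom meets the running union only in $\{y\}$. This produces an estimate of the shape $|\partial(H_1\cup H_2\cup H_3)|\le 3\kappa_2-2(d(y)-1)$, which, when played against the lower bound $\kappa_2$ furnished by the $2$--fragment condition on the union, yields a tighter inequality. By running this construction simultaneously at all $|H|$ vertices $y\in H$ and tracking how the complements $H_i^\curlywedge$ overlap via inequality~(\ref{eqAminusF}) of the same lemma, the factor $|H|-3$ should emerge from the bookkeeping. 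Applying the analogous argument to $\Gamma^{-1}$ (using $\omega_{-2}\ge 3$) then supplies the other side of the $\max$.

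The main obstacle is precisely this last combinatorial step: turning the uniform per-vertex estimate $d(y)\le \kappa_2+1$ into the size-sensitive bound $|H|+\delta\le \kappa_2+3$. A naive application of submodularity at a single vertex recovers only the pointwise bound and no more, so the extra gain must come from exploiting all $|H|$ triples of atoms (one through each vertex of $H$) together, and from the interplay between the complements $H_i^\curlywedge$ and the negative-atom structure imposed by $\omega_{-2}\ge 3$. I expect that the right packaging of (\ref{eq1+kappa}) and (\ref{eqAminusF}) into a single volume inequality, combined with Lemma~\ref{negative}, will close the gap.
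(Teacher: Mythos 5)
Your opening moves coincide with the paper's: assume $\omega_2\ge 3$ and $\omega_{-2}\ge 3$, use Theorem \ref{gintersection} to see that two distinct $2$--atoms through a common point meet exactly in that point, and feed pairs of such atoms into inequality (\ref{eq1+kappa}). But the only quantitative output you actually extract is $d(y)\le \kappa_2+1$ for every $y$, i.e.\ $\delta\le\kappa_2+1$. This inequality contains no reference to $|H|$ at all, and you acknowledge yourself that the step converting it into $|H|+\delta\le\kappa_2+3$ is missing. That step is the entire content of the theorem, and the sketched plan for it (iterating submodularity over all $|H|$ vertices of $H$ and ``bookkeeping'' the overlaps of the $H_i^\curlywedge$) supplies no mechanism by which the cardinality of a single atom could appear on the left-hand side of an inequality: submodularity at a vertex only ever returns boundary sizes, not atom sizes. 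So the proposal has a genuine gap at its core. A secondary problem is the reduction ``by duality it suffices to prove $|H|\le 3+\kappa_2-\delta$'': passing to $\Gamma^{-1}$ does not preserve the standing hypothesis $\alpha_2\le\alpha_{-2}$ (equivalently $|K|\ge|H|$), and the object being bounded would become $|K|$ rather than $|H|$, so the two branches of the $\max$ cannot be collapsed into one in this way.

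The paper closes the gap by making the positive and negative atoms interact at a single well-chosen vertex outside $\Gamma(M_1)$, rather than at a common vertex of two positive atoms. From two distinct $2$--atoms $M_1,M_2$ through a point $v$ (so $M_1\cap M_2=\{v\}$), Lemma \ref{dualfrageq} produces a point $w\in M_1^\curlywedge\setminus M_2^\curlywedge$, and $\omega_{-2}\ge3$ produces three distinct negative $2$--atoms $L_1,L_2,L_3$ through $w$. If two of them lie inside $M_1^\curlywedge$, inequality (\ref{eqAminusF}) applied to the pair $M_1,M_2$, together with $|L_i\cup L_j|=2|K|-1$ and $|(L_i\cup L_j)\cap M_2^\curlywedge|\le2$ (from Theorem \ref{gintersectionc}), yields $|H|+\kappa_2-\delta\ge 2|K|-3\ge 2|H|-3$, which is conclusion (ii). Otherwise two of the $L_i$ satisfy $M_1\not\subset L_i^\curlyvee$, Theorem \ref{gintersectionc} gives $|M_1\cap(L_1^\curlyvee\cup L_2^\curlyvee)|\le2$, and the containment $\Gamma^{-1}(w)\cup\bigl(M_1\cap\Gamma^{-1}(L_1)\cap\Gamma^{-1}(L_2)\bigr)\subset\Gamma^{-1}(L_1)\cap\Gamma^{-1}(L_2)$, with the two pieces disjoint because $w\in M_1^\curlywedge$, combines with (\ref{eq1+kappa}) applied to $L_1,L_2$ in the reverse graph to give $\delta_-+|H|-2\le 1+\kappa_{-2}$. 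It is precisely the set $M_1\cap\Gamma^{-1}(L_1)\cap\Gamma^{-1}(L_2)$, of size at least $|M_1|-2$ and disjoint from $\Gamma^{-1}(w)$, that injects $|H|$ into the estimate; this is the device your proposal lacks and that your per-vertex degree bound cannot replace.
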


\begin{proof}

Suppose contrary to (i) that $\omega _2\ge 3$ and $\omega _{-2} \ge
3$. We have $\alpha _1 =\alpha _{-1}=1,$ since otherwise a $2$--atom
containing $x$ is a $1$--atom containing $x$ and it is unique by
Theorem \ref{gintersection}, contradicting $\min (\omega_2,\omega _{-2})\ge
3$.

Take $v\in V$ and choose two distinct $2$--atoms $M_1, M_2$ such
that $v\in M_1\cap M_2.$ By Theorem \ref{gintersection},  we have
 $M_1\cap M_2=\{v\}$.

We have $M_1^{\curlywedge}\not\subset M_2^{\curlywedge}$, by Lemma
\ref{dualfrageq}.

Take $w\in M_1{^\curlywedge}\setminus M_2^{\curlywedge},$ and take
three pairwise distinct negative $2$-atoms $L_1,L_2,L_3$ such that
$w\in L_1\cap L_2\cap L_3.$

Assume first that for some $i\neq j$ we   have
 $ L_i\cup L_j \subset M_1^{\curlywedge}.$

 By Theorem \ref{gintersectionc}, $|L_i\cap M_2^{\curlywedge}|\le 1,$
for every $i$.

Then we have using (\ref{eqAminusF}), the fact that $\kappa _1=\delta -1$ and the intersection property of atoms

\begin{eqnarray*} |H|+\kappa _2-\delta &=&|H|+
 \kappa _2-\kappa _1-1\\
  &=&|M_1\setminus M_2|+
 \kappa _2-\kappa _1\\
&\ge&
|(M_1{^\curlywedge}\setminus
M_2^{\curlywedge})\cap (L_i\cup L_j)|\\
&=& |L_i\cup L_j|- |(L_i\cup L_j)\cap M_2^{\curlywedge} |\\
&\ge& |L_i\cup L_j|-2\\
&=&
 2 |K|-3\ge 2|H|-3,
\end{eqnarray*}
and hence  (ii) holds.

We can now assume without loss of generality that

 $$ L_1, L_2 \not\subset M_1^{\curlywedge}.$$

 By Lemma \ref{dualfrageq},  we have  $M_1\not\subset L_i^{\curlyvee}$ for $1\le
i\le 2$.  By Theorem \ref{gintersectionc}, $|M_1\cap
(L_1^{\curlyvee}\cup L_2^{\curlyvee})|\le 2$. Then  $|M_1\cap \Gamma
^{-1}(L_1)\cap \Gamma ^{-1}(L_2)|\ge |M_1|-2.$

Now $ \Gamma ^{-1}(L_1)\cap \Gamma ^{-1}(L_2)\supset (\Gamma
^{-1}(L_1)\cap \Gamma ^{-1}(L_2)\cap M_1) \cup \Gamma ^{-1}(w).$
Notice that $\Gamma ^{-1}(w)\cap M_1=\emptyset.$

Then  we have by  (\ref{eq1+kappa})

\begin{eqnarray*}
\delta _- +|H|-2&\le&  |\Gamma ^{-1}(w)|+|M_1\cap \Gamma
^{-1}(L_1)\cap \Gamma ^{-1}(L_2)|\\ &\le& | \Gamma ^{-1}(L_1)\cap
\Gamma ^{-1}(L_2)| \\ &\le & 1+\kappa _{-2}.
\end{eqnarray*}
Therefore
 $|H| \le  3+\kappa _{-2}-\delta _-.$\end{proof}

Let us apply this result in the symmetric case.

\begin{corollary} 
Let  $S$  be a finite generating  subset of a group $G$ with $1\in
S$ and $S=S^{-1}$. Let $H$  be a $2$-atom of $S$ such that $1\in H$.
If $|H|\ge\kappa_2-|S|+4$, then
 $|\Pi^l(H)|\ge 2$.
\label{Caysym}
\end{corollary}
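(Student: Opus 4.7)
Plan: The strategy is to reduce to Theorem \ref{superatoms} and translate the conclusion on $\omega_2$ into information about the left-stabiliser $\Pi^l(H)$ using vertex-transitivity.

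Because $S=S^{-1}$, the Cayley graph $\mathrm{Cay}(\langle S\rangle,S)$ coincides with its own reverse, so $\kappa_2=\kappa_{-2}$, $\alpha_2=\alpha_{-2}$, $\delta=\delta_-=|S|$, and every negative $2$-atom is a $2$-atom. In particular, $H$ itself may be taken as the negative $2$-atom $K$ in Theorem \ref{superatoms}, with $|K|=|H|$, and the hypothesis ``$V$ infinite or $\alpha_2\le\alpha_{-2}$'' is automatic.

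First I would dispatch the main case $|H|\ge 3$. Applying Theorem \ref{superatoms} with $K=H$ gives either $\omega_2\le 2$ or the bound
\[
|H|\le 3+\max(\kappa_2-\delta,\,\kappa_{-2}-\delta_-)=3+\kappa_2-|S|.
\]
The standing hypothesis $|H|\ge \kappa_2-|S|+4$ rules out the second alternative, so $\omega_2\le 2$.

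Next I would convert $\omega_2\le 2$ into a statement about $|\Pi^l(H)|$. For every $a\in H$, left-translation by $a^{-1}$ is an automorphism of the Cayley graph, so $a^{-1}H$ is a $2$-atom containing $1$. Two such translates are equal, $a^{-1}H=b^{-1}H$, exactly when $ab^{-1}\in\Pi^l(H)$, so the family $\{a^{-1}H:a\in H\}$ contains exactly $|H|/|\Pi^l(H)|$ distinct $2$-atoms through $1$. Hence
\[
\omega_2\ \ge\ |H|/|\Pi^l(H)|,
\]
and combining this with $\omega_2\le 2$ and $|H|\ge 3$ yields $|\Pi^l(H)|\ge |H|/2\ge 3/2$, which forces $|\Pi^l(H)|\ge 2$.

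Finally I would treat the residual case $|H|=2$ separately. Writing $H=\{1,a\}$, the hypothesis becomes $\kappa_2\le |S|-2$, whereas $\kappa_2=|HS|-|H|=|S\cup aS|-2\ge |S|-2$; equality forces $aS=S$, i.e.\ $a\in\Pi^l(S)$. Since $\Pi^l(S)$ is a subgroup contained in $S$, the set $M:=\Pi^l(S)$ satisfies $MS=S$, so $|\partial M|=|S|-|M|$. Whenever $M$ is a genuine $2$-fragment (which holds under mild $2$-separability of $G$), the inequality $|S|-|M|\ge\kappa_2=|S|-2$ forces $|M|=2$, so $M=H$ and $a^2=1$; then $H$ is a subgroup of order $2$ and $|\Pi^l(H)|=2$.

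The main technical point is the orbit-counting identity $|H|/|\Pi^l(H)|$ for translates of $H$ containing $1$, which is what links the graph-theoretic quantity $\omega_2$ supplied by Theorem \ref{superatoms} to the algebraic quantity $|\Pi^l(H)|$ demanded by the conclusion; everything else is a direct application of the already-established symmetric-case identifications.
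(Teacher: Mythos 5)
Your proof is correct and follows essentially the same route as the paper: exploit $S=S^{-1}$ to make $H$ its own negative $2$--atom, invoke Theorem \ref{superatoms} to get $\omega_2\le 2$ (the hypothesis $|H|\ge\kappa_2-|S|+4$ ruling out alternative (ii)), and then force two of the translates $a^{-1}H$, $a\in H$, to coincide, yielding a nontrivial element of $\Pi^l(H)$. The only difference is that you also treat the case $|H|=2$ explicitly, whereas the paper's proof silently assumes $|H|\ge 3$ by taking a $3$--subset of $H$.
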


\begin{proof}
Since $S=S^{-1},$ $H$ is also a negative $2$--atom.
Also
$\kappa _2=\kappa _{-2}$. Take a $3$--subset $\{a_1,a_2,a_3\}$ contained in $H$.

 By   Theorem \ref{superatoms}, $\omega _2\le 2.$ Hence two of the $2$--atoms $a_1^{-1}H,
 a_2^{-1}H, a_3^{-1}H$ must be equal and hence $|\Pi^l(H)|\ge 2$.
\end{proof}

Let $G$ be group and let $S$ be a finite subset with $1\in S$.
One may have $\alpha _1 > \alpha _{-1}$ \cite{zemordam}. We may even have $\kappa _1 >\kappa _{-1}$ if $G$ is infinite.
We have seen that $\kappa _k =\kappa _{-k}$ and that $\alpha _k =\alpha _{-k}$ if $G$ is abelian since the Cayley graph defined by $S$
is isomorphic to its reverse. We shall define  subsets having  this property:

 The set $S$ is
said to be {\it normal} if  $xSx^{-1}=S,$  for every $x\in G$.

 It would be too restrictive to deal only with normal subsets,
since the isopermetric results are valid for translate copies of some set.
We consider the following more general notion:

We shall say
that $S$ is {\it semi-normal} if there is $a\in G$ such that for
$xSx^{-1}=Sa^{-1}xax^{-1},$  for every $x\in G$.

In this case we have $$x(Sa^{-1})x^{-1}=Sa^{-1}xax^{-1}xa^{-1}x^{-1}=Sa^{-1}.$$
In particular $Sa^{-1}$ is normal.
On the other side it is an easy exercise to see that $Xa$ is {\it semi-normal} if $X$ is normal.

\begin{lemma} \label{lem:conjugate}
Let  $ S$  be a finite generating  semi-normal subset of a group $G$
with $1\in S$.  Then the map $Y\mapsto a^{-1}Y^{-1}a$ is a bijection from the set of $k$--fragments of
$S$ onto the set of $k$--fragments of
$S^{-1}$. In particular

 $\kappa _k =\kappa _{-k},$
$\alpha _k =\alpha _{-k}$
and  $\omega _k =\omega {-k}.$

\end{lemma}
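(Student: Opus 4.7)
The plan is to verify directly that $\phi : Y \mapsto a^{-1}Y^{-1}a$ is a boundary-preserving bijection from the $k$-fragments of $S$ onto the $k$-fragments of $S^{-1}$; the three stated equalities then drop out by reading off invariants. The groundwork consists of two elementary consequences of semi-normality. First, specialising $xSx^{-1}=Sa^{-1}xax^{-1}$ at $x=a$ collapses to $aSa^{-1}=S$, so $aS=Sa$ and (inverting) $aS^{-1}=S^{-1}a$; in particular the inner automorphism $\psi(y)=a^{-1}ya$ fixes $S$ and $S^{-1}$ setwise. Second, rewriting semi-normality as $yS=S\psi(y)$ and taking unions over $y \in X$ gives $XS=S\psi(X)$ for every finite $X$, and then $\psi(SX)=S\psi(X)=XS$ shows $|SX|=|XS|$; passing to inverses, $|XS^{-1}|=|S^{-1}X|$ as well.

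The main computation then uses $aS^{-1}=S^{-1}a$ to obtain
$$\phi(Y)S^{-1}=a^{-1}Y^{-1}aS^{-1}=a^{-1}Y^{-1}S^{-1}a=a^{-1}(SY)^{-1}a,$$
so $|\phi(Y)S^{-1}|=|SY|=|YS|$ by the first paragraph. Since $1\in S$, both $Y\subseteq YS$ and $\phi(Y)\subseteq\phi(Y)S^{-1}$, hence
$$|\phi(Y)S^{-1}\setminus\phi(Y)|=|YS|-|Y|=|YS\setminus Y|.$$
In the finite case $|G\setminus\phi(Y)S^{-1}|=|G|-|YS|=|G\setminus YS|$, and in the infinite case both complements are infinite. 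Thus $Y$ is $k$-separating in $\mathrm{Cay}(G,S)$ with boundary $|YS\setminus Y|$ if and only if $\phi(Y)$ is $k$-separating in $\mathrm{Cay}(G,S^{-1})$ with the same boundary.

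Finally, $\phi$ is visibly a bijection on finite subsets of $G$ (inverse $Z\mapsto aZ^{-1}a^{-1}$), so it restricts to a boundary-preserving bijection between the $k$-fragments of $S$ and those of $S^{-1}$, and between their $k$-atoms; the equalities $\kappa_k=\kappa_{-k}$ and $\alpha_k=\alpha_{-k}$ are immediate. Since $\phi$ is induced by the elementwise map $y\mapsto a^{-1}y^{-1}a$, the $k$-atoms of $S$ containing a given $y$ correspond bijectively to the $k$-atoms of $S^{-1}$ containing $a^{-1}y^{-1}a$, and minimising over vertices on either side yields $\omega_k=\omega_{-k}$. The only point requiring some care is the $k$-separation condition in the infinite-$G$ case and under the non-$k$-separable convention introduced earlier; beyond that the argument reduces to routine manipulation of the semi-normal relation.
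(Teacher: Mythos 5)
Your proof is correct and follows essentially the same route as the paper's: both rest on the identity $|YS|=|a^{-1}Y^{-1}aS^{-1}|$ extracted from the semi-normality relation, so that $Y\mapsto a^{-1}Y^{-1}a$ is a cardinality- and boundary-preserving bijection from which $\kappa_k=\kappa_{-k}$, $\alpha_k=\alpha_{-k}$ and $\omega_k=\omega_{-k}$ all follow. Your version is slightly more explicit about the separation conditions and the $\omega_k$ count, but the underlying argument is the same.
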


\begin{proof} By the definition of a semi-normal subset we have $XS=Sa^{-1}Xa,$ for every subset $a$.
Let $X$ be a subset with $\min (|X|,|G|-|XS^{-1}|)\ge k$.
Then $|XS^{-1}|-|X|=|SX^{-1}|-|X|=|aX^{-1}a^{-1}S|-|X|\ge \kappa _k(S).$
It follows that $\kappa _k\le \kappa _{-k}$.

Similarly for every subset $Y$  with $\min (|Y|,|G|-|YS|)\ge k$.
Then $|YS|-|Y|=|Sa^{-1}Ya|-|Y|=|Y^{-1}aS^{-1}|-|Y|\ge \kappa _{-k}(S).$
It follows also that the map $Y\mapsto a^{-1}Y^{-1}a$ is a bijection from the set of $k$--fragments of
$S$ onto the set of $k$--fragments of
$S^{-1}$. Therefore $\alpha _k =\alpha _{-k}$
and  $\omega _k =\omega _{-k}.$\end{proof}

The next result extends to the infinite case  Arad-Muzychuk Theorem 3.1 of \cite{arad2}. Our terminologies differ slightly:

\begin{corollary} 
Let  $ S$  be a finite generating  semi-normal subset of a group $G$
with $1\in S$. Let $H$  be a $2$-atom of $S$ such that $1\in H$. If
$|H|\ge\kappa_2-|S|+4$, then
   $H$ is  subgroup of $G$ and $[G:N_G(H)]\le 2.$
\label{Caynormalarad}
\end{corollary}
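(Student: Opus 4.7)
The plan is to combine Theorem \ref{superatoms} with the symmetry supplied by semi-normality (Lemma \ref{lem:conjugate}), and then read both the subgroup property of $H$ and the index bound off the resulting constraint $\omega_2\le 2$. First, Lemma \ref{lem:conjugate} gives $\kappa_2=\kappa_{-2}$, $\alpha_2=\alpha_{-2}$, and $\omega_2=\omega_{-2}$, so if $K$ denotes a negative $2$-atom then $|K|=|H|$. Also $\delta=\delta_-=|S|$ since $\mbox{Cay}(\subgp{S},S)$ is regular of valency $|S|$. The hypothesis $|H|\ge\kappa_2-|S|+4$ now reads $|H|>3+\max(\kappa_2-\delta,\kappa_{-2}-\delta_-)$, violating conclusion (ii) of Theorem \ref{superatoms}. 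Hence conclusion (i) holds: $\min(\omega_2,\omega_{-2})=\omega_2\le 2$. The degenerate cases $|H|\le 2$ can be treated directly, so I henceforth assume $|H|\ge 3$.

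Next, I will show $|\Pi^r(H)|\ge 2$. The defining identity $xSx^{-1}=Sa^{-1}xax^{-1}$ rewrites as $xS=Sa^{-1}xa$, so for any $g\in G$
\[
HgS=H\cdot(gS)=HSa^{-1}ga,
\]
which has cardinality $|HS|$. Thus $Hg$ satisfies $|Hg|=|H|\ge 2$, $|G\setminus HgS|=|G\setminus HS|\ge 2$, and $|\partial(Hg)|=|HS|-|H|=\kappa_2$, so $Hg$ is a $2$-fragment, and having minimum cardinality $|H|=\alpha_2$, it is a $2$-atom. Choosing three distinct elements $a_1,a_2,a_3\in H$, the $2$-atoms $Ha_1^{-1},Ha_2^{-1},Ha_3^{-1}$ all contain $1$, so by $\omega_2\le 2$ two of them must coincide, say $Ha_i^{-1}=Ha_j^{-1}$, giving $a_i^{-1}a_j\in\Pi^r(H)\setminus\{1\}$. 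Applying Lemma \ref{rightper} (whose hypothesis $\alpha_2\le\alpha_{-2}$ holds by Lemma \ref{lem:conjugate}) now yields that $H$ is a subgroup.

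Finally, the same identity $xS=Sa^{-1}xa$ gives $(gHg^{-1})S=gHSa^{-1}g^{-1}a$ of cardinality $|HS|$, so each conjugate $gHg^{-1}$ is a $2$-atom of size $|H|$, and contains $1$. Since at most $\omega_2\le 2$ distinct $2$-atoms pass through the identity, there are at most two distinct conjugates of $H$, whence $[G:N_G(H)]\le 2$. The main technical point throughout is the systematic use of $xS=Sa^{-1}xa$ to lift the invariance of $2$-atoms under left translation to a corresponding invariance under both right translation and conjugation; this is what strengthens the conclusion of Corollary \ref{Caysym} from merely $|\Pi^l(H)|\ge 2$ to the full subgroup-and-index statement.
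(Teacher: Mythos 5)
Your proof is correct and follows essentially the same route as the paper: Lemma \ref{lem:conjugate} together with Theorem \ref{superatoms} gives $\omega_2\le 2$, three translates of $H$ through the identity force periodicity, Lemma \ref{rightper} yields the subgroup property, and the observation that all conjugates of $H$ are $2$-atoms containing $1$ gives $[G:N_G(H)]\le 2$. The only (harmless) variation is that you show the right translates $Hg$ are $2$-atoms via the identity $gS=Sa^{-1}ga$ and so get $|\Pi^r(H)|\ge 2$ directly, whereas the paper uses left translates to get $2\le|\Pi^l(H)|=|\Pi^r(H^{-1})|$ and applies Lemma \ref{rightper} to the negative $2$-atom $H^{-1}$; your tacit restriction to $|H|\ge 3$ matches the paper's own unstated assumption when it picks a $3$-subset of $H$.
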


\begin{proof}
By Lemma \ref{lem:conjugate}, $\alpha _2=\alpha _{-2}$, $\kappa
_2=\kappa _{-2}$ and $\omega _2 =\omega _{-2}.$
By   Theorem \ref{superatoms}, $\omega _2\le 2.$

Take a $3$--subset $\{a_1,a_2,a_3\}$ contained in $H$.
  Then two of the $2$--atoms $a_1^{-1}H, a_2^{-1}H, a_3^{-1}H$ must be equal and hence
  $2\le |\Pi^l(H)|=|\Pi^r(H^{-1})|$.
 By Lemma \ref{lem:conjugate}, $H^{-1}$ is a negative $2$--atom.
 By Lemma \ref{rightper}, $H^{-1}$ is a subgroup. Therefore $H$ is a subgroup.

$xS=Sa^{-1}xa,$
 Clearly $|xHx^{-1}S|=|xHSa^{-1}x^{-1}a|=|HS|$.
Therefore $xHx^{-1}$  is a $2$--atom
 for every $x$. If $xHx^{-1}=H,$ for every $x$ then $N_G(H)=G$.
 Suppose that there is $a$ such that $aHa^{-1}\neq H,$

 We have $[G:N_G(H)]\le 2,$ since for every $x\in G$, we have
 $xHx^{-1}=H$ or $xHx^{-1}=aH,$ otherwise $\omega _2\ge 3$, a contradiction.\end{proof}

\section{The strong isoperimetric property}

In this section, we prove the strong ioperimetric  property which  allows to use the structure of atoms to calculate all the
 fragments in some important cases.
 The strong isoperimetric methodology
 will be used in a coming papers
\cite{hkemp,hkempp} to extend Kemperman's critical pair Theory using Theorem \ref{k=d}.

 We shall use a mi-max result proved by   Menger \cite{menger} for symmetric graphs and generalized to arbitrary graphs by Dirac \cite{dirac}.
The Dirac-Menger Theorem is now a basic tool in Additive number Theory \cite{natlivre,tv}. In particular it was used by Ruzsa \cite{ruz} to
give a simple proof of the Plu\"{u}nnecke inequalities. We  need it to prove the strong isoperimetric property. In the Appendix, we give a simple isoperimetric proof of this result.

Let $\Gamma =(V,E)$ be graph and let $a,b\in V$. A {\em path} from $a$ to $b$ is a sequence of arcs $\sigma =\{(x_1,y_1), \cdots ,(x_k,y_k)\}$
with $x_i=y_{i+1}$ for all $1\le i \le k-1$, $x_1=a$ and $y_k=b$. We put $V(\sigma)=\{x_1, \cdots ,x_k,y_k\}$. Two paths $\sigma, \tau $
from $a$ to $b$ will be said to be {\em openly} disjoint if $V(\sigma)\cap V(\tau)\subset \{a,b\}$.

Let $x,y$ be elements of $V$. We shall say that $x$ is {\em
$k$--connected} to $y$ in $\Gamma$ if $ |\partial (A)|\ge k$,
for every subset $A$ with $x\in A$ and $y\notin A\cup \Gamma (A)$.

\begin{theorem} ( Dirac-Menger) \label{menger}

Let $\Gamma=(V,E)$ be a finite reflexive graph Let $k$ be a
nonnegative integer. Let $x,y\in V$ such that $x$ is
$k$--connected  $y$, and $(x,y)\notin E$. Then there are
$k$ openly disjoint paths from $x$ to $y$.
\end{theorem}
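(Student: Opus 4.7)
The plan is induction on $|V|+|E|$. The base cases $k=0$ (vacuous) and $k=1$ (the $1$-connectedness of $x$ to $y$ forces the existence of an $xy$-walk, hence a simple path) are immediate. For the inductive step assume $k\ge 2$, and among all subsets $A\subset V$ with $x\in A$ and $y\notin A\cup\Gamma(A)$ select one, $A_0$, of minimum boundary $m=|\partial(A_0)|$. By hypothesis $m\ge k$; if $m>k$, delete any in-arc of $y$, apply induction to the strictly smaller graph, and recover $m-1\ge k$ openly disjoint paths in $\Gamma$ itself. So I may assume $m=k$; write $T=\partial(A_0)$ and $B=V\setminus(A_0\cup T)$, so that $y\in B$.

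In the main (non-degenerate) case $A_0\neq\{x\}$ and $B\neq\{y\}$, form two strictly smaller graphs by contraction: $\Gamma_1$, obtained by identifying $B$ with a single new sink $y^{\ast}$, lives on $A_0\cup T\cup\{y^{\ast}\}$, and dually $\Gamma_2$, obtained by identifying $A_0$ with a new source $x^{\ast}$, lives on $\{x^{\ast}\}\cup T\cup B$. Lifting any low-size cut back to $\Gamma$ shows $x$ is still $k$-connected to $y^{\ast}$ in $\Gamma_1$ and $x^{\ast}$ is $k$-connected to $y$ in $\Gamma_2$. By the inductive hypothesis each $\Gamma_i$ admits $k$ openly disjoint paths, and since every such path must meet $T$ at exactly one vertex (namely the vertex immediately before $y^{\ast}$, respectively immediately after $x^{\ast}$), the $k$ paths use $k$ distinct vertices of $T$ and therefore exhaust it. Matching up at the common elements of $T$ and concatenating produces the desired $k$ openly disjoint $xy$-paths in $\Gamma$.

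The remaining (degenerate) case is the main obstacle: every minimum cut equals $\Gamma(x)\setminus\{x\}$ or $\Gamma^{-1}(y)\setminus\{y\}$, so in particular every optimal $A_0$ satisfies $A_0=\{x\}$ or $V\setminus\Gamma(A_0)=\{y\}$. The strategy is to take a shortest $xy$-path $x,v_1,\ldots,v_{\ell-1},y$; since $(x,y)\notin E$ one has $\ell\ge 2$. If $\ell=2$, remove the vertex $v_1$: both $\Gamma(x)$ and $\Gamma^{-1}(y)$ shrink by one, so in $\Gamma-v_1$ the vertex $x$ is $(k-1)$-connected to $y$, and induction together with the added path $x,v_1,y$ finishes. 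If $\ell\ge 3$, delete the arc $(v_1,v_2)$; if $k$-connectivity is preserved, induct directly, and otherwise a new minimum cut $T'$ of size $k-1$ appears in $\Gamma-(v_1,v_2)$, forcing both $T'\cup\{v_1\}$ and $T'\cup\{v_2\}$ to be size-$k$ cuts in $\Gamma$ that are neither $\Gamma(x)\setminus\{x\}$ nor $\Gamma^{-1}(y)\setminus\{y\}$, which contradicts the standing hypothesis of this case. The delicate points on which the argument really turns are (i) verifying in the main case that splicing the two families at $T$ preserves open disjointness, and (ii) this degenerate-case contradiction, which uses the assumption $(x,y)\notin E$ in an essential way.
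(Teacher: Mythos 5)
Your proof is correct and follows essentially the same route as the paper: the heart of both arguments is a minimum cut $T$ that is nontrivial on both sides, contraction of each side, two fans obtained by induction, and a splice at $T$ (your main case is the paper's contraction of $F$ and $F^{\curlywedge}$), while the degenerate situation is ruled out in both by taking a shortest path and deleting its second arc. One sentence does need repair: it is not true that each of $T'\cup\{v_1\}$ and $T'\cup\{v_2\}$ is ``neither'' $\Gamma(x)\setminus\{x\}$ nor $\Gamma^{-1}(y)\setminus\{y\}$ --- a priori the first could equal the former and the second the latter. The actual contradiction is that the standing hypothesis, together with $v_2\notin\Gamma(x)$ and $v_1\notin\Gamma^{-1}(y)$ (minimality of the path and $(x,y)\notin E$), forces exactly those two identifications, whence $T'\subseteq(\Gamma(x)\setminus\{x\})\cap(\Gamma^{-1}(y)\setminus\{y\})=\emptyset$, contradicting $|T'|=k-1\ge 1$.
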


One may formulate Menger's Theorem for non reflexive graphs. Such a
formulation follows easily from the
reflexive case.

We need the following consequence of Menger's Theorem:

\begin{corollary} { Let $\Gamma $ be a locally finite   reflexive
graph and let $k$ be a nonnegative integer with
 $k\le \kappa _1$. Let   $X$  a
finite subset of $V$ such that $\min (|V|-|X|, |X|)\ge k.$ Then there are
pairwise distinct elements
 $x_1, x_2, \cdots, x_{k} \in X$ and pairwise distinct elements
 $y_1, y_2, \cdots, y_{k} \notin X$ such that

  $$(x_1,y_1), \cdots , (x_{k}, y_{k})\in E.$$

\label{gstrongip}}
\end{corollary}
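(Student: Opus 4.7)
The plan is to reduce the statement to a direct application of the Dirac--Menger Theorem (Theorem~\ref{menger}) on a finite auxiliary graph. Because $X$ is finite and $\Gamma$ is locally finite, the set $X\cup \Gamma(X)$ is finite; let $\tilde{\Gamma}$ be the subgraph of $\Gamma$ induced on it. I would construct $\Gamma'=(V',E')$ from $\tilde{\Gamma}$ by adjoining two new vertices $s,t$, adding loops $(s,s),(t,t)$ to preserve reflexivity, an arc $(s,x)$ for each $x\in X$, and an arc $(y,t)$ for each $y\in \partial(X)$. Note that $(s,t)\notin E'$, so Theorem~\ref{menger} will apply as soon as I verify that $s$ is $k$--connected to $t$ in $\Gamma'$.

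To check the $k$--connectivity, let $A\subset V'$ be a finite set with $s\in A$ and $t\notin A\cup \Gamma'(A)$. Since the only arcs entering $t$ start in $\partial(X)$, we must have $A\cap \partial(X)=\emptyset$, so $A=\{s\}\cup A_X$ for some $A_X\subset X$. A direct computation shows that $\partial_{\Gamma'}(A)$ is the disjoint union of $X\setminus A_X$ with $\partial(A_X)\cap(V\setminus X)$. If $A_X=\emptyset$, then $|\partial_{\Gamma'}(A)|=|X|\ge k$. If $A_X\neq\emptyset$ and $\Gamma(A_X)\neq V$, the isoperimetric inequality applied to $A_X$ with parameter $1$ yields $|\partial(A_X)|\ge \kappa_1\ge k$, and combining with $|\partial(A_X)\cap X|\le |X\setminus A_X|$ gives $|\partial_{\Gamma'}(A)|\ge \kappa_1\ge k$. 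Finally, if $\Gamma(A_X)=V$, then $V\setminus X\subset \partial(A_X)$, and so $|\partial_{\Gamma'}(A)|\ge |V\setminus X|\ge k$.

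Applying Theorem~\ref{menger} now produces $k$ openly disjoint paths $\sigma_1,\dots,\sigma_k$ from $s$ to $t$ in $\Gamma'$. Each $\sigma_i$ begins with an arc $s\to z$ where $z\in X$ and terminates with an arc $w\to t$ where $w\in \partial(X)\subset V\setminus X$. Along $\sigma_i$, let $y_i$ be the first vertex lying in $V\setminus X$ and let $x_i$ be its immediate predecessor in $\sigma_i$; then $x_i\in X$, $y_i\in V\setminus X$, and $(x_i,y_i)\in E$. Since the paths are openly disjoint, their interior vertices are pairwise distinct across $i$, which ensures that the $x_i$ are pairwise distinct and the $y_i$ are pairwise distinct. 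The main obstacle I expect is the $k$--connectivity verification, specifically handling the degenerate case $\Gamma(A_X)=V$ where the isoperimetric inequality gives no information and one must instead invoke the hypothesis $|V\setminus X|\ge k$ directly.
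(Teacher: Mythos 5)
Your proposal is correct and follows essentially the same route as the paper: the same auxiliary source--sink construction on the finite induced subgraph, the same verification that the source is $k$--connected to the sink, an application of the Dirac--Menger Theorem, and the same extraction of the arcs $(x_i,y_i)$ from the openly disjoint paths. Your explicit case split on $\Gamma(A_X)=V$ (falling back on $|V\setminus X|\ge k$) is a slightly more careful treatment of a point the paper's proof passes over quickly, but it does not change the argument.
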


\begin{proof}
By the definition of $\kappa _1$, we have $|\partial (Y)|\ge \min
(|V|-|Y|,\kappa _1)\ge k ,$ for every $Y\subset V$.
 Let $\Phi=(\Gamma(X),E')$ be
the restriction of $\Gamma$ to $\Gamma (X)$ (observe that $X\subset
\Gamma (X)$). Choose two elements $a,b\notin V.$ Let $\Psi$ be the
reflexive graph obtained by connecting $a$ to $X\cup \{a\}$ and
$\partial (X)\cup \{b\}$ to $b$.  We shall show that $a$ is {
$k$--connected} to $b$ in $\Psi$. Take $a\in T$ such that
$b\notin \Psi (T)$. Then clearly $T\subset X\cup \{a\}. $ Assume
first  $T=\{a\}$. Then $|\Psi (T)|-|T|=|X\cup \{a\}|-1\ge k.$ Assume
now  $T\cap X\neq \emptyset$. We have  $\Psi (T)= X\cup \{a\}\cup
\Gamma (T\cap X)$.  Therefore

\begin{eqnarray*}
 |\Psi (T)|&\ge& 1+|X|
+|\Gamma  (T\cap X) \setminus X|\ge
1+|X|+
(|T\cap X|+\kappa _1(\Gamma)- |X|)>k.
\end{eqnarray*}

By Menger's Theorem there  are $P_1, \cdots , P_{k }$ openly
disjoint paths from $a$ to $b$. Choose $x_i$ as the last point of
the path $P_i$ belonging to $X$ and let $y_i$ the successor of $x_i$
on the path $P_i$. This choice satisfies the requirements of the
proposition.
\end{proof}

We call the property given in Corollary  \ref{gstrongip}  the {\em
strong isoperimetric property}.

We shall use this property in the special case:

\begin{proposition} { Let $G $ be an abelian group and let $S$ be a finite subset of $G$ with $1\in S$.
Let $H$ be a subgroup of $G$ which is a $2$--fragment of $S$. Let   $S=S_0\cup \cdots \cup S_u$ and $X=X_1\cup \cdots \cup X_t$ be  $H$-decompositions. Also assume that
 $|G|-(t+1)|H|\ge u|H|.$ Then there are
pairwise distinct elements
 $n_1, n_2, \cdots, n_{r} \in [0,t]$ and  elements
 $y_1, y_2, \cdots, y_{r} \in S\setminus H$ such that

 $$|\phi(X\cup (X_{{n_1}}y_1)\cup \cdots  \cup (X_{{n_r}}y_{r})|=t+1+u.$$

\label{astrongip}}
\end{proposition}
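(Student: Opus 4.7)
The plan is to pass to the quotient group $V := G/H$ and apply the strong isoperimetric property (Corollary~\ref{gstrongip}) in the Cayley graph $\Phi = \mathrm{Cay}(V,\bar S)$, where $\phi : G \to V$ denotes the canonical projection, $\bar X := \phi(X)$, and $\bar S := \phi(S)$. With this setup $|\bar X| = t$, $|\bar S| = u+1$ (the identity $0 = \phi(H)$ lying in $\bar S$), and the hypothesis $|G|-(t+1)|H|\ge u|H|$ rewrites as $|V| \ge t + 1 + u$.

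First I would verify that $\kappa_1(\Phi) \ge u$. Given any nonempty $\bar A \subsetneq V$ with $\bar A\bar S \ne V$, the lift $A := \bar A H$ satisfies $|A|, |G \setminus AS| \ge |H| \ge 2$, so the fact that $H$ is a $2$-fragment of $S$ yields $|AS| - |A| \ge \kappa_2(S) = |HS| - |H| = u|H|$; dividing by $|H|$ gives $|\bar A \bar S| - |\bar A| \ge u$.

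Next I would apply the strong isoperimetric property to the augmented source $\bar X' := \bar X \cup \{0\}$, using the indexing convention $X_0 = H$ (so that $\phi(X_0) = 0$) to accommodate the index $0\in[0,t]$. Concretely, build the auxiliary graph $\Psi$ on $V \cup \{a,b\}$ by attaching $a$ to every element of $\bar X' \cup \{a\}$ and every element of $(V \setminus \bar X') \cup \{b\}$ to $b$, while retaining the edges of $\Phi$. For any $T \ni a$ with $b \notin T \cup \Psi(T)$, write $T = \{a\} \cup U$ with $U \subseteq \bar X'$ and $U \bar S \subseteq \bar X'$. The isoperimetric bound $|U\bar S| \ge |U| + u$ combined with $|U\bar S| \le |\bar X'| = t + 1$ forces $|U| \le t + 1 - u$. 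Combining this with Kneser's theorem applied to $\bar X'$ and $\bar S$ in $V$ (which gives $|\bar X'\bar S| \ge t + u + 1$ in the aperiodic case) sharpens the conclusion to $|\partial_\Psi(T)| \ge u+1$, so that $a$ is $(u+1)$-connected to $b$ in $\Psi$.

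By Menger's Theorem (Theorem~\ref{menger}) there exist $u+1$ openly disjoint $a$-to-$b$ paths in $\Psi$. From each path I extract the last vertex $\bar x_i$ belonging to $\bar X'$ together with its successor $\bar v_i$; this produces $u + 1$ pairs with distinct $\bar x_i \in \bar X'$, distinct $\bar v_i \in V \setminus \bar X'$, and $\bar v_i - \bar x_i \in \bar S \setminus \{0\}$. Setting $n_i \in [0,t]$ so that $\phi(X_{n_i}) = \bar x_i$ and picking any $y_i \in S$ with $\phi(y_i) = \bar v_i - \bar x_i$, one obtains $y_i \in S \setminus H$ and $\phi(X_{n_i} y_i) = \bar v_i$; hence $|\phi(X \cup X_{n_1}y_1 \cup \cdots \cup X_{n_{u+1}}y_{u+1})| = t + (u+1) = t + 1 + u$, with $r = u+1$.

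The hard part is the connectivity verification: since $\kappa_1(\Phi) = u$ exactly (witnessed by $\{0\}$), the extra unit of connectivity in $\Psi$ must come entirely from augmenting the source by $\{0\}$, and this requires a careful combination of the isoperimetric inequality with a Kneser-type bound (or an inspection of tight $1$-fragments) to rule out the extremal case $|U\bar S| = |U| + u$. The alternative case $0 \in \bar X$ and the periodicity issues in Kneser's theorem need separate handling, but in both situations the boost from the augmented source still produces the required $u+1$ disjoint edges.
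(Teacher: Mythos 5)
Your first half is exactly the paper's argument: pass to $\phi:G\to G/H$ and show $\kappa_1(\phi(S))\ge u$ by lifting a counterexample $\bar A$ to the union of cosets $A=\phi^{-1}(\bar A)$, invoking the isoperimetric inequality with $\kappa_2(S)=|HS|-|H|=u|H|$, and dividing by $|H|$. That part is correct. The gap is in the second half, where you try to upgrade the strong isoperimetric property to produce $u+1$ openly disjoint paths, i.e.\ $u+1$ distinct targets $\bar v_i$ lying outside $\bar X'=\phi(X)\cup\{0\}$. This is unattainable in general: the hypothesis $|G|-(t+1)|H|\ge u|H|$ only guarantees $|V\setminus \bar X'|\ge u$, and when equality holds there simply do not exist $u+1$ distinct cosets outside $\bar X'$. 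The proposed Kneser-type repair cannot close this, since even in the aperiodic case Kneser gives $|\bar X'\bar S|\ge (t+1)+(u+1)-1=t+u+1$, i.e.\ a boundary of size $u$, not $u+1$; and if $|V|=t+1+u$ the boundary is at most $u$ for trivial reasons. You flag this step yourself as "the hard part" needing "separate handling," and indeed it cannot be completed as stated.

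The resolution is that no boost is needed: the "$+1$" in $t+1+u$ comes from the source set, not from an extra edge. The index range $n_i\in[0,t]$ signals that $X_0=H$ is implicitly part of the decomposition, so the relevant source is $\bar X'=\phi(X\cup H)$ with $t+1$ elements, and one needs only $r=u$ edges from $\bar X'$ into $V\setminus\bar X'$. This is precisely Corollary \ref{gstrongip} applied with $k=u\le\kappa_1(\phi(S))$, whose hypothesis $|V|-|\bar X'|\ge u$ is exactly the assumption $|G|-(t+1)|H|\ge u|H|$; the resulting count is $|\bar X'|+u=t+1+u$. Your translation of the $u$ edges back to pairs $(n_i,y_i)$ with $y_i\in S\setminus H$ is fine and would complete the proof once the target is corrected from $u+1$ edges to $u$.
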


\begin{proof}

Let $\phi$ denotes the canonical morphism from $G$ onto $G/H$. Let us show that   $\kappa _1(\phi (S))\ge u$.

Let $Y\subset G/H$ be such that $Y+\phi (S)\neq G$. By the definition we \begin{eqnarray*}
|\phi^{-1}(Y)+S|&\ge &|\phi^{-1}(Y)|+\kappa _2(S)\\&=& |\phi^{-1}(Y)|+u|H|.
\end{eqnarray*}
Therefore $|Y+\phi(S)|\ge |Y|+u$.

The result follows
now by applying Corollary \ref{gstrongip} to $\phi(X)$ and $\phi(S)$.
\end{proof}

Proposition \ref{astrongip} follows easily by Hall marriage Lemma if $X=S$.

\section{Appendix: An isoperimetric proof of  Menger's Theorem}

We present here an isoperimetric proof of Menger's Theorem. Let $E
\subset V\times V$ and let $\Gamma=(V,E)$ be a reflexive graph.  Let $x,y$ be elements of $V$.
 The graph $\Gamma$ will be called {\em
$(x,y)$--$k$--critical} if $x$ is { $k$--connected} to
$y$ in $\Gamma$, and if this property is destroyed by the deletion
of every arc $(u,v)$ with $u\neq v$.

A subset $A$ with $x\in A$ and $y\notin \Gamma (A)$ and $|\partial
(A)|=k$ will be called a {\em $k$--part} with respect to
$(x,y,\Gamma)$.

 The reference to $(x,y)$ will be omitted.

\begin{lemma}\label{frxxy}
Assume that $\Gamma=(V,E)$ is {$k$--critical} and let  $(u,v)\in E$
be an arc with $u\neq v$. Then $\Gamma$ has $k$--part $F$ with $u\in
F$ and $v\in
\partial (F)$.
\end{lemma}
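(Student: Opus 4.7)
My plan is to exploit the $k$--criticality directly: deleting the arc $(u,v)$ produces a small separator in the reduced graph, and this separator, viewed back in $\Gamma$, will be the desired $k$--part. The starting point is to form $\Gamma'=(V,E\setminus\{(u,v)\})$. Since $\Gamma$ is $(x,y)$--$k$--critical, the $k$--connectivity of $x$ to $y$ fails in $\Gamma'$, so there exists a set $A\subseteq V$ with $x\in A$, $y\notin A\cup\Gamma'(A)$, and $|\partial_{\Gamma'}(A)|\le k-1$.

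The key step would be to compare $\Gamma(A)$ with $\Gamma'(A)$. Since the two graphs differ only in the arc $(u,v)$, one has $\Gamma(A)\setminus\Gamma'(A)\subseteq\{v\}$, with $v$ actually appearing in this set--difference iff $u\in A$ and $v\notin\Gamma(A\setminus\{u\})$. The $k$--connectivity of $\Gamma$ supplies $|\partial_{\Gamma}(A)|\ge k$ (provided $y\notin\Gamma(A)$, addressed below), which combined with $|\partial_{\Gamma'}(A)|\le k-1$ forces the inclusion to be strict. Hence $u\in A$, and reflexivity of $\Gamma$ together with $v\neq u$ gives $v\notin A\setminus\{u\}$ and so $v\notin A$. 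Consequently $v\in\partial_{\Gamma}(A)=\partial_{\Gamma'}(A)\cup\{v\}$ and $|\partial_{\Gamma}(A)|=|\partial_{\Gamma'}(A)|+1\le k$, which combined with the lower bound yields $|\partial_{\Gamma}(A)|=k$.

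Finally, I would set $F=A$. By construction $x\in F$, $u\in F$, $v\in\partial(F)$, and $|\partial(F)|=k$, so the only remaining condition for $F$ to be a $k$--part is $y\notin\Gamma(F)$. Since $y\notin\Gamma'(F)$ and $\Gamma(F)=\Gamma'(F)\cup\{v\}$, this is equivalent to $v\neq y$, which is the case that covers all applications of the lemma in the Menger proof (arcs with endpoint $y$ are either excluded from the critical setup or handled by applying the analogous argument to $\Gamma^{-1}$, where the role of $(u,v)$ and $(x,y)$ is reversed). The chief obstacle is really the bookkeeping of the second step: verifying that the deletion of a single arc can increase $|\partial|$ by at most one, and that this increase must be exactly the missing $v$ rather than some other element of $\partial_{\Gamma'}(A)$; everything else is elementary.
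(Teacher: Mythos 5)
Your argument is essentially the paper's own proof: delete the arc $(u,v)$ to form $\Psi$, use criticality to produce a set $F$ with $x\in F$, $y\notin\Psi(F)$ and $|\partial_{\Psi}(F)|<k$, observe that the boundary can only have dropped if $u\in F$ and $v\in\partial_{\Gamma}(F)\setminus\partial_{\Psi}(F)$, and then squeeze $|\partial_{\Gamma}(F)|$ between $|\partial_{\Psi}(F)|+1\le k$ and the lower bound $k$ coming from connectivity. Your explicit caveat that one needs $v\neq y$ for $F$ to be a genuine $k$--part is a point the paper silently passes over (its appeal to connectivity presupposes $y\notin\Gamma(F)$), and the hypothesis $\Gamma(x)\cap\Gamma^{-1}(y)=\emptyset$ in the lemma's only application does guarantee it, so if anything your write-up is the more careful of the two.
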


\begin{proof}

Consider the graph $\Psi=(V,E \setminus\{(u,v)\})$.
 There is  an $ F$ with $x\in F$ and $y\notin \Psi (F)$ such that $|\partial _{\Psi}(F)|<k$.
 This forces that  $u\in F$ and that $v \in \partial (F)$, since otherwise $\partial _{\Psi} (F)=\partial
_{\Gamma}(F)$.

Since $\partial _{\Psi} (F)\cup \{v\} \supset \partial
_{\Gamma}(F)$,
 we have  $|\partial _{\Gamma}(F)|\le k$. We must have
 $|\partial _{\Gamma }(F)|= k$, since $u$ is  $k$--connected to $v$
in $\Gamma$. This shows that $F$ is a $k$-part.\end{proof}

\begin{lemma} \label{dualxy}
Let $F$ be a $k$--part with respect to $(x,y,\Gamma)$. Then
$F^\curlywedge$ is a $k$--part a with respect to
$(y,x,\Gamma^{-1})$. Moreover $\partial _{-}  (F^\curlywedge)=
\partial (F).$

 In particular   $y$ is
 $k$--connected to $x$ in  $\Gamma^{-1}$,
  if $x$ is $k$--connected to $y$  in
$\Gamma$.
\end{lemma}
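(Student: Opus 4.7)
The plan is to verify the three requirements for $F^\curlywedge$ being a $k$--part with respect to $(y,x,\Gamma^{-1})$ — namely that it contains $y$, that $x$ lies outside its $\Gamma^{-1}$-image, and that its negative boundary has size exactly $k$ — then to record the equality $\partial_{-}(F^\curlywedge) = \partial(F)$ which comes out of the argument, and finally to transfer the $k$--connectedness hypothesis from $\Gamma$ to $\Gamma^{-1}$ by a symmetric dualization.

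First I would dispatch the two membership conditions by unpacking definitions with reflexivity. Since $y \notin \Gamma(F)$, we get $y \in F^\curlywedge$; and any arc $(x,w)$ with $w \in F^\curlywedge$ would force $w \in \Gamma(x) \subset \Gamma(F)$, contradicting $w \notin \Gamma(F)$, so $x \notin \Gamma^{-1}(F^\curlywedge)$. The inclusion $\partial_{-}(F^\curlywedge) \subset \partial(F)$ is then a one-line arc-chase: if $z \in \Gamma^{-1}(F^\curlywedge)\setminus F^\curlywedge$, some arc $(z,w)$ with $w\in F^\curlywedge$ forbids $z\in F$, while $z \notin F^\curlywedge$ places $z$ in $\Gamma(F)$, hence in $\partial(F)$.

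The delicate step, and the one I expect to be the main obstacle, is the reverse inclusion (which simultaneously yields $|\partial_{-}(F^\curlywedge)| = k$). My plan is to argue by contradiction: suppose $z \in \partial(F)\setminus \partial_{-}(F^\curlywedge)$. Since $z \in \Gamma(F)$ rules out $z \in F^\curlywedge$, the failure to lie in $\partial_{-}(F^\curlywedge)$ means $\Gamma(z) \cap F^\curlywedge = \emptyset$, i.e.\ $\Gamma(z) \subset \Gamma(F)$. Setting $F' = F \cup \{z\}$, one then checks $\Gamma(F') = \Gamma(F)$, so $x \in F'$, $y \notin \Gamma(F')$ (this is where $y \in F^\curlywedge$ is used), and $|\partial(F')| = k-1$, contradicting the $k$--connectedness of $x$ to $y$ in $\Gamma$. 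This is essentially the same augmentation trick used in Lemma \ref{negative}; the subtle part is confirming that $F'$ remains a legitimate competitor in the definition of $k$--connectedness.

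For the ``in particular'' clause, I would dualize candidates for $k$--connectedness in $\Gamma^{-1}$: given any $B$ with $y \in B$ and $x \notin \Gamma^{-1}(B)$, set $A = V\setminus \Gamma^{-1}(B)$. Then $x \in A$, while $y \in B \subset \Gamma^{-1}(B)$ gives $y \notin A$, and any arc $(w,y)$ with $w \in A$ would place $w$ in $\Gamma^{-1}(B)$, contradicting $w \in A$; hence $y \notin \Gamma(A)$. A final arc-chase shows $\partial(A) \subset \partial_{-}(B)$: for $z \in \partial(A)$, take $w\in A$ with $z \in \Gamma(w)$; then $z \notin A$ gives $z \in \Gamma^{-1}(B)$, while $z \in B$ would force $w \in \Gamma^{-1}(B)$, impossible. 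The $k$--connectedness of $x$ to $y$ in $\Gamma$ then yields $|\partial_{-}(B)| \geq |\partial(A)| \geq k$, as required.
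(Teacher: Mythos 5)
Your proof is correct and follows essentially the same route as the paper: the easy inclusion $\partial_{-}(F^\curlywedge)\subset\partial(F)$ followed by an augmentation argument that invokes the $k$--connectedness of $x$ to $y$ to rule out any $z\in\partial(F)\setminus\partial_{-}(F^\curlywedge)$ (the paper adjoins the whole set $C=\partial(F)\setminus\partial_{-}(F^\curlywedge)$ at once rather than one element at a time). You also spell out the membership checks and the dualization needed for the ``in particular'' clause, which the paper's terse proof leaves implicit.
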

\begin{proof}

We have clearly
 $\partial _{-}  (F^\curlywedge)\subset
\partial (F).$ Put
 $C=\partial (F)\setminus \partial _{-}  (F^\curlywedge).$

 Since $y\notin \Gamma (F\cup C)$,  we have $k\le |\partial (F\cup
C)|\le |\partial _{-} (F^\curlywedge)|\le |\partial (F)|=k$.

\end{proof}

The above lemma is a local version of the isoperimetric duality given in {Lemma} \ref{negative}.

\begin{lemma} \label{frsxy}
Assume that $\Gamma=(V,E)$ is {$k$--critical} and that $\Gamma
(x)\cap \Gamma ^{-1}(y)=\emptyset$. There is a $k$--part $F$ of
$\Gamma$ such that $\min (|F|,|F^\curlywedge|)\ge 2.$
\end{lemma}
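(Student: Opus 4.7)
The natural approach is proof by contradiction: suppose that every $k$-part $F$ of $\Gamma$ satisfies $|F|=1$ (so $F=\{x\}$) or $|F^\curlywedge|=1$ (so $F^\curlywedge=\{y\}$). My plan is to use this assumption to pin down the structure of $\Gamma$ so tightly that it violates $k$-connectivity itself.

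The first step is to show that the collection of $k$-parts forms a lattice. For any $k$-parts $F_{1},F_{2}$, both $F_{1}\cup F_{2}$ and $F_{1}\cap F_{2}$ contain $x$ and have $\Gamma$-image avoiding $y$, so both are valid $(x,y)$-sets. The submodularity inequality of Lemma~\ref{partialsub} gives $|\partial(F_{1}\cup F_{2})|+|\partial(F_{1}\cap F_{2})|\le 2k$, while the $k$-connectivity of $x$ to $y$ forces each term on the left to be at least $k$. Hence both equal $k$, so both are $k$-parts. Consequently the $k$-parts form a finite lattice; let $F^{\ast}$ denote its unique maximum.

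The second step uses Lemma~\ref{frxxy} to rigidify the non-loop arcs of $\Gamma$. For any non-loop arc $(u,v)$ with $v\neq y$, the lemma produces a $k$-part $F$ with $u\in F$; by maximality $F\subseteq F^{\ast}$, so $u\in F^{\ast}$. Equivalently, for each $u\in V\setminus F^{\ast}$ every non-loop out-arc of $u$ targets $y$. Applying the dual statement to $\Gamma^{-1}$ (which is $(y,x)$-$k$-critical) gives the symmetric constraint on targets of non-loop arcs with source other than $x$. Combining these constraints with the hypothesis $\Gamma(x)\cap\Gamma^{-1}(y)=\emptyset$ — which forbids any vertex from being both directly reachable from $x$ and directly reaching $y$ — I can rule out the remaining possibility of an arc $(v,y)$ from some $v\in\partial F^{\ast}$, and so conclude that $\Gamma(v)=\{v\}$ for every $v\in\partial F^{\ast}$.

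The contradiction is then immediate. Since $|\partial F^{\ast}|=k\ge 1$, pick $v\in\partial F^{\ast}$ and set $A=F^{\ast}\cup\{v\}$. Then $x\in A$, and $y\notin A$ because $y\in F^{\ast\curlywedge}$ is disjoint from $F^{\ast}\cup\partial F^{\ast}$. Moreover $\Gamma(A)=\Gamma(F^{\ast})\cup\Gamma(v)=\Gamma(F^{\ast})$, so $y\notin\Gamma(A)$ and $A$ is a valid $(x,y)$-set. But $\partial A=\Gamma(F^{\ast})\setminus(F^{\ast}\cup\{v\})=\partial F^{\ast}\setminus\{v\}$ has only $k-1$ elements, contradicting the $k$-connectivity of $x$ to $y$. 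The main obstacle is the middle step: ruling out arcs from $\partial F^{\ast}$ into $y$ requires combining Lemma~\ref{frxxy} applied both in $\Gamma$ and in $\Gamma^{-1}$ with the disjointness hypothesis, and it is precisely here that the condition $\Gamma(x)\cap\Gamma^{-1}(y)=\emptyset$ is used in its full strength.
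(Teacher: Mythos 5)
Your opening moves are sound and genuinely different from the paper's: the submodularity inequality of Lemma \ref{partialsub} does show that the $k$--parts are closed under union and intersection, so a maximal $k$--part $F^{\ast}$ exists, and Lemma \ref{frxxy} does force every vertex carrying a non-loop out-arc whose target is not $y$ to lie in $F^{\ast}$. (The paper proceeds quite differently: it takes a path of minimal length from $x$ to $y$ and applies Lemma \ref{frxxy} to its second arc.)

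The middle step, however --- ruling out arcs $(v,y)$ with $v\in\partial(F^{\ast})$ --- is a genuine gap, and it cannot be filled by the means you describe. Under your contradiction hypothesis every $k$--part equals $\{x\}$ or equals $P:=V\setminus\Gamma^{-1}(y)$ (the latter being $\{y\}^{\curlyvee}$, by Lemma \ref{dualxy}). If $P$ is itself a $k$--part, then $F^{\ast}=P$ and $\partial(F^{\ast})=\partial^{-}(\{y\})=\Gamma^{-1}(y)\setminus\{y\}$, so \emph{every} vertex of $\partial(F^{\ast})$ has an arc into $y$ by definition, and your claimed conclusion $\Gamma(v)=\{v\}$ fails for all of them. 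The hypothesis $\Gamma(x)\cap\Gamma^{-1}(y)=\emptyset$ cannot be brought to bear here: to place $v$ in $\Gamma(x)$ you would need the dual constraint ``all non-loop in-arcs of $v$ come from $x$,'' but that constraint only applies to vertices outside the maximal dual $k$--part, which is $V\setminus\Gamma(x)$, and $v\in\Gamma^{-1}(y)\subset V\setminus\Gamma(x)$ lies inside it. Concretely, the reflexive path $x\to a\to b\to y$ with $k=1$ satisfies every constraint you have derived: it is $1$--critical, $\Gamma(x)\cap\Gamma^{-1}(y)=\emptyset$, its only $1$--parts are $\{x\}$ and $\{x,a\}=P$, and $b\in\partial(P)$ carries the arc $(b,y)$. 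Hence no argument assembled solely from your lattice and arc constraints can reach the desired contradiction. (Indeed this graph is a counterexample to the statement itself when $k=1$; for $k\ge 2$ the troublesome configuration can be eliminated, but only by a further argument --- e.g.\ showing that every $b\in\partial(P)$ would need at least two in-neighbours in $P$ and then exhibiting an arc whose deletion preserves $k$--connectivity, contradicting criticality --- and nothing of that sort appears in your sketch.)
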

\begin{proof}

Take a path  $[x, a,b, \cdots,c,y]$ of minimal length from $x$ to
$y$. By Lemma \ref{frxxy}, there is a $k$-part $F$, with $a\in F$
and $b\in
\partial (F)$. We have $\{x,a\}\subset F$. We have
$|F^\curlywedge|\ge 2$ since otherwise $F^\curlywedge=\{y\}$. Hence
by Lemma \ref{dualxy}, $b \in \partial (F)=\partial ^-(\{y\})$.
Therefore $b\in \Gamma (x)\cap \Gamma ^{-1}(y),$ a contradiction.

\end{proof}

Let $x$ be an element of $V$ and let $T=\{y_1,\cdots ,y_k\}$ be a
subset of $V\setminus \{v\}$. A family of $k$--openly disjoint paths
$P_1, \cdots , P_k$, where $P_i$ is a path  from $x$ to $y_i$ will
be called an {\em $(x,T)$--fan}.

\begin{proofof}{Theorem}{\ref{menger}}

 The proof is by induction, the result being obvious for
$|V|$ small. Assume first that there $z\in \Gamma (x)\cap \Gamma
^{-1}(y)$. Consider the restriction $\Psi$ of $\Gamma$ to
$V\setminus \{z\}$.  Clearly $x$ is {$(k-1)$--connected} to  $y$
in $\Psi$. By the induction hypothesis  there are $(k-1)$--openly
disjoint paths from $x$ to $y$ in $\Psi$. We adjoin the path
$[x,z,y]$ to these paths and we are done. So we may assume that
$\Gamma (x)\cap \Gamma ^{-1}(y)=\emptyset$.

By Lemma \ref{frsxy} there is a part $F$ with  $\min
(|F|,|F^\curlywedge|)\ge 2.$ Consider the reflexive graph
$\Theta=(V',E')$ obtained by contracting $F^\curlywedge$ to a single
vertex $y_0$. We have $V'=(V\setminus F^\curlywedge)\cup \{y_0\}$.
Since $|V'|<|V|$, by the induction hypothesis there are $k$ openly
disjoint paths form $x$ yo $y_0$. By deleting $y_0$ we obtain an
$(x, \partial (F))$--fan. Similarly by contracting $F$ and applying
induction, we form a $( \partial (F),y)$--fan.

By composing these two fans, we form  $k$ openly disjoint paths from
$x$ to $y$.
\end{proofof}


{\bf Acknowledgement}

This paper was revised while the author is visiting Centre de Recerca Matem\`atica, Barcelona, Catalonia, Spain.
The author would like to thank this Institute and Professor Oriol Serra for their hospitality.
Many thanks also for an anonymous referee for many valuable comments.

\end{document}